\crefname{equation}{}{}
\crefname{lem}{Lemma}{Lemmas}
\crefname{thm}{Theorem}{Theorems}
\newcommand{\dd}{\,{\rm d}}
\newcommand{\R}{\,{\mathbb R}}
\newcommand{\dual}[1]{\left\langle {#1} \right\rangle}
\newcommand{\prox}[0]{ {\bf prox}}
\newcommand{\argmin}[0]{ {\mathop{{\rm  argmin}}\,}}
\newcommand{\st}[0]{ {{\rm  s.t.}\,}}
\newcommand{\nm}[1]{\left\lVert {#1} \right\rVert}
\newcommand{\snm}[1]{\left\lvert {#1} \right\rvert}
\newcommand{\barlk}[0]{\bar{\lambda}_{k+1}}
\newcommand{\ssnm}[1]
{
	\left\vert\kern-0.25ex
	\left\vert\kern-0.25ex
	\left\vert
	{#1}
	\right\vert\kern-0.25ex
	\right\vert\kern-0.25ex
	\right\vert
}
\def\spher@harm#1{%
	\vbox{\hbox{%
			\offinterlineskip
			\valign{&\hb@xt@2\p@{\hss$##$\hss}\vskip.2ex\cr#1\crcr}%
		}\vskip-.36ex}%
}
\def\gshone{\spher@harm{.}}
\def\gshtwo{\spher@harm{.&.}}
\def\gshthree{\spher@harm{.&.&.}}
\let\gsh\spher@harm
\newtheorem{coro}{Corollary}[section]
\newtheorem{assum}{Assumption}
\newtheorem{lem}{Lemma}[section]
\newtheorem{rem}{Remark}[section]
\newtheorem{thm}{Theorem}[section]
\newcolumntype{I}{!{\vrule width 1,5pt}}
\newlength\savedwidth
\newlength\savewidth
\newcounter{mnote}
\let\oldmarginpar\marginpar
\renewcommand\marginpar[1]
\def\@captype{table}\makeatother
\begin{document}
	\title{
		\Large \bf A unified differential equation solver approach for separable convex optimization: splitting, acceleration and nonergodic rate 	\thanks{This work was supported by the Foundation of Chongqing Normal University (No. 202210000161).}
	}
	
	\author[,a,b]{Hao Luo\thanks{Email: luohao@cqnu.edu.cn}}
		\author[,c]{Zihang Zhang\thanks{Email: zhang-zihang@pku.edu.cn}}
	\affil[a]{National Center for Applied Mathematics in Chongqing, Chongqing Normal University, Chongqing, 401331, People’s Republic of China}
	\affil[b]{Chongqing Research Institute of Big Data, Peking University,  Chongqing, 401121, People’s Republic of China}
		\affil[c]{School of Mathematical Sciences, Peking University, Beijing, 100871, People’s Republic of China}

%
	
	\date{}
		\maketitle

\begin{abstract}
	This paper provides a self-contained ordinary differential equation solver approach for separable convex optimization problems. A novel primal-dual dynamical system with built-in time rescaling factors is introduced, and the exponential decay of a tailored Lyapunov function is established. Then several time discretizations of the continuous model are considered and analyzed via a unified discrete Lyapunov function. Moreover, two families of accelerated proximal alternating direction methods of multipliers are obtained, and nonergodic optimal mixed-type convergence rates shall be proved for the primal objective residual, the feasibility violation and the Lagrangian gap.  Finally, numerical experiments are provided to validate the practical performances.
\end{abstract}
	\medskip\noindent{\bf Keywords:} 
	Separable convex optimization, linear constraint, dynamical system, exponential decay, primal-dual method, acceleration, splitting, linearization, nonergodic rate



\section{Introduction}
\label{sec:intro}
Consider the separable convex optimization problem:
\begin{equation}\label{eq:min-f-g-A-B}
	\min_{x\in\mathcal X,y\in\mathcal Y} F(x,y):=f(x)+g(y)\quad \st~ Ax+By = b,
\end{equation}
where $\mathcal X\subset\R^m$ and $\mathcal Y\subset \R^n$ are two closed convex sets, $A\in\R^{r\times m}$ and $B\in \R^{r\times n}$ are linear operators, $b\in\R^{r}$ is a given vector, and $f:\R^m\to \R\cup\{+\infty\}$ and $g:\R^n\to \R\cup\{+\infty\}$ are two properly closed convex functions. We are mainly interested in first-order primal-dual methods for \cref{eq:min-f-g-A-B} based on the Lagrange function
\begin{equation}\label{eq:L}
	\mathcal L(x,y,\lambda): = F(x,y)+\delta_{\mathcal X\times \mathcal Y}(x,y)+\dual{\lambda,Ax+By-b},
\end{equation}
where $(x,y,\lambda)\in \R^m\times\R^n\times \R^r$ and $\delta_{\mathcal X\times \mathcal Y}$ denotes the indicator function of $\mathcal X\times \mathcal Y$. Throughout, assume $(x^*,y^*,\lambda^*)\in\mathcal X\times \mathcal Y\times \R^r$ is a saddle-point of $	\mathcal L$, which means
\[
\mathcal L(x^*,y^*,\lambda)\leq 	\mathcal L(x^*,y^*,\lambda^*)\leq 	\mathcal L(x,y,\lambda^*)\quad \forall\,(x,y,\lambda)\in \R^m\times\R^n\times \R^r.
\]
Then $(x^*,y^*)$ is a solution to \cref{eq:min-f-g-A-B} and for simplicity we set $F^*=F(x^*,y^*)$.

In this work, we propose new accelerated primal-dual splitting methods for the separable optimization problem \cref{eq:min-f-g-A-B} via a unified differential equation solver approach. To be more specific, we shall first introduce a novel continuous dynamical system
\begin{equation}		\label{eq:2bapd-sys-intro}
	\left\{
	\begin{aligned}
		x' = {}&v-x,\\
		\gamma v' \in {}&\mu_f(x-v)-\partial_x \mathcal L(x,y,\lambda),\\
		\theta\lambda ' = {}&\nabla_\lambda \mathcal L(v,w,\lambda),\\
		\beta w' \in {}&\mu_g(y-w)-\partial_y \mathcal L(x,y,\lambda),\\
		y' = {}&w-y,
	\end{aligned}
	\right.
\end{equation}
where $\partial_\times\mathcal L$ means the subdifferential (cf.\cref{eq:partial-L}) with respect to $\times = x$ or $y$, and $\mu_f,\,\mu_g\geq  0$ correspond to the strong convexity parameters of $f$ and $g$. Moreover, $\gamma,\,\beta$ and $\theta$ are three time scaling factors and governed by $	\gamma' = \mu_f-\gamma,\, \beta' = \mu_g-\beta$ and $	\theta' = -\theta$, respectively.
We equip \cref{eq:2bapd-sys-intro} with a tailored Lyapunov function
\begin{equation}
	\label{eq:Et-intro}
	\begin{split}
		\mathcal E(t)={}& 
		\mathcal L (x,y,\lambda^*)-\mathcal L (x^*,y^*,\lambda)+ \frac{\gamma}{2}\nm{v-x^*}^2 
		+\frac{\beta}{2}\nm{w-y^*}^2+
		\frac{\theta}{2}\nm{\lambda-\lambda^*}^2,
	\end{split}
\end{equation}
and establish the exponential decay $\mathcal E(t)\leq \mathcal E(0) e^{-t}$ under the assumption that both $f$ and $g$ have Lipschitzian gradients. Note that  our previous accelerated primal-dual flow model in \cite[Section 2]{luo_acc_primal-dual_2021} can also be applied to the separable case \cref{eq:min-f-g-A-B} but it treats $(x,y)$ as an entire variable and only involves a single time scaling parameter for $F$. However, the current one \cref{eq:2bapd-sys-intro} adopts different scaling factors $\gamma$ and $\beta$ respectively for $f$ and $g$. This not only allows us to handle the partially strongly convex case $\mu_f+\mu_g>0$ but also paves the way for designing new primal-dual splitting algorithms.

Indeed, based on proper numerical discretizations of the continuous model \cref{eq:2bapd-sys-intro}, we propose several families of accelerated primal-dual splitting methods for the original optimization problem \cref{eq:min-f-g-A-B}. Using a discrete analogue of \cref{eq:Et-intro}, we establish the corresponding {\it nonergodic, mixed-type} and {\it optimal} convergence rates for the quantity
\begin{equation}\label{eq:L-F-AB}
	\mathcal L (x_k,y_k,\lambda^*)-\mathcal L (x^*,y^*,\lambda_k)+\snm{F(x_k,y_k)-F^*}+\nm{Ax_k+By_k-b}.
\end{equation}
Here, we note that (i) ``nonergodic" means the estimate is proved for the last iterate $(x_k,y_k,\lambda_k)$ instead of its historic average (cf.\cref{eq:ergodic}); (ii) ``mixed-type" says the decay rate provides explicit dependence on $\nm{A},\,\nm{B},\mu_f,\,\mu_g$ and the Lipschitz constant of $\nabla f$ (and/or $\nabla g$) (cf.\cref{eq:mixed}); (iii) by ``optimal'' we mean the iteration complexities achieve the lower bounds of first-order primal-dual methods for problem \cref{eq:min-f-g-A-B}; see \cite{Li2019,ouyang_lower_2021,woodworth_tight_2016,zhang_lower_2022}.

\subsection{Outline}
The rest of this paper is organized as follows. In the introduction part, we shall complete the literature review of existing methods for \cref{eq:min-f-g-A-B}. Then in \cref{sec:2bapd-flow}, we introduce our continuous model and establish the exponential decay of the Lyapunov function \cref{eq:L} under the smooth setting. After that, we propose two classes of methods in \cref{sec:2bapd-im-x-im-y,sec:2bapd-ex-x-im-y}, respectively, and prove nonergodic mixed-type convergence rates via a unified discrete Lyapunov function. Finally, we provide several numerical experiments in \cref{sec:num} and 
give some concluding remarks and discussions  in \cref{sec:conclu}.
\subsection{Brief review of the one block case}
Let us start with one-block setting:
\begin{equation}\label{eq:min-f-A}
	\min_{x\in\mathcal X} f(x)\quad \st~Ax = b.
\end{equation}
The 
augmented Lagrangian method (ALM) reads as \cite{rockafellar_augmented_1976} 
\begin{equation}\label{eq:ALM}
	x_{k+1}= {}\mathop{\argmin}\limits_{x\in\mathcal X}\left\{\mathcal L(x,\lambda_k)+\frac{\sigma}{2}\nm{Ax-b}^2\right\},\quad
	\lambda_{k+1} = {}\lambda_{k}+\sigma(Ax_{k+1}-b),
\end{equation}
where $\sigma>0$ denotes the penalty parameter and $\mathcal L(x,\lambda) $ is defined by \cref{eq:L} without $y,g$ and $B$.
Combining Nesterov's extrapolation technique \cite{Nesterov1983,tseng_on_accelerated_Seattle_2008}, 
ALM can be further accelerated, and faster rate $O(1/k^2)$ for the dual objective residual has been proved in \cite{He_Yuan2010,huang_accelerated_2013,kang_inexact_2015,kang_accelerated_2013,tao_accelerated_2016}. 
The accelerated linearized ALM in \cite{Xu2017} and some quadratic penalty methods \cite{li_convergence_2017,tran-dinh_non-stationary_2020} can achieve the nonergodic rates $O(1/k)$ and $O(1/k^2)$ respectively for $\mu_f=0$ and $\mu_f>0$, in terms of the primal objective residual $\snm{f(x_k)-f(x^*)}$ and the feasibility violation $\nm{Ax_k-b}$. Moreover, primal-dual methods in \cite{chen_optimal_2014,luo_universal_2022,nesterov_smooth_2005,xu_iteration_2021} possess optimal mixed-type convergence rates.
\subsection{State-of-the-art methods for two-block case}
When applied to \cref{eq:min-f-g-A-B}, the classical ALM \cref{eq:ALM} has to minimize the augmented Lagrangian
\[
\mathcal L_\sigma(x,y,\lambda): = 	\mathcal L(x,y,\lambda)+\frac{\sigma}{2}\nm{Ax+By-b}^2,
\]
which is not separable for any $\sigma>0$. Hence, the original ALM \cref{eq:ALM}
is further relaxed as the alternating direction method of multipliers (ADMM) \cite{gabay_dual_1976}
\begin{subnumcases}{\label{eq:ADMM}}
	\label{eq:ADMM-x}
	x_{k+1} = {}\mathop{\argmin}\limits_{x\in\mathcal X}\mathcal L_\sigma(x,y_{k},\lambda_{k}),\\
	\label{eq:ADMM-y}
	y_{k+1} = {}\mathop{\argmin}\limits_{y\in\mathcal Y}\mathcal L_\sigma(x_{k+1},y,\lambda_{k}),\\	
	\label{eq:ADMM-l}
	\lambda_{k+1} = {}\lambda_{k}+\sigma(Ax_{k+1}+By_{k+1}-b),
\end{subnumcases}
which minimizes $\mathcal L_\sigma(\cdot,\cdot,\lambda)$ with respect to $x$ and $y$ successively (like the Gauss-Seidel iteration). 

So far, there are vast variants of ADMM, with proximal preconditioning \cite{fazel_hankel_2013,He_Yuan2012}, symmetrization \cite{he_strictly_2014,li_proximal_2015}, over-relaxation  \cite{eckstein_augmented_2012,eckstein_douglas-rachford_1992} and  parallelization \cite{chen_proximal-based_1994,han_augmented_2014}. As showed in \cite{eckstein_augmented_2012,gabay_chapter_1983}, the Douglas--Rachford splitting \cite{douglas_numerical_1956} and the Peaceman--Rachford splitting \cite{peaceman_numerical_1955} lead to equivalent forms of ADMM for solving the dual problem of \cref{eq:min-f-g-A-B}. The primal-dual hybrid gradient framework \cite{chambolle_first-order_2011,chambolle_ergodic_2016,esser_general_2010,he_convergence_2014,zhu_unified_2022} for bilinear saddle-point problems provides linearized versions of ADMM. Besides, accelerated ADMM with extrapolation can be found in \cite{goldfarb_fast_2013,goldstein_fast_2014,ouyang_accelerated_2015}.


The convergence rates $O(1/k)$ and $O(1/k^2)$ of ADMM and its variants  have been proved in \cite{davis_convergence_2016,goldstein_fast_2014,He_Yuan2012,monteiro_iteration-complexity_2013,shefi_rate_2014,tian_alternating_2018,Xu2017}. In \cite{ouyang_accelerated_2015}, Ouyang et al. proposed an accelerated linearized ADMM and established the mixed-type convergence rate 
\begin{equation}\label{eq:mixed}
	O\left(\frac{L_f}{k^2} + \frac{\nm{A}}{k}\right),
\end{equation}
where $L_f$ denotes the Lipschitz constant of $\nabla f$. Although this yields the final $  O(1/k)$ rate, it does make sense because the dependence on $L_f$ and $\nm{A}$ is optimal \cite{ouyang_lower_2021}. However, we mention that most existing works provide only ergodic convergence rates.
In other words, the error is not measured at the last iterate $X_k=(x_k,y_k,\lambda_k)$ but its average $\widetilde X_k=(\widetilde x_k,\widetilde y_k,\widetilde \lambda_k)$ (cf. \cite[Definition 1]{Li2019}): 
\begin{equation}\label{eq:ergodic}
	\widetilde{X}_k = \sum_{i=1}^{k}a_iX_i,\quad \text{with } \sum_{i=1}^{k}a_i = 1,\quad a_i>0.
\end{equation}
As mentioned in \cite{Li2019,tran-dinh_augmented_2018}, this might violate some key properties such as sparsity and low-rankness.  To achieve nonergodic rates for the primal objective residual $\snm{F(x_k,y_k)-F^*}$ and the feasibility violation $\nm{Ax_k+By_k-b}$, Li and Lin \cite{Li2019} and Tran-Dinh et al. \cite{tran-dinh_proximal_2019,tran-dinh_augmented_2018,tran-dinh_non-stationary_2020,tran-dinh_smooth_2018} proposed new accelerated ADMM. It should be noticed that, in each iteration, the methods of Tran-Dinh et al. require {\it one more} proximal calculation than ADMM, and the final rates become ergodic if the extra proximal step is replaced by averaging.

Recently, we were aware of the works of Sabach and Teboulle \cite{sabach_faster_2022} and Zhang et al. \cite{zhang_faster_2022}. Both two proposed accelerated ADMM, and their ingredients are the so-called  primal algorithmic map and the prediction-correction framework \cite{He_Yuan2012}, which are different from our differential equation solver approach. They also established nonergodic rates $O(1/k)$ and $O(1/k^2)$ respectively for convex and (partially) strongly convex objectives, but have not derived delicate mixed-type estimates. 

\subsection{Dynamical system approach}
As we can see, continuous dynamical system approaches \cite{attouch_fast_2018,attouch_rate_2019,chen_revisiting_2022,chen_first_2019,chen_luo_unified_2021,jordan_dynamical_2019,lin_control-theoretic_2021,luo_accelerated_2021,luo_differential_2021,shi_understanding_2021,su_dierential_2016,wang_search_2021,wibisono_variational_2016,wilson_lyapunov_2021} for the unconstrained convex optimization have been extended to linearly constrained problems. Zeng et al. \cite{zeng_dynamical_2019} generalized the continuous-time model of Nesterov accelerated gradient method \cite{Nesterov1983} derived by Su et al. \cite{su_dierential_2016} to the one block case \cref{eq:min-f-A}, and established the decay rate $O(1/t^2)$ via a new Lyapunov function. Further extensions with Bregman divergence and perturbation are given in \cite{he_perturbed_2021,zhao_accelerated_2022}. Based on time discretizations of proper continuous models, Luo \cite{luo_acc_primal-dual_2021,luo_primal-dual_2022,luo_universal_2022}, He et al. \cite{HE2022110547,he_inertial_2022} and Boţ et al. \cite{bot_fast_2022} proposed the corresponding accelerated ALM with nonergodic rate $O(1/k^2)$, and  Chen and Wei \cite{chen_transformed_2023} obtained linear convergence without strong convexity assumption.


Continuous dynamical systems for the separable problem \cref{eq:min-f-g-A-B} can be found in \cite{attouch_fast_2021,bitterlich_dynamical_2021,bot_primal-dual_2020,franca_nonsmooth_2021,he_convergence_rate_2021}, and for general saddle-point systems, we refer to  \cite{cherukuri_saddle-point_2017,lu_osr-resolution_2021,schropp_dynamical_2000}. However, it is rare to see new primal-dual splitting algorithms with provable nonergodic convergence rates based on dynamical models.

\section{Continuous Dynamical Systems}
\label{sec:2bapd-flow}
\subsection{Preliminaries}
Let $\dual{\cdot,\cdot}$ and $\nm{\cdot}$ be the usual inner product and the Euclidean norm, respectively.
For any properly closed convex function $f$ on $\mathcal X$, we say $f\in\mathcal S_{\mu}^0(\mathcal X)$ with  $\mu\geq  0$ if
\begin{equation}
	\label{eq:def-mu}
	f(x_1)-
	f(x_2)
	-\dual{p,x_1-x_2}
	\geq  {}
	\frac{\mu}{2}\nm{x_1-x_2}^2
	\quad\forall\,(x_1,x_2)\in\mathcal X\times \mathcal X,
\end{equation}
where $p\in\partial f(x_2)$ with $\partial f(x_2)$ denoting the subdifferential of $f$ at $x_2\in\mathcal X$. We write $f\in\mathcal S_{\mu,L}^{1,1}(\mathcal X)$ if $f\in\mathcal S_{\mu}^0(\mathcal X)$ has $L$-Lipschitz continuous gradient:
\begin{equation}
	\label{eq:def-L}
	f(x_1)-
	f(x_2)
	-\dual{\nabla f(x_2),x_1-x_2}
	\leq  {}
	\frac{L}{2}\nm{x_1-x_2}^2
	\quad\forall\,(x_1,x_2)\in\mathcal X\times \mathcal X.
\end{equation}
The function classes $\mathcal S_{\mu}^0(\mathcal Y)$ and $\mathcal S_{\mu,L}^{1,1}(\mathcal Y)$ are defined analogously. When $\mathcal X$($\mathcal Y$) becomes the entire space $\R^m$($\R^n$), it shall be omitted for simplicity.

Clearly, if $f\in\mathcal S_{\mu_f}^0(\mathcal X)$ and $g\in\mathcal S_{\mu_g}^0(\mathcal Y)$ with $\mu_f,\,\mu_g\geq  0$, then for all $(x_1,y_1,\lambda)$ and $(x_2,y_2,\lambda)\in\mathcal X\times \mathcal Y\times \R^r $, we have 
\begin{equation}\label{eq:ineq-mu}
	\frac{\mu_f}{2}\nm{x_1-x_2}^2
	+\frac{\mu_g}{2}\nm{y_1-y_2}^2		
	\leq   {}\mathcal L(x_1,y_1,\lambda)-\mathcal L (x_2,y_2,\lambda)-\dual{p,x_1-x_2}	-\dual{q,y_1-y_2},
\end{equation}
where $p\in \partial_x\mathcal L (x_2,y_2,\lambda)$ 
and $q\in\partial_y\mathcal L (x_2,y_2,\lambda)$. 
Above and in what follows, we set
\begin{equation}\label{eq:partial-L}
	\partial_x\mathcal L (x,y,\lambda) :={} \partial f(x)+A^{\top}\lambda+N_{\mathcal X}(x),\quad 
	\partial_y\mathcal L (x,y,\lambda) :={} \partial g(y)+B^{\top}\lambda+N_{\mathcal Y}(y),
\end{equation}
with $N_{\mathcal X}(x)$ and $N_{\mathcal Y}(y)$ being the norm cone of $\mathcal X$ and $\mathcal Y$ at $x$ and $y$, respectively. 

We also introduce the notation $M\lesssim N$, which means $M\leq CN$ with some generic bounded constant $C>0$ that is independent of $A,B,\mu_f,\mu_g,\gamma_0$ and $\beta_0$ (the initial conditions to \cref{eq:2bapd-scaling}) but can be different in each occurrence. 
\subsection{Continuous-time model and exponential decay}
Motivated by the accelerated primal-dual flow \cite{luo_acc_primal-dual_2021}, we consider the following differential inclusion
\begin{equation}\label{eq:2bapd-inclu}
	\left\{
	\begin{aligned}
		0&{}\in\gamma x'' + (\gamma+\mu_f)x' +\partial_x \mathcal L(x,y,\lambda),\\
		0&{}=\theta\lambda ' -\nabla_\lambda \mathcal L(x+x',y+y',\lambda),\\
		0&{}\in	\beta y'' + (\beta+\mu_g)y' +\partial_y \mathcal L(x,y,\lambda),
	\end{aligned}
	\right.
\end{equation}
where the parameters $(\theta,\gamma,\beta)$ are governed by 
\begin{equation}	\label{eq:2bapd-scaling}
	\theta'=-\theta,\quad
	\gamma' = \mu_f-\gamma,\quad
	\beta'=\mu_g-\beta,
\end{equation}
with positive initial conditions: $\theta(0) = 1,\,\gamma(0)=\gamma_0>0$ and $\beta(0)=\beta_0>0$. Here, the new model \cref{eq:2bapd-inclu} utilizes the separable structure of \cref{eq:min-f-g-A-B} and adopts different rescaling factors for $x$ and $y$, respectively. 

It is not hard to obtain the exact solution of \cref{eq:2bapd-scaling}:
\begin{equation*}
	\theta(t) = {}e^{-t},\quad
	\gamma(t) = {}\gamma_0e^{-t}+\mu_f(1-e^{-t}),\quad
	\beta(t) = {}\beta_0e^{-t}+\mu_g(1-e^{-t}).	
\end{equation*}
Besides, as introduced in \cref{eq:2bapd-sys-intro}, an alternative presentation of \cref{eq:2bapd-inclu} reads as 
\begin{subnumcases}{	\label{eq:2bapd}}
	\label{eq:2bapd-x}
	x' = {}v-x,\\
	\label{eq:2bapd-v}
	\gamma v' \in {}\mu_f(x-v)-\partial_x \mathcal L(x,y,\lambda),\\
	\label{eq:2bapd-y}
	\label{eq:2bapd-l}
	\theta\lambda ' = {}\nabla_\lambda \mathcal L(v,w,\lambda),\\
	\label{eq:2bapd-w}
	\beta w' \in {}\mu_g(x-v)-\partial_y \mathcal L(x,y,\lambda),\\
	y' = {}w-y.
\end{subnumcases}
This seems a little bit complicated but for algorithm designing and convergence analysis, it is more convenient for us to start form \cref{eq:2bapd} and treat $(\theta,\gamma,\beta)$ as unknowns that solve \cref{eq:2bapd-scaling}.

Let $\Theta=(\theta,\gamma,\beta)$ and $X=(x,y,v,w,\lambda)$ and define a Lyapunov function
\begin{equation}\label{eq:Et}
	\begin{aligned}
		\mathcal E(\Theta,X): = \mathcal L (x,y,\lambda^*)-\mathcal L (x^*,y^*,\lambda)
		+\frac{\theta}{2}\nm{\lambda-\lambda^*}^2
		+\frac{\beta}{2}\nm{w-y^*}^2
		+\frac{\gamma}{2}\nm{v-x^*}^2,
	\end{aligned}
\end{equation}
where $(x^*,y^*,\lambda^*)$ is a saddle-point of \cref{eq:L}.
We aim to establish the decay rate of $ 	\mathcal E$, by taking derivative with respect to $t$. However, we have to mention that (i) solution existence of \cref{eq:2bapd} (or \cref{eq:2bapd-inclu}) in proper sense has not been given and (ii) smoothness property of the solution is also unknown. We set those aspects aside as they are beyond the scope of this work. For more discussions, we refer to \cref{sec:discuss-well}.

To show the usefulness of our model, we prove the exponential decay under the smooth assumption: $f\in\mathcal S_{\mu_f,L_f}^{1,1}$ and $g\in\mathcal S_{\mu_g,L_g}^{1,1}$. In this setting, the differential inclusion \cref{eq:2bapd} becomes a standard first-order dynamical system, with subgradients being Lipschitzian gradients,
and it is not hard to conclude the well-posedness
of a classical $C^1$ solution by standard theory of ordinary differential equations. 
\begin{thm}\label{thm:ASPD-exp}
	Assume $f\in\mathcal S_{\mu_f,L_f}^{1,1}$ and $g\in\mathcal S_{\mu_g,L_g}^{1,1}$ with $\mu_f,\,\mu_g\geq  0$. Let $\Theta=(\theta,\gamma,\beta)$  solve \cref{eq:2bapd-scaling} and $X=(x,y,v,w,\lambda)$ be the unique $C^1$ solution to \cref{eq:2bapd}, then it holds that
	\begin{equation}\label{eq:dEt}
		\frac{\dd }{\dd t} 	\mathcal E(\Theta,X)
		\leq  -	\mathcal E(\Theta,X)
		-\frac{\mu_f}{2}\nm{x'}^2-\frac{\mu_g}{2}\nm{y'}^2,
	\end{equation}
	which yields the exponential decay
	\begin{equation}\label{eq:Et-exp}
		2e^{t}\mathcal E(\Theta(t),X(t))
		+\int_{0}^{t}e^{s}
		\left(\mu_f\nm{x'(s)}^2+\mu_g\nm{y'(s)}^2\right){\rm d}s
		\leq  2\mathcal E(\Theta(0),X(0)),
	\end{equation}
	for all $ 0\leq  t<\infty$.
\end{thm}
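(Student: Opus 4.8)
The plan is to differentiate the Lyapunov function $\mathcal E(\Theta,X)$ along the trajectory, establish the pointwise inequality \cref{eq:dEt}, and then obtain \cref{eq:Et-exp} by an integrating-factor argument. Before differentiating I would record two simplifications. First, since $(x^*,y^*)$ is feasible, $Ax^*+By^*-b=0$, so $\mathcal L(x^*,y^*,\lambda)=F^*$ is constant in $\lambda$ and the Lagrangian-gap part of $\mathcal E$ reduces to $\mathcal L(x,y,\lambda^*)-F^*$. Second, in the smooth (unconstrained) setting the subdifferentials are single-valued, so I write $p:=\nabla f(x)+A^{\top}\lambda=\partial_x\mathcal L(x,y,\lambda)$ and $q:=\nabla g(y)+B^{\top}\lambda=\partial_y\mathcal L(x,y,\lambda)$, and then \cref{eq:2bapd-v} and \cref{eq:2bapd-w} become the identities $\gamma v'=\mu_f(x-v)-p$ and $\beta w'=\mu_g(y-w)-q$.

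Next I would compute the four contributions. Differentiating the quadratic terms and substituting the scaling laws \cref{eq:2bapd-scaling} together with the dynamics gives, for the $v$-term, $\tfrac{\mu_f-\gamma}{2}\nm{v-x^*}^2+\dual{\mu_f(x-v)-p,v-x^*}$, an analogous expression for the $w$-term, and $-\tfrac{\theta}{2}\nm{\lambda-\lambda^*}^2+\dual{Av+Bw-b,\lambda-\lambda^*}$ for the $\lambda$-term (using $\theta\lambda'=\nabla_\lambda\mathcal L(v,w,\lambda)=Av+Bw-b$). For the gap part, writing $\nabla f(x)+A^{\top}\lambda^*=p-A^{\top}(\lambda-\lambda^*)$ and likewise for $g$ yields $\frac{\dd}{\dd t}\mathcal L(x,y,\lambda^*)=\dual{p,x'}+\dual{q,y'}-\dual{\lambda-\lambda^*,Ax'+By'}$.

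The heart of the argument is to collect these terms. Using $x'=v-x$ and $y'=w-y$, the $p$- and $q$-contributions telescope to $-\dual{p,x-x^*}-\dual{q,y-y^*}$, which I bound from above via the strong-convexity inequality \cref{eq:ineq-mu} (applied at the base point $(x,y)$ against $(x^*,y^*)$ with the current $\lambda$) by $\mathcal L(x^*,y^*,\lambda)-\mathcal L(x,y,\lambda)-\tfrac{\mu_f}{2}\nm{x-x^*}^2-\tfrac{\mu_g}{2}\nm{y-y^*}^2$. Rewriting $\mathcal L(x^*,y^*,\lambda)=F^*$ and $\mathcal L(x,y,\lambda)=\mathcal L(x,y,\lambda^*)+\dual{\lambda-\lambda^*,Ax+By-b}$ turns this into $-\big(\mathcal L(x,y,\lambda^*)-F^*\big)-\dual{\lambda-\lambda^*,Ax+By-b}$ minus the two strong-convexity squares. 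The crucial point is that all coupling terms in $\lambda-\lambda^*$ now cancel: their total coefficient is $A(v-x-x')+B(w-y-y')$, which vanishes identically because $x'=v-x$ and $y'=w-y$. This cancellation is precisely what the design choice of driving the dual equation by $(v,w)$ buys us, and I expect it to be the main obstacle to spot in the bookkeeping.

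After this cancellation the $\tfrac{\theta}{2}\nm{\lambda-\lambda^*}^2$ terms and the gap terms match those in $-\mathcal E$, and the $\gamma$- and $\beta$-weighted squares likewise cancel, leaving only a $\mu_f$- and $\mu_g$-weighted residue. For the $f$-block this residue equals $\mu_f\big(-\tfrac12\nm{x-x^*}^2+\tfrac12\nm{v-x^*}^2+\dual{x-v,v-x^*}\big)$, which collapses to the perfect square $-\tfrac{\mu_f}{2}\nm{x-v}^2=-\tfrac{\mu_f}{2}\nm{x'}^2$, and symmetrically $-\tfrac{\mu_g}{2}\nm{y'}^2$ for the $g$-block. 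This gives $\frac{\dd}{\dd t}\mathcal E+\mathcal E\le -\tfrac{\mu_f}{2}\nm{x'}^2-\tfrac{\mu_g}{2}\nm{y'}^2$, i.e.\ \cref{eq:dEt}. Finally, multiplying by $e^t$ converts the left side into $\frac{\dd}{\dd t}\big(e^t\mathcal E\big)$; integrating over $[0,t]$ and doubling yields \cref{eq:Et-exp}. Beyond the two cancellations (the bilinear $\lambda$-terms and the scaling-weighted squares), the remaining work is the perfect-square identity, which is routine once $x'=v-x$, $y'=w-y$ and the feasibility of $(x^*,y^*)$ have been used.
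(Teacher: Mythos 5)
Your proposal is correct and takes essentially the same route as the paper's proof: both differentiate the Lyapunov function along the flow, invoke the strong-convexity inequality \cref{eq:ineq-mu}, rely on the cancellation of all bilinear $\lambda-\lambda^*$ terms (which follows from $x'=v-x$, $y'=w-y$ and the feasibility $Ax^*+By^*=b$), and finish with the perfect-square identity \cref{eq:x-y-z} plus an integrating-factor/Gronwall step. The only difference is bookkeeping order—you apply \cref{eq:ineq-mu} at the current $\lambda$ and then use linearity of $\mathcal L$ in $\lambda$ to shift to $\lambda^*$, whereas the paper shifts $\lambda$ to $\lambda^*$ before applying the convexity inequality—an equivalent rearrangement of the same argument.
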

\begin{proof}
	As \cref{eq:Et-exp} can be obtained directly from \cref{eq:dEt}, it is sufficient to establish 
	the latter. Let us start from the identity $		\frac{\dd }{\dd t}	\mathcal E\left(\Theta,X\right)
	= \dual{\nabla_{\Theta} \mathcal E,\Theta'}+\dual{\nabla_X \mathcal E,X'} $.
	By \cref{eq:2bapd-scaling,eq:Et}, it is trivial that
	\[
	\begin{aligned}
		\dual{\nabla_{\Theta} \mathcal E,\Theta'}
		={}&	-\frac{\theta}{2}\nm{\lambda-\lambda^*}^2	
		+ \frac{\mu_f-\gamma}{2}\nm{v-x^*}^2
		+ \frac{\mu_g-\beta}{2}\nm{w-y^*}^2,
	\end{aligned}
	\]
	and according to \cref{eq:2bapd}, a direct computation gives
	\[
	\begin{aligned}
		\dual{\nabla_X \mathcal E,X'} 
		={}&\dual{\lambda-\lambda^*, \nabla_\lambda \mathcal L (v,w,\lambda)}	+\dual{v-x,\nabla_x\mathcal L (x,y,\lambda^*)}
		+\dual{w-y,\nabla_y\mathcal L (x,y,\lambda^*)}\\
		{}&\quad 	+\dual{v-x^*,\mu_f(x-v) -\nabla_x\mathcal L (x,y,\lambda)}
		+\dual{w-y^*,\mu_g(y-w) -\nabla_y\mathcal L (x,y,\lambda)}.
	\end{aligned}
	\]
	Shifting $\lambda$ to $\lambda^*$ yields
	\[
	\begin{aligned}
		{}&-\dual{v-x^*,\nabla_x\mathcal L (x,y,\lambda)}
		-\dual{w-y^*,\nabla_y\mathcal L (x,y,\lambda)}\\
		=	&-\dual{v-x^*,\nabla_x\mathcal L (x,y,\lambda^*)}
		-\dual{w-y^*,\nabla_y\mathcal L (x,y,\lambda^*)}
		-\dual{\lambda-\lambda^*,Av+Bw-b},
	\end{aligned}
	\]
	where we have used 	the optimality condition $Ax^*+By^* = b$. It follows from \cref{eq:ineq-mu} that
	\begin{equation}\label{eq:dEXdX}
		\begin{aligned}
			\dual{\nabla_X \mathcal E,X'}
			={}&
			\dual{x^*-x,\nabla_x\mathcal L (x,y,\lambda^*)}
			+\dual{y^*-y,\nabla_y\mathcal L (x,y,\lambda^*)}\\
			{}&\qquad 	+\mu_f\dual{x-v ,v-x^*}+\mu_g\dual{y-w ,w-y^*}\\
			\leq {}& 
			\mathcal L (x^*,y^*,\lambda)-\mathcal L (x,y,\lambda^*)-\frac{\mu_f}{2}\nm{x-x^*}^2
			-\frac{\mu_g}{2}\nm{y-y^*}^2\\
			{}&\qquad 	+\mu_f\dual{x-v ,v-x^*}+\mu_g\dual{y-w ,w-y^*}.
		\end{aligned}
	\end{equation}
	In view of the trivial but useful identity of vectors
	\begin{equation}\label{eq:x-y-z}
		2\dual{u-z,z-a} = \nm{u-a}^2-\nm{z-a}^2-\nm{u-z}^2\quad \forall\,u,z,a,
	\end{equation}
	we rearrange the last two cross terms in \cref{eq:dEXdX} and put everything together to get
	\[
	\frac{\dd }{\dd t}	\mathcal E\left(\Theta,X\right)\leq  
	-\mathcal E(\Theta,X)
	-\frac{\mu_f}{2}\nm{x-v}^2-\frac{\mu_g}{2}\nm{y-w}^2.
	\]
	Observing that $x-v=x'$ and $y-w=y'$, we obtain \cref{eq:dEt} and complete the proof.
\end{proof}
\begin{rem}
	Thanks to the three scaling parameters 	introduced in \cref{eq:2bapd-scaling}, the exponential decay of the Lyapunov function \cref{eq:Et} holds uniformly for $\mu_f,\,\mu_g\geq  0$. In discrete level, it allows us to treat convex and (partially) strongly convex cases in a unified manner and obtain automatically changing parameters by implicit discretization of \cref{eq:2bapd-scaling}, which is the key for our delicate mixed-type estimates. 
\end{rem}
\begin{rem}\label{rem:rate}
	From \cref{eq:Et-exp} we have the exponential decay rate of the Lagrangian duality gap:
	\[
	\mathcal L(x(t),y(t),\lambda^*)-\mathcal L(x^*,y^*,\lambda(t))=O(e^{-t}).
	\]
	Invoking the proofs of \cite[Lemma 2.1]{luo_acc_primal-dual_2021} and \cite[Corollary 2.1]{luo_primal-dual_2022}, we can further establish 
	\[
	\nm{Ax(t)+By(t)-b} + \snm{F(x(t),y(t))-F^*}=O(e^{-t}).
	\]
	Moreover, by \cref{eq:ineq-mu} and the above estimates, we conclude that
	\[
	\mu_f\nm{x(t)-x^*}^2
	+\mu_g\nm{y(t)-y^*}^2=O(e^{-t}),
	\]
	which means strong convergence $x(t)\to x^*$ (or $y(t)\to y^*$) follows if $\mu_f>0$ (or $\mu_g>0$).
\end{rem}

\section{The First Family of Methods}
\label{sec:2bapd-im-x-im-y}
We now turn to the numerical aspect of our continuous model \cref{eq:2bapd}. In view of \cref{eq:partial-L},
the ways to discretize $(x,\lambda)$ in \eqref{eq:2bapd-v} and $(y,\lambda)$ in \eqref{eq:2bapd-w} are crucial, and $\lambda$ plays an important role of decoupling $x$ and $y$. In this work, 
we always use the same discretization for $\lambda$ in \eqref{eq:2bapd-v} and \eqref{eq:2bapd-w}, and for the case of different choices, we refer to the discussion in \cref{sec:discuss-suc}.

In this section, we impose the following assumption:
\begin{assum}
	\label{assum:f-g}
	$f\in\mathcal S_{\mu_f}^0(\mathcal X)$ with $\mu_f\geq  0$ and 
	$g\in\mathcal S_{\mu_g}^0(\mathcal Y)$ with $\mu_g\geq  0$.
\end{assum}
For this nonsmooth setting, we adopt implicit discretizations $(x_{k+1},\barlk)$ and $(y_{k+1},\barlk)$ for \eqref{eq:2bapd-v} and \eqref{eq:2bapd-w}, where $\barlk$ is to be determined. That is, given the initial guess $(x_0,v_0,y_0,w_0,\lambda_0)$, consider an implicit discretization for \cref{eq:2bapd}:
\begin{subnumcases}{\label{eq:2bapd-im-x-im-y-lbar}}
	\label{eq:2bapd-im-x-im-y-lbar-x}
	\frac{x_{k+1}-x_k}{\alpha_k}=v_{k+1} - x_{k+1},\\
	\label{eq:2bapd-im-x-im-y-lbar-v}
	\gamma_{k} \frac{v_{k+1}-v_k}{\alpha_k} \in {}\mu_f(x_{k+1}-v_{k+1})- \partial_x\mathcal L(x_{k+1},y_{k+1},\barlk),\\
	\theta_{k} \frac{\lambda_{k+1}-\lambda_k}{\alpha_k} = {}\nabla_\lambda \mathcal L(v_{k+1} ,w_{k+1} ,\lambda_{k+1}),		\label{eq:2bapd-im-x-im-y-lbar-l}\\
	\beta_{k}\frac{w_{k+1}-w_k}{\alpha_k} \in {}\mu_g(y_{k+1}-w_{k+1})- \partial_{y}\mathcal L(x_{k+1},y_{k+1},\barlk),
	\label{eq:2bapd-im-x-im-y-lbar-w}\\
	\label{eq:2bapd-im-x-im-y-lbar-y}
	\frac{y_{k+1}-y_k}{\alpha_k}=w_{k+1} - y_{k+1},
\end{subnumcases}
where $\alpha_k>0$ is the step size and the parameter system \cref{eq:2bapd-scaling} is discretized implicitly by 
\begin{equation}
	\label{eq:2bapd-tk}
	\frac{\theta_{k+1}-\theta_{k}}{\alpha_k}={}-\theta_{k+1}, \quad
	\frac{\gamma_{k+1}-\gamma_{k}}{\alpha_k}={}\mu_f-\gamma_{k+1},\quad
	\frac{\beta_{k+1}-\beta_{k}}{\alpha_k}={}\mu_g-\beta_{k+1},
\end{equation}
with initial conditions: $	\theta_0 = 1,\,\gamma_0>0$ and $\beta_0>0$. 

Rearrange \cref{eq:2bapd-im-x-im-y-lbar} in the usual primal-dual formulation:
\begin{subnumcases}{	\label{eq:2bapd-im-x-im-y-lbar-arg}}
	v_{k+1} = x_{k+1}+(x_{k+1}-x_k)/\alpha_k,
	\label{eq:2bapd-im-x-im-y-lbar-v-arg}\\
	x_{k+1}
	={}\mathop{\argmin}\limits_{x\in\mathcal X}
	\left\{
	\mathcal L(x,y_{k+1},\barlk)
	+\frac{\eta_{f,k}}{2\alpha^2_k}\nm{x-\widetilde{x}_k}^2\right\},
	\label{eq:2bapd-im-x-im-y-lbar-x-arg}\\
	\lambda_{k+1}
	={}\lambda_k+\alpha_k/\theta_k(Av_{k+1}+Bw_{k+1}-b),
	\label{eq:2bapd-im-x-im-y-lbar-l-arg}\\		
	y_{k+1}
	={}\mathop{\argmin}\limits_{y\in\mathcal Y}
	\left\{
	\mathcal L(x_{k+1},y,\barlk)
	+\frac{\eta_{g,k}}{2\alpha^2_k}\nm{y-\widetilde{y}_k}^2\right\},
	\label{eq:2bapd-im-x-im-y-lbar-y-arg}\\
	w_{k+1} = y_{k+1}+(y_{k+1}-y_k)/\alpha_k,
	\label{eq:2bapd-im-x-im-y-lbar-w-arg}
\end{subnumcases}
where $\eta_{f,k}:=(\alpha_k+1)\gamma_k+\mu_f\alpha_k,\,\eta_{g,k}:=(\alpha_k+1)\beta_k+\mu_g\alpha_k$ and 
\begin{equation}
	\label{eq:tilde-xk}
	\widetilde{x}_k :={}x_k+\frac{\alpha_k\gamma_k}{\eta_{f,k}}(v_k-x_k),\quad
	\widetilde{y}_k :={}y_k+\frac{\alpha_k\beta_k}{\eta_{g,k}}(w_k-y_k).
\end{equation}
Note that \cref{eq:2bapd-im-x-im-y-lbar-arg} is an {\it informal} expression since  the term $\barlk$ has not been determined yet. It brings hidden augmented terms for \eqref{eq:2bapd-im-x-im-y-lbar-x-arg} and \eqref{eq:2bapd-im-x-im-y-lbar-y-arg} with possible linearization and decoupling, and 
different choices lead to our first family of methods. Specifically, we shall adopt two semi-implicit candidates \cref{eq:2bapd-im-x-im-y-lbar-lv,eq:2bapd-im-x-im-y-lbar-lw} and the explicit one \cref{eq:2bapd-im-x-im-y-lbar-lk};
see \cref{sec:ex-x-im-y-lbar,sec:ex-x-im-y-lbar-alter,sec:ex-x-im-y-lk} for more details. 

Below, we give a one-iteration analysis for the implicit scheme \cref{eq:2bapd-im-x-im-y-lbar}. Then the nonergodic mixed-type convergence rates of our first family of methods can be obtained.
\subsection{A single-step analysis}
For the sequence $\{(x_k,v_k,y_k,w_k,\lambda_k)\}_{k=0}^{\infty}$ generated by \cref{eq:2bapd-im-x-im-y-lbar} and the parameter sequence $\{(\theta_k,\gamma_k,\beta_k)\}_{k=0}^{\infty}$ defined by \cref{eq:2bapd-tk}, we introduce a discrete Lyapunov function
\begin{equation}\label{eq:Ek-apd}
	\mathcal E_k: = \mathcal L (x_k,y_k,\lambda^*)-\mathcal L (x^*,y^*,\lambda_k)
	+\frac{\gamma_k}{2}\nm{v_k-x^*}^2
	+\frac{\beta_k}{2}\nm{w_k-y^*}^2
	+\frac{\theta_k}{2}\nm{\lambda_k-\lambda^*}^2,
\end{equation}
which is the discrete analogue of \cref{eq:Et} and will be used for all the forthcoming methods.
\begin{lem}\label{lem:2bapd-im-x-im-y-one-step}
	Let $k\in\mathbb N$ be fixed. For the implicit scheme \cref{eq:2bapd-im-x-im-y-lbar} with \cref{assum:f-g} and the step size $\alpha_k>0$, we have that
	\begin{equation}\label{eq:2bapd-im-x-im-y-one-step}
		\begin{aligned}
			\mathcal E_{k+1}-\mathcal E_k
			\leq &-\alpha_k\mathcal E_{k+1}+
			\frac{\theta_{k}}{2}\nm{\lambda_{k+1}-\barlk}^2	
			-	\frac{\gamma_{k}}{2}\nm{v_{k+1}-v_k}^2
			-	\frac{\beta_{k}}{2}\nm{w_{k+1}-w_k}^2.
		\end{aligned}
	\end{equation}
\end{lem}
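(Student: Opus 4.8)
The plan is to reproduce, at the discrete level, the computation behind \cref{thm:ASPD-exp}, replacing the time derivative of $\mathcal E$ by the forward difference $\mathcal E_{k+1}-\mathcal E_k$ and the differentiation rule by the elementary identity \cref{eq:x-y-z}. First I would rewrite the claimed bound equivalently as $(1+\alpha_k)\mathcal E_{k+1}-\mathcal E_k\le \frac{\theta_k}{2}\nm{\lambda_{k+1}-\barlk}^2-\frac{\gamma_k}{2}\nm{v_{k+1}-v_k}^2-\frac{\beta_k}{2}\nm{w_{k+1}-w_k}^2$, so that the left-hand side splits additively into the Lagrangian-gap increment plus the increments of the three quadratics in \cref{eq:Ek-apd}. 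Since $Ax^*+By^*=b$, the dual part of the gap is constant ($\mathcal L(x^*,y^*,\lambda_k)=F^*$ for every $k$), so only the primal gap $G_{k+1}-G_k$ with $G_k:=\mathcal L(x_k,y_k,\lambda^*)-F^*$ survives.

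For each quadratic I would combine the telescoped increment with the parameter recursions \cref{eq:2bapd-tk}; using $\gamma_{k+1}-\gamma_k=\alpha_k(\mu_f-\gamma_{k+1})$ together with \cref{eq:x-y-z} turns the $v$-increment into $\gamma_k\dual{v_{k+1}-v_k,v_{k+1}-x^*}-\frac{\gamma_k}{2}\nm{v_{k+1}-v_k}^2+\frac{\alpha_k\mu_f}{2}\nm{v_{k+1}-x^*}^2-\frac{\alpha_k\gamma_{k+1}}{2}\nm{v_{k+1}-x^*}^2$, and analogously for $w$ and $\lambda$. Here the squared-difference terms already match $-\frac{\gamma_k}{2}\nm{v_{k+1}-v_k}^2$ and $-\frac{\beta_k}{2}\nm{w_{k+1}-w_k}^2$ on the right, while the pieces $-\frac{\alpha_k\gamma_{k+1}}{2}\nm{v_{k+1}-x^*}^2$, $-\frac{\alpha_k\beta_{k+1}}{2}\nm{w_{k+1}-y^*}^2$ and $-\frac{\alpha_k\theta_{k+1}}{2}\nm{\lambda_{k+1}-\lambda^*}^2$ are exactly the quadratic part of the desired $-\alpha_k\mathcal E_{k+1}$. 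Into the leftover cross terms I would substitute the scheme \cref{eq:2bapd-im-x-im-y-lbar-v,eq:2bapd-im-x-im-y-lbar-w,eq:2bapd-im-x-im-y-lbar-l}, writing $\gamma_k(v_{k+1}-v_k)=\alpha_k(\mu_f(x_{k+1}-v_{k+1})-\xi_x)$ with $\xi_x\in\partial_x\mathcal L(x_{k+1},y_{k+1},\barlk)$ (and likewise $\xi_y$), and $\theta_k(\lambda_{k+1}-\lambda_k)=\alpha_k(Av_{k+1}+Bw_{k+1}-b)$.

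The primal gap is bounded from above by convexity: reading \cref{eq:ineq-mu} with the two points exchanged bounds $G_{k+1}-G_k$ by subgradient pairings at $(x_{k+1},y_{k+1})$, and I would choose those subgradients at $\lambda^*$ to be $\xi_x+A^{\top}(\lambda^*-\barlk)$ and $\xi_y+B^{\top}(\lambda^*-\barlk)$, admissible by \cref{eq:partial-L}. Using $x_{k+1}-x_k=\alpha_k(v_{k+1}-x_{k+1})$ and $y_{k+1}-y_k=\alpha_k(w_{k+1}-y_{k+1})$, the $\xi_x,\xi_y$ contributions coming from the gap and from the quadratic cross terms recombine so that $v_{k+1}$ and $w_{k+1}$ cancel, leaving $\alpha_k(\dual{\xi_x,x^*-x_{k+1}}+\dual{\xi_y,y^*-y_{k+1}})$. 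A second use of \cref{eq:ineq-mu} at $\lambda=\barlk$, with $\mathcal L(x^*,y^*,\barlk)=F^*$, controls this by $-\alpha_k G_{k+1}-\alpha_k\dual{\barlk-\lambda^*,Ax_{k+1}+By_{k+1}-b}$ minus the strong-convexity squares; the $-\alpha_k G_{k+1}$ supplies the gap part of $-\alpha_k\mathcal E_{k+1}$.

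The delicate step, and the one I expect to be the main obstacle, is the dual/feasibility bookkeeping, since this is where the still-undetermined multiplier $\barlk$ and the feasibility residual must be absorbed into the single error term $\frac{\theta_k}{2}\nm{\lambda_{k+1}-\barlk}^2$. Collecting the leftover $A^{\top},B^{\top}$ cross terms from the gap, the residual $\alpha_k\dual{\lambda^*-\barlk,Ax_{k+1}+By_{k+1}-b}$ above, and the dual cross term $\alpha_k\dual{Av_{k+1}+Bw_{k+1}-b,\lambda_{k+1}-\lambda^*}$, everything telescopes into $\alpha_k\dual{Av_{k+1}+Bw_{k+1}-b,\lambda_{k+1}-\barlk}$, which by \cref{eq:2bapd-im-x-im-y-lbar-l} equals $\theta_k\dual{\lambda_{k+1}-\lambda_k,\lambda_{k+1}-\barlk}$; combined with the $-\frac{\theta_k}{2}\nm{\lambda_{k+1}-\lambda_k}^2$ produced earlier and the polarization $2\dual{\lambda_{k+1}-\lambda_k,\lambda_{k+1}-\barlk}=\nm{\lambda_{k+1}-\lambda_k}^2+\nm{\lambda_{k+1}-\barlk}^2-\nm{\lambda_k-\barlk}^2$, this yields exactly $\frac{\theta_k}{2}\nm{\lambda_{k+1}-\barlk}^2-\frac{\theta_k}{2}\nm{\lambda_k-\barlk}^2$, and the last term is discarded. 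Finally I would verify that all $\mu_f$ contributions, namely $-\frac{\mu_f}{2}\nm{x_{k+1}-x_k}^2$ from the gap, $\frac{\alpha_k\mu_f}{2}\nm{v_{k+1}-x^*}^2$, the cross term $\alpha_k\mu_f\dual{x_{k+1}-v_{k+1},v_{k+1}-x^*}$ and $-\frac{\alpha_k\mu_f}{2}\nm{x_{k+1}-x^*}^2$, collapse through \cref{eq:x-y-z} to $-\frac{\alpha_k\mu_f}{2}\nm{x_{k+1}-v_{k+1}}^2\le0$ (and symmetrically for $\mu_g$), so they may be dropped, which delivers \cref{eq:2bapd-im-x-im-y-one-step}.
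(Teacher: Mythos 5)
Your proposal is correct and follows essentially the same route as the paper's proof: the same decomposition of $\mathcal E_{k+1}-\mathcal E_k$ into the primal gap increment plus three quadratic increments, the same use of the recursions \cref{eq:2bapd-tk} and the identity \cref{eq:x-y-z}, the same two applications of \cref{eq:ineq-mu} (once for the gap increment with subgradients taken at $\barlk$, once at the saddle point), and the same treatment of the dual term via \eqref{eq:2bapd-im-x-im-y-lbar-l} and polarization, with the $-\frac{\theta_k}{2}\nm{\lambda_k-\barlk}^2$ term discarded. The only differences are cosmetic reorderings of the bookkeeping (e.g., telescoping all feasibility cross terms at once rather than splitting $\lambda_{k+1}-\lambda^*$ inside the dual increment), so the argument matches the paper's.
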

\begin{proof}
	Let us calculate the difference $	\mathcal E_{k+1}-	\mathcal E_k=\mathbb I_1+\mathbb I_2+\mathbb I_3+\mathbb I_4$, where
	\begin{equation}\label{eq:2bapd-im-x-im-y-I1-I4}
		\begin{aligned}
			\mathbb I_1:	={}&\mathcal L(x_{k+1},y_{k+1},\lambda^*)-\mathcal L(x_k,y_k,\lambda^*),\\
			\mathbb I_2:=	{}&\frac{\theta_{k+1}}{2}
			\nm{\lambda_{k+1}-\lambda^*}^2 - 
			\frac{\theta_{k}}{2}
			\nm{\lambda_{k}-\lambda^*}^2,\\
			\mathbb I_3:=	{}&\frac{\gamma_{k+1}}{2}
			\nm{v_{k+1}-x^*}^2 - 
			\frac{\gamma_{k}}{2}
			\nm{v_{k}-x^*}^2,\\
			\mathbb I_4:=	{}&\frac{\beta_{k+1}}{2}
			\nm{w_{k+1}-y^*}^2 - 
			\frac{\beta_{k}}{2}
			\nm{w_{k}-y^*}^2.
		\end{aligned}
	\end{equation}
	In what follows, we aim to estimate the above four terms one by one.
	
	In view of \eqref{eq:2bapd-im-x-im-y-lbar-x-arg} and \eqref{eq:2bapd-im-x-im-y-lbar-y-arg}, it is clear that $(x_{k+1},y_{k+1})\in\mathcal X\times \mathcal Y$. 
	By \eqref{eq:2bapd-im-x-im-y-lbar-v} and \eqref{eq:2bapd-im-x-im-y-lbar-w}, we have 
	\begin{align}
		\label{eq:pk1-2bapd-im-x-im-y-lbar}
		p_{k+1} :={}& \mu_f(x_{k+1}-v_{k+1})-\gamma_k\frac{v_{k+1}-v_k}{\alpha_k}
		\in\partial_x\mathcal L(x_{k+1},y_{k+1},\barlk),\\
		q_{k+1} :={}& \mu_g(y_{k+1}-w_{k+1})-\beta_k\frac{w_{k+1}-w_k}{\alpha_k}
		\in\partial_{y}\mathcal L(x_{k+1},y_{k+1},\barlk).
		\label{eq:qk1-2bapd-im-x-im-y-lbar}
	\end{align}
	Thanks to the inequality \cref{eq:ineq-mu}, it follows that
	\begin{equation}\label{eq:I1-2bapd-im-x-im-y}
		\begin{aligned}
			\mathbb I_1
			={}&\mathcal L(x_{k+1},y_{k+1},\barlk)-\mathcal L(x_k,y_k,\barlk)\\
			{}&\quad
			+\dual{\lambda^*-\barlk,A(x_{k+1}-x_k)+B(y_{k+1}-y_k)}\\
			\leq  {}&
			\dual{p_{k+1},x_{k+1}-x_k}
			+\dual{q_{k+1},y_{k+1}-y_k}\\
			{}&\quad		+\dual{\lambda^*-\barlk,A(x_{k+1}-x_k)+B(y_{k+1}-y_k)}.
		\end{aligned}
	\end{equation}
	By the equation of the sequence $\{\theta_k\}_{k=0}^{\infty}$ in \cref{eq:2bapd-tk}, there holds
	\[
	\begin{aligned}
		\mathbb I_2	=	{}&			\frac{\theta_{k+1}-\theta_k}{2}
		\nm{\lambda_{k+1}-\lambda^*}^2+\frac{\theta_{k}}{2}
		\left(\nm{\lambda_{k+1}-\lambda^*}^2-\nm{\lambda_{k}-\lambda^*}^2 
		\right)\\
		=&		-\frac{\alpha_k\theta_{k+1}}{2}\nm{\lambda_{k+1}-\lambda^*}^2
		+\theta_{k}\dual{\lambda_{k+1}-\lambda_k,\lambda_{k+1}-\lambda^*}
		-\frac{\theta_{k}}{2}\nm{\lambda_{k+1}-\lambda_k}^2.
	\end{aligned}
	\]	
	To match the term $\barlk$ in \cref{eq:I1-2bapd-im-x-im-y}, we use \eqref{eq:2bapd-im-x-im-y-lbar-l} to rewrite the last two terms
	\[
	\begin{aligned}
		{}&		\theta_{k}\dual{\lambda_{k+1}-\lambda_k,\lambda_{k+1}-\lambda^*}
		-\frac{\theta_{k}}{2}\nm{\lambda_{k+1}-\lambda_k}^2\\
		={}&\theta_{k}\dual{\lambda_{k+1}-\lambda_k,\lambda_{k+1}- \barlk+ \barlk-\lambda^*}-\frac{\theta_{k}}{2}
		\nm{\lambda_{k+1}-\lambda_k}^2\\	
		={}&\alpha_{k}\dual{Av_{k+1}+Bw_{k+1}-b, \barlk-\lambda^*}
		+\frac{\theta_{k}}{2}\nm{\lambda_{k+1}-\barlk}^2-\frac{\theta_{k}}{2}\nm{\lambda_{k}-\barlk}^2.
	\end{aligned}
	\]	
	This implies the estimate
	\begin{equation}
		\label{eq:I2-2bapd-im-x-im-y}
		\begin{aligned}
			\mathbb I_2	
			\leq  {}&\alpha_{k}\dual{Av_{k+1}+Bw_{k+1}-b, \barlk-\lambda^*}
			+\frac{\theta_{k}}{2}\nm{\lambda_{k+1}-\barlk}^2	-\frac{\alpha_k\theta_{k+1}}{2}\nm{\lambda_{k+1}-\lambda^*}^2.
		\end{aligned}
	\end{equation}
	
	Similarly, using the equation of $\{\gamma_k\}_{k=0}^{\infty}$ in \cref{eq:2bapd-tk}, we have
	\begin{equation}\label{eq:I3-2bapd-im-x-im-y-mid}
		\begin{aligned}
			\mathbb I_3={}&			\frac{\gamma_{k+1}-\gamma_k}{2}
			\nm{v_{k+1}-x^*}^2+\frac{\gamma_{k}}{2}
			\left(\nm{v_{k+1}-x^*}^2-\nm{v_{k}-x^*}^2 
			\right)\\
			={}&
			\frac{\alpha_k(\mu_f-\gamma_{k+1})}{2}
			\nm{v_{k+1}-x^*}^2		-	\frac{\gamma_{k}}2
			\nm{v_{k+1}-v_k}^2
			+		\gamma_{k}
			\dual{v_{k+1}-v_k, v_{k+1} -x^*}.
		\end{aligned}
	\end{equation}
	In view of \cref{eq:pk1-2bapd-im-x-im-y-lbar}, we rewrite the last cross term by that
	\[
	\begin{aligned}
		\gamma_{k}
		\dual{v_{k+1}-v_k, v_{k+1} -x^*}
		={}&	\mu_f\alpha_k\dual{x_{k+1}-v_{k+1}, v_{k+1} -x^*}
		-\alpha_k	\dual{p_{k+1}, v_{k+1} -x^*}.
	\end{aligned}
	\]
	Using \cref{eq:x-y-z} and \eqref{eq:2bapd-im-x-im-y-lbar-v-arg} and summarizing the above decompositions yield that
	\begin{equation}\label{eq:I3-2bapd-im-x-im-y}
		\begin{aligned}
			\mathbb I_3=&-
			\frac{\alpha_k \gamma_{k+1}}{2}
			\nm{v_{k+1}-x^*}^2-	\frac{\gamma_{k}}2\nm{v_{k+1}-v_k}^2
			-\frac{\mu_f\alpha_k}{2}\nm{x_{k+1}-v_{k+1}}^2
			\\
			{}&\quad+
			\frac{\mu_f\alpha_k}{2}
			\nm{x_{k+1}-x^*}^2
			-\alpha_k
			\dual{p_{k+1}, x_{k+1} -x^*}
			-\dual{p_{k+1}, x_{k+1} -x_k}.
		\end{aligned}
	\end{equation}
	Analogously, by \cref{eq:2bapd-tk}, \cref{eq:qk1-2bapd-im-x-im-y-lbar} and \eqref{eq:2bapd-im-x-im-y-lbar-w-arg}, we have
	\begin{equation}\label{eq:I4-2bapd-im-x-im-y}
		\begin{aligned}
			\mathbb I_4=&-
			\frac{\alpha_k \beta_{k+1}}{2}
			\nm{w_{k+1}-y^*}^2-	\frac{\beta_{k}}2\nm{w_{k+1}-w_k}^2
			-\frac{\mu_g\alpha_k}{2}\nm{y_{k+1}-w_{k+1}}^2
			\\
			{}&\quad+
			\frac{\mu_g\alpha_k}{2}
			\nm{y_{k+1}-y^*}^2
			-\alpha_k
			\dual{q_{k+1}, y_{k+1} -y^*}
			-
			\dual{q_{k+1}, y_{k+1} -y_k}.
		\end{aligned}
	\end{equation}
	
	Now, collecting \cref{eq:I2-2bapd-im-x-im-y,eq:I3-2bapd-im-x-im-y,eq:I4-2bapd-im-x-im-y,eq:ineq-mu},
	we arrive at the upper bound
	\[
	\begin{aligned}
		\mathbb I_2+\mathbb I_3+\mathbb I_4
		\leq  	&-\alpha_k\mathcal E_{k+1}
		+\frac{\theta_{k}}{2}\nm{\lambda_{k+1}-\barlk}^2-	\frac{\beta_{k}}{2}\nm{w_{k+1}-w_k}^2\\
		{}&\quad-\dual{p_{k+1}, x_{k+1} -x_k}-\dual{q_{k+1}, y_{k+1} -y_k}
		-	\frac{\gamma_{k}}{2}\nm{v_{k+1}-v_k}^2\\
		{}&\qquad 
		-\dual{\lambda^*-\barlk,A(x_{k+1}-x_k)+B(y_{k+1}-y_k)}
		.
	\end{aligned}
	\]
	Plugging \cref{eq:I1-2bapd-im-x-im-y} into the above estimate gives 
	\[
	\begin{aligned}
		\mathcal E_{k+1}-\mathcal E_k
		\leq &-\alpha_k\mathcal E_{k+1}+
		\frac{\theta_{k}}{2}\nm{\lambda_{k+1}- \barlk}^2
		-	\frac{\gamma_{k}}{2}\nm{v_{k+1}- v_k}^2
		-	\frac{\beta_{k}}{2}\nm{w_{k+1}-w_k}^2.
	\end{aligned}
	\]
	This establishes \cref{eq:2bapd-im-x-im-y-one-step} and finishes the proof of this lemma.
\end{proof}
\subsection{A semi-implicit choice}
\label{sec:ex-x-im-y-lbar}
According to the single step estimate \cref{eq:2bapd-im-x-im-y-one-step}, it is evident that the implicit choice $\barlk=\lambda_{k+1}$ indicates the contraction 
\begin{equation}\label{eq:2bapd-ex-x-im-y-lk1-contrac-Ek}
	\begin{aligned}
		\mathcal E_{k+1}-	\mathcal E_k
		\leq  -\alpha_k		\mathcal E_{k+1},
	\end{aligned}
\end{equation}
which holds for any $\alpha_k>0$, even for $\mu_f=\mu_g=0$.
This together with the fact (cf.\cref{eq:2bapd-tk})
\begin{equation}\label{eq:tk-prod}
	\theta_{k+1}=	\frac{\theta_{k}}{1+\alpha_k}\quad\Longrightarrow\quad 
	\theta_k = \prod_{i=0}^{k-1}\frac{1}{1+\alpha_i}
\end{equation}
implies that $\mathcal E_k\leq  \theta_{k}\mathcal E_0$ and the linear rate $\theta_k \leq   (1+\alpha_{\min})^{-k}$ follows immediately if $\alpha_k\geq  \alpha_{\min}>0$. But this does not lead to a splitting method since by \eqref{eq:2bapd-im-x-im-y-lbar-l-arg}, $\lambda_{k+1}$ depends on $v_{k+1}$ and $w_{k+1}$. In other words, $x_{k+1}$ and $y_{k+1}$ are coupled with each other; see \cref{sec:discuss-im} for more discussions.

Hence, let us consider other semi-implicit choices that decouple $x_{k+1}$ and $y_{k+1}$.
Recall again the estimate \cref{eq:2bapd-im-x-im-y-one-step}, which says if we want to maintain the contraction property \cref{eq:2bapd-ex-x-im-y-lk1-contrac-Ek}, then the positive gain $\|\lambda_{k+1}-\barlk\|^2$ shall be controlled by additional two negative square norm terms $-\nm{v_{k+1}-v_k}^2$ and $-\nm{w_{k+1}-w_k}^2$. To do this, the relation 
\[
\nm{\lambda_{k+1}-\barlk}=O(\nm{w_{k+1}-w_k})\,\text{ or }\, \nm{\lambda_{k+1}-\barlk}=O(\nm{v_{k+1}-v_k})
\]
is important to be satisfied. In view of \eqref{eq:2bapd-im-x-im-y-lbar-l-arg}, we are suggested to consider 
\begin{equation}\label{eq:2bapd-im-x-im-y-lbar-lv}
	\barlk = \lambda_k+
	\alpha_k/\theta_k
	\left(Av_{k+1}+Bw_{k}-b\right),
\end{equation}
which gives the desired identity
\begin{equation}\label{eq:lk1-barlk-im-x-im-y-lw}
	\lambda_{k+1}-\barlk=\alpha_k/\theta_kB(w_{k+1}-w_k).
\end{equation}
Then by \cref{eq:2bapd-im-x-im-y-one-step}, the contraction \cref{eq:2bapd-ex-x-im-y-lk1-contrac-Ek} follows 
directly, provided that 
\begin{equation}\label{eq:key-2bapd-im-x-im-y-lw}
	\frac{\theta_k}{2}\nm{\lambda_{k+1}-\barlk}^2=\frac{\alpha_k^2}{2\theta_k}
	\nm{B(w_{k+1}-w_k)}^2\leq  \frac{\beta_k}{2}\nm{w_{k+1}-w_k}^2,
\end{equation}
which can be easily promised if $		\alpha_k^2\nm{B}^2\leq   \theta_{k}\beta_{k}$.

For $\tau>0$, introduce the proximal operator of $g$ by that
\begin{align}
	\label{eq:prox-g}
	\prox_{\tau g}^{\mathcal Y}(z): = {}&
	\mathop{\argmin}_{y\in\mathcal Y}\left\{g(y)+\frac{1}{2\tau}\nm{y-z}^2\right\}
	\quad \forall\,z\in\R^n.
\end{align}
With the choice \cref{eq:2bapd-im-x-im-y-lbar-lv}, we reformulate \cref{eq:2bapd-im-x-im-y-lbar-arg} as the following iteration
\begin{equation}\label{eq:2bapd-im-x-im-y-lv}
	\left\{
	\begin{aligned}
		{}&			\widehat{\lambda}_k={} \lambda_k-\theta_k^{-1}\left(Ax_k+By_k-b\right)
		+\alpha_k/\theta_kB(w_k-y_k),\\
		{}&		x_{k+1}
		={}\mathop{\argmin}\limits_{x\in\mathcal X}
		\left\{
		\mathcal L_{\sigma_k}(x,y_{k},\widehat{\lambda}_k)
		+\frac{\eta_{f,k}}{2\alpha^2_k}\nm{x-\widetilde{x}_k}^2\right\},\quad \sigma_k = 1/\theta_{k+1},\\
		{}&		v_{k+1} ={} x_{k+1}+(x_{k+1}-x_k)/\alpha_k,\\
		{}&	\barlk ={} \lambda_k+
		\alpha_k/\theta_k
		\left(Av_{k+1}+Bw_{k}-b\right),\\
		{}&		y_{k+1} = {}\prox_{\tau_kg}^{\mathcal Y}(\widetilde{y}_k-\tau_kB^{\top}\barlk),\quad \tau_k = \alpha_k^2/\eta_{g,k},\\
		{}&		w_{k+1} ={}y_{k+1}+(y_{k+1}-y_k)/\alpha_k,\\
		{}&	\lambda_{k+1}
		={}\lambda_k+\alpha_k/\theta_k(Av_{k+1}+Bw_{k+1}-b),
	\end{aligned}
	\right.
\end{equation}
where $(\widetilde{x}_k ,\widetilde{y}_k)$ are defined by \cref{eq:tilde-xk} and $(\eta_{f,k},\eta_{g,k})$ are the same as that in \cref{eq:2bapd-im-x-im-y-lbar-arg}.
As $\bar{\lambda}_{k+1}$ depends only on $v_{k+1}$ and $w_k$, we see that 
(i) $x_{k+1}$ and $y_{k+1}$ are weakly coupled with each other, in the sense that they can be updated sequentially; (ii) the augmented term is used for computing $x_{k+1}$ but it has been linearized for updating $y_{k+1}$, which involves only the proximal calculation of $g$.

For simplicity, in the rest of this paper, we set
\begin{equation}\label{eq:b0-g0}
	\gamma_0={}\mu_f\text{ for } \mu_f>0,\quad\text{and }
	\beta_0={}\mu_g\text{ for } \mu_g>0.
\end{equation}
Then by \cref{eq:2bapd-tk}, both $\{\gamma_k\}_{k=0}^\infty$ and $\{\beta_k\}_{k=0}^\infty$ are decreasing, and it is clear that $\mu_f\leq \gamma_k\leq\gamma_0$ and $\mu_g\leq \beta_k\leq \beta_0$ for all $k\in\mathbb N$. Moreover, we claim that 
\begin{equation}\label{eq:est-bk-gk}
	\beta_k\geq \theta_k\beta_0,\quad 	\gamma_k\geq \theta_k\gamma_0.
\end{equation}
\begin{thm}\label{thm:conv-2bapd-im-x-im-y-lv}
	If $\barlk$ is chosen from \cref{eq:2bapd-im-x-im-y-lbar-lv}, then \cref{eq:2bapd-im-x-im-y-lbar-arg} reduces to \cref{eq:2bapd-im-x-im-y-lv}. Under \cref{assum:f-g}, the initial setting \cref{eq:b0-g0} and the condition 
	$		\alpha_k^2\nm{B}^2= \theta_{k}\beta_{k}$,
	it holds that $			\mathcal E_{k+1}-	\mathcal E_k
	\leq  -\alpha_k		\mathcal E_{k+1}$.
	Moreover, we have $\{(x_k,y_k)\}_{k=1}^{\infty}\subset \mathcal X\times \mathcal Y$ and 
	\begin{equation}	\label{eq:2bapd-im-x-im-y-lw-rate}
		\left\{
		\begin{aligned}
			{}&	\nm{Ax_{k}+By_k-b}\leq 		{}
			\theta_k		\mathcal R_0,	
			\\
			{}&	\mathcal L (x_k,y_k,\lambda^*)-\mathcal L (x^*,y^*,\lambda_k)
			\leq 		{}\theta_k\mathcal E_0,	\\		
			{}&	\snm{F(x_k,y_k)-F^*}
			\leq 	{} \theta_k(\mathcal E_0+\nm{\lambda^*}\mathcal R_0).
		\end{aligned}
		\right.
	\end{equation}
	Above, $		\mathcal R_0:=\sqrt{2\mathcal  E_0}+\nm{\lambda_0-\lambda^*}+\nm{Ax_0+By_0-b}$
	and 
	\begin{equation}
		\label{eq:2bapd-im-x-im-y-lw-tk}
		\theta_k\leq  
		\min\left\{\frac{Q}{Q+\sqrt{\beta_{0}}k},\,\frac{4Q^2}{(2Q+\sqrt{\mu_g} k)^2}\right\},
	\end{equation}
	where $Q=\nm{B}+\sqrt{\beta_{0}}$.
\end{thm}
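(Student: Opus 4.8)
The plan is to dispatch the three assertions in order: the reformulation of the informal iteration \cref{eq:2bapd-im-x-im-y-lbar-arg} into the implementable scheme \cref{eq:2bapd-im-x-im-y-lv}, the one-step contraction $\mathcal E_{k+1}-\mathcal E_k\leq-\alpha_k\mathcal E_{k+1}$, and the passage from the resulting Lyapunov decay to the estimates \cref{eq:2bapd-im-x-im-y-lw-rate} and the parameter bound \cref{eq:2bapd-im-x-im-y-lw-tk}. For the reformulation I would work at the level of optimality conditions rather than substituting blindly. In the $x$-subproblem of \cref{eq:2bapd-im-x-im-y-lbar-arg} the terms carrying $y_{k+1}$ are constants, so only $f$, the coupling $A^{\top}\barlk$, the normal cone and the proximal term survive; the subtlety is that the realized value $\barlk$ contains $Av_{k+1}$ with $v_{k+1}=[(1+\alpha_k)x_{k+1}-x_k]/\alpha_k$ linear in $x_{k+1}$. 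Substituting this expression and using $\sigma_k=1/\theta_{k+1}=(1+\alpha_k)/\theta_k$ from \cref{eq:tk-prod}, I would verify the algebraic identity $\barlk=\whk+\sigma_k(Ax_{k+1}+By_k-b)$, which is precisely what converts the implicit coupling into the augmented Lagrangian $\mathcal L_{\sigma_k}(\cdot,y_k,\whk)$ with the shifted multiplier $\whk$ of \cref{eq:2bapd-im-x-im-y-lv}. For the $y$-step the choice \cref{eq:2bapd-im-x-im-y-lbar-lv} uses $w_k$ instead of $w_{k+1}$, so $\barlk$ is already known after the $x$-step; the augmented part then linearizes and, after completing the square, \cref{eq:2bapd-im-x-im-y-lbar-y-arg} collapses to the single proximal evaluation $y_{k+1}=\prox_{\tau_kg}^{\mathcal Y}(\widetilde y_k-\tau_kB^{\top}\barlk)$ with $\tau_k=\alpha_k^2/\eta_{g,k}$.

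With the scheme in hand, the contraction is short. The choice \cref{eq:2bapd-im-x-im-y-lbar-lv} gives the identity \cref{eq:lk1-barlk-im-x-im-y-lw}, so the only positive term in \cref{eq:2bapd-im-x-im-y-one-step} equals $\tfrac{\alpha_k^2}{2\theta_k}\nm{B(w_{k+1}-w_k)}^2$; under the rule $\alpha_k^2\nm{B}^2=\theta_k\beta_k$ it is absorbed by $-\tfrac{\beta_k}{2}\nm{w_{k+1}-w_k}^2$ through \cref{eq:key-2bapd-im-x-im-y-lw}, and discarding the leftover nonpositive $v$-term yields $\mathcal E_{k+1}-\mathcal E_k\leq-\alpha_k\mathcal E_{k+1}$. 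Rewriting this as $(1+\alpha_k)\mathcal E_{k+1}\leq\mathcal E_k$ and combining with $\theta_{k+1}(1+\alpha_k)=\theta_k$ shows that $\mathcal E_k/\theta_k$ is nonincreasing, hence $\mathcal E_k\leq\theta_k\mathcal E_0$. The inclusion $(x_k,y_k)\in\mathcal X\times\mathcal Y$ for $k\geq1$ is immediate from the argmin/prox structure, as already noted in \cref{lem:2bapd-im-x-im-y-one-step}.

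To translate the Lyapunov bound into \cref{eq:2bapd-im-x-im-y-lw-rate}, I would first read off $\nm{\lambda_k-\lambda^*}\leq\sqrt{2\mathcal E_0}$ and the Lagrangian-gap estimate by discarding the nonnegative quadratic pieces of $\mathcal E_k$. For the residual $r_k:=Ax_k+By_k-b$ I would substitute $v_{k+1},w_{k+1}$ from \cref{eq:2bapd-im-x-im-y-lbar-v-arg,eq:2bapd-im-x-im-y-lbar-w-arg} into the $\lambda$-update to obtain $\theta_k(\lambda_{k+1}-\lambda_k)=(1+\alpha_k)r_{k+1}-r_k$; dividing by $\theta_k$ and using $\theta_k=(1+\alpha_k)\theta_{k+1}$ gives $r_{k+1}/\theta_{k+1}-r_k/\theta_k=\lambda_{k+1}-\lambda_k$, which telescopes to the closed form $r_k=\theta_k(\lambda_k-\lambda_0+r_0)$ and hence $\nm{r_k}\leq\theta_k\mathcal R_0$. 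The objective bound follows by writing the gap as $F(x_k,y_k)-F^*+\dual{\lambda^*,r_k}$ (using $Ax^*+By^*=b$): the upper estimate combines the gap and feasibility bounds, while the saddle-point inequality $F^*\leq F(x_k,y_k)+\dual{\lambda^*,r_k}$ supplies the matching lower bound, yielding the absolute value.

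It remains to prove \cref{eq:2bapd-im-x-im-y-lw-tk}. Writing the recursion as $1/\theta_{k+1}-1/\theta_k=\alpha_k/\theta_k=\sqrt{\beta_k/\theta_k}/\nm{B}$ and invoking $\beta_k\geq\theta_k\beta_0$ from \cref{eq:est-bk-gk} gives the constant increment $1/\theta_{k+1}-1/\theta_k\geq\sqrt{\beta_0}/\nm{B}$, which telescopes to the $O(1/k)$ branch $\theta_k\leq\nm{B}/(\nm{B}+\sqrt{\beta_0}k)\leq Q/(Q+\sqrt{\beta_0}k)$. When $\mu_g>0$, the normalization \cref{eq:b0-g0} forces $\beta_k\equiv\mu_g$, and setting $u_k:=1/\sqrt{\theta_k}$ turns the recursion into $u_{k+1}^2=u_k^2+cu_k$ with $c=\sqrt{\mu_g}/\nm{B}$; completing the square gives $u_{k+1}\leq u_k+c/2$, and feeding this into $u_{k+1}-u_k=cu_k/(u_{k+1}+u_k)$ together with $u_k\geq u_0=1$ yields the uniform lower bound $u_{k+1}-u_k\geq\sqrt{\mu_g}/(2Q)$, i.e. the $O(1/k^2)$ branch; the stated estimate is the minimum of the two. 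The main obstacle I expect is the reformulation step: correctly tracking the implicit $v_{k+1}$-dependence of $\barlk$ so that the $x$-update condenses into the augmented-Lagrangian form with the precise constants $\sigma_k$ and $\whk$, while the parallel $y$-update stays a plain proximal step. Once \cref{eq:lk1-barlk-im-x-im-y-lw} and \cref{eq:est-bk-gk} are in hand, the contraction and the parameter estimates are comparatively routine.
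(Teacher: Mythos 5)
Your proof is correct, and apart from the contraction step---where you follow the paper's own route of feeding the identity \cref{eq:lk1-barlk-im-x-im-y-lw} into the one-step estimate \cref{eq:2bapd-im-x-im-y-one-step} and absorbing the gain via \cref{eq:key-2bapd-im-x-im-y-lw}---your argument genuinely diverges from the paper in two places, in both cases making it more self-contained. First, for the feasibility and objective bounds in \cref{eq:2bapd-im-x-im-y-lw-rate} the paper simply defers to the proof of Theorem~3.1 in \cite{luo_accelerated_2021}, whereas you derive them on the spot from the telescoping identity $r_{k+1}/\theta_{k+1}-r_k/\theta_k=\lambda_{k+1}-\lambda_k$ (with $r_k=Ax_k+By_k-b$), which gives the closed form $r_k=\theta_k(\lambda_k-\lambda_0+r_0)$ and hence $\nm{r_k}\leq\theta_k\mathcal R_0$; this is exactly the right mechanism. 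Second, for the decay bound \cref{eq:2bapd-im-x-im-y-lw-tk} the paper routes through \cref{lem:est-yk-case1}, i.e.\ a piecewise-linear interpolation and a continuous comparison argument, while you telescope directly in discrete time: $1/\theta_{k+1}-1/\theta_k=\alpha_k/\theta_k\geq\sqrt{\beta_0}/\nm{B}$ for the $O(1/k)$ branch, and $u_{k+1}-u_k\geq\sqrt{\mu_g}/(2Q)$ with $u_k=\theta_k^{-1/2}$ for the $O(1/k^2)$ branch. Both inequalities check out (in the second, $u_{k+1}^2=u_k^2+cu_k$ with $c=\sqrt{\mu_g}/\nm{B}$ gives $u_{k+1}+u_k\leq 2u_k+c/2$, and $u_k\geq 1$ yields $u_{k+1}-u_k\geq 2\sqrt{\mu_g}/(4\nm{B}+\sqrt{\mu_g})\geq\sqrt{\mu_g}/(2Q)$), and your first branch even produces the marginally sharper constant $\nm{B}/(\nm{B}+\sqrt{\beta_0}k)\leq Q/(Q+\sqrt{\beta_0}k)$. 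You additionally verify the reformulation identity $\barlk=\whk+\sigma_k(Ax_{k+1}+By_k-b)$, which the paper asserts without computation. What the paper's heavier machinery buys is reusability: \cref{lem:est-yk-case1,lem:est-yk-case2,lem:est-yk-case3} serve all the subsequent theorems, whose recursions carry denominators of the form $\sqrt{P\theta_k^2+Q\theta_k+R^2}$ and would require case-by-case adaptation of your elementary telescoping. Two minor points to tidy up: your $O(1/k^2)$ branch assumes $\mu_g>0$ (so that \cref{eq:b0-g0} forces $\beta_k\equiv\mu_g$); when $\mu_g=0$ that branch of \cref{eq:2bapd-im-x-im-y-lw-tk} reduces to the trivial bound $\theta_k\leq 1$, so the minimum formula still holds, but this degenerate case deserves a sentence. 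Also, the bound $\nm{\lambda_k-\lambda^*}\leq\sqrt{2\mathcal E_0}$ you read off from $\mathcal E_k\leq\theta_k\mathcal E_0$ uses that the Lagrangian gap inside $\mathcal E_k$ is nonnegative, which is the saddle-point property; it is worth stating explicitly.
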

\begin{proof}
	Based on the above discussions, to get the contraction $			\mathcal E_{k+1}-	\mathcal E_k
	\leq  -\alpha_k		\mathcal E_{k+1}$, we only need to verify \cref{eq:key-2bapd-im-x-im-y-lw} under the condition $		\alpha_k^2\nm{B}^2= \beta_{k}\theta_{k}$, which is trivial. By \cref{eq:tk-prod}, this gives $\mathcal E_k\leq  \theta_k\mathcal E_0$  and also implies that
	\[
	\mathcal L (x_k,y_k,\lambda^*)-\mathcal L (x^*,y^*,\lambda_k)
	\leq 		{}\theta_k\mathcal E_0.
	\]
	Following the proof of \cite[Theorem 3.1]{luo_accelerated_2021}, we can establish 
	\[
	\nm{Ax_{k}+By_k-b}\leq 		{}
	\theta_k		\mathcal R_0,\,\,\text{and}\,\, \snm{F(x_k,y_k)-F^*}
	\leq 	{} \theta_k(\mathcal E_0+\nm{\lambda^*}\mathcal R_0),
	\]
	which yields \cref{eq:2bapd-im-x-im-y-lw-rate}.

		It remains to verify the decay estimate \cref{eq:2bapd-im-x-im-y-lw-tk}.
		Since $		\alpha_k^2\nm{B}^2= \theta_{k}\beta_{k}$, we obtain
		\[
		\alpha_k = \sqrt{\beta_k\theta_k}/\nm{B}\leq 
		\sqrt{\beta_{0}}/\nm{B}
		\quad\Longrightarrow\quad 
		\frac{\theta_{k+1}}{\theta_k}=\frac{1}{1+\alpha_k}\geq \frac{\nm{B}}{\nm{B}+\sqrt{\beta_{0}}}.
		\]
		By \cref{eq:2bapd-tk,eq:est-bk-gk}, it holds that
		\begin{equation}\label{eq:diff-tk-2bapd-im-x-im-y-lw}
			\theta_{k+1}-\theta_{k} = -\alpha_k\theta_{k+1} = -\frac{\sqrt{\beta_k\theta_{k}}\theta_{k+1}}{\nm{B}}
			\leq -\frac{\sqrt{\beta_{0}}\theta_k\theta_{k+1}}{\nm{B}},
		\end{equation}
		and using \cref{lem:est-yk-case1} implies
		\begin{equation}\label{eq:2bapd-im-x-im-y-lw-tk1}
			\theta_k\leq  \frac{\nm{B}+\sqrt{\beta_{0}}}{\nm{B}+\sqrt{\beta_{0}}+\sqrt{\beta_{0}}k}.
		\end{equation}
		
		On the other hand, since $\beta_k\geq\mu_g$, \cref{eq:diff-tk-2bapd-im-x-im-y-lw}  becomes
		\[
		\theta_{k+1}-\theta_{k} \leq  -\frac{\sqrt{\mu_g\theta_{k}}\theta_{k+1}}{\nm{B}}
		\quad\overset{\text{by \cref{lem:est-yk-case1} }}{\Longrightarrow}\quad 
		\theta_k\leq \frac{4(\nm{B}+\sqrt{\beta_{0}})^2}{(2(\nm{B}+\sqrt{\beta_{0}})+\sqrt{\mu_g }k)^2}.
		\]
		Note that this estimate and the previous one \cref{eq:2bapd-im-x-im-y-lw-tk1} hold true simultaneously. This yields \cref{eq:2bapd-im-x-im-y-lw-tk} and concludes the proof of this theorem.
	\end{proof}
	
	In \cref{thm:conv-2bapd-im-x-im-y-lv}, we have established the same convergence rate for the objective residual and the feasibility violation. For the special case: $B = -I,\,b = 0$, the separable problem \cref{eq:min-f-g-A-B} is equivalent to the unconstrained composite optimization 
	\begin{equation}\label{eq:min-P}
		\min_{x\in \mathcal X} \,P(x): =  f(x)+g(Ax),
	\end{equation}
	where $P(x) = F(x,Ax)$ and the minimal value is $P^* = F^*$. As a corollary of \cref{thm:conv-2bapd-im-x-im-y-lv}, we can derive the convergence rate with respect to the composite objective in \cref{eq:min-P}.
	\begin{coro}\label{thm:conv-2bapd-obj-res}
		Assume 	$B = -I,\,b = 0$ and let $\{x_k\}_{k=1}^{\infty}\subset \mathcal X$ be generated by \cref{eq:2bapd-im-x-im-y-lv} under the assumptions of \cref{thm:conv-2bapd-im-x-im-y-lv}.
		If $g$ is $M_g$-Lipschitz continuous, then
		\begin{equation}\label{eq:convergence-unconstrained}
			0\leq P(x_k)-P^* \leq \theta_k\left(\mathcal E_0+(\nm{\lambda^*}+M_g)\mathcal R_0\right),
		\end{equation}
		where $\theta_{k}$ satisfies the decay estimate \cref{eq:2bapd-im-x-im-y-lw-tk}.
	\end{coro}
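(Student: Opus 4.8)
The plan is to read off the conclusion directly from \cref{thm:conv-2bapd-im-x-im-y-lv} and then transfer its estimates from the nearly-feasible pair $(x_k,y_k)$ to the single iterate $x_k$ using the Lipschitz continuity of $g$. First I would record the effect of the specialization $B=-I,\,b=0$: the linear constraint $Ax+By=b$ becomes $y=Ax$, so the feasibility residual reads $\nm{Ax_k+By_k-b}=\nm{Ax_k-y_k}$, while the composite objective satisfies $P(x_k)=f(x_k)+g(Ax_k)=F(x_k,Ax_k)$ together with $P^*=F^*$. This sets up a direct comparison between $P(x_k)$ and $F(x_k,y_k)$, whose discrepancy is governed entirely by how far $y_k$ is from $Ax_k$.

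Next I would split the target residual as
\[
P(x_k)-P^* = \bigl(g(Ax_k)-g(y_k)\bigr)+\bigl(F(x_k,y_k)-F^*\bigr).
\]
The second bracket is controlled immediately by the objective-residual bound $\snm{F(x_k,y_k)-F^*}\leq \theta_k(\mathcal E_0+\nm{\lambda^*}\mathcal R_0)$ supplied by \cref{thm:conv-2bapd-im-x-im-y-lv}. For the first bracket, the $M_g$-Lipschitz continuity of $g$ gives $\snm{g(Ax_k)-g(y_k)}\leq M_g\nm{Ax_k-y_k}$, and feeding in the feasibility estimate $\nm{Ax_k-y_k}\leq \theta_k\mathcal R_0$ of the same theorem yields $\snm{g(Ax_k)-g(y_k)}\leq M_g\theta_k\mathcal R_0$. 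Adding the two bounds produces the claimed upper estimate $P(x_k)-P^*\leq \theta_k\bigl(\mathcal E_0+(\nm{\lambda^*}+M_g)\mathcal R_0\bigr)$, and the decay rate \cref{eq:2bapd-im-x-im-y-lw-tk} for $\theta_k$ is inherited verbatim.

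For the lower bound I would invoke the inclusion $\{x_k\}_{k=1}^{\infty}\subset\mathcal X$ guaranteed by \cref{thm:conv-2bapd-im-x-im-y-lv}: since $P^*=\min_{x\in\mathcal X}P(x)$ and each $x_k$ is admissible, $P(x_k)\geq P^*$, i.e. $P(x_k)-P^*\geq 0$. I do not expect any genuinely hard analytic step here, as the corollary is a clean consequence of the feasibility and objective-residual estimates already established. The only point needing care is the transfer from the inexactly-feasible iterate $(x_k,y_k)$ to the exactly-feasible point $(x_k,Ax_k)$, which is exactly where the constant $M_g$ enters; one should note that an $M_g$-Lipschitz $g$ is finite-valued everywhere, so $g(Ax_k)$ and hence $P(x_k)$ are well defined.
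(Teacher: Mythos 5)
Your proof is correct and follows essentially the same route as the paper: the identical decomposition $P(x_k)-P^* = \bigl(g(Ax_k)-g(y_k)\bigr)+\bigl(F(x_k,y_k)-F^*\bigr)$, the Lipschitz bound $\snm{g(Ax_k)-g(y_k)}\leq M_g\nm{Ax_k-y_k}$, and the feasibility and objective-residual estimates \cref{eq:2bapd-im-x-im-y-lw-rate} from \cref{thm:conv-2bapd-im-x-im-y-lv}. The only difference is that you explicitly justify the lower bound $P(x_k)-P^*\geq 0$ via $\{x_k\}\subset\mathcal X$ and $P^*=\min_{x\in\mathcal X}P(x)$, a point the paper leaves implicit.
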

	\begin{proof}
		It follows that
		\[\begin{aligned}
			P(x_k)-P^* = {}&F(x_k,Ax_k)-F^*=g(Ax_k)-g(y_k)+F(x_k,y_k)-F^*\\
			\leq {}&\snm{g(Ax_k)-g(y_k)}+\snm{F(x_k,y_k)-F^*}\\
			\leq {}&M_g\nm{Ax_k-y_k}+\snm{F(x_k,y_k)-F^*}\\
			\leq{}&\theta_k\left(\mathcal E_0+(\nm{\lambda^*}+M_g)\mathcal R_0\right).
		\end{aligned}
		\]
		In the last step, we used \cref{eq:2bapd-im-x-im-y-lw-rate}. This concludes the proof.
	\end{proof}
	
	\begin{rem}
		Observing the proof of \cref{thm:conv-2bapd-obj-res}, the estimate \cref{eq:convergence-unconstrained} depends solely on $\nm{Ax_{k}-y_k}$ and $\snm{F(x_k,y_k)-F^*}$. Hence, we claim that it holds true for all the rest methods with the corresponding decay rate of $\theta_k$.
	\end{rem}

	\subsection{Another semi-implicit choice}
	\label{sec:ex-x-im-y-lbar-alter}
	As the roles of $(x,f,A)$ and $(y,g,B)$ are symmetric in \cref{eq:2bapd-im-x-im-y-lbar-arg}, the previous choice \cref{eq:2bapd-im-x-im-y-lbar-lv} is also equivalent to 
	\begin{equation}\label{eq:2bapd-im-x-im-y-lbar-lw}
		\barlk= \lambda_k+
		\alpha_k/\theta_k
		\left(Av_{k}+Bw_{k+1}-b\right),
	\end{equation}
	which leads to 
	\begin{equation}	\label{eq:2bapd-im-x-im-y-lw-arg}
		\left\{
		\begin{aligned}
			{}&			\widehat{\lambda}_k={} \lambda_k-\theta_k^{-1}\left(Ax_k+By_k-b\right)+\alpha_k/\theta_kA(v_k-x_k),\\
			{}&	y_{k+1}
			={}\mathop{\argmin}\limits_{y\in\mathcal Y}
			\left\{
			\mathcal L_{\sigma_k}(x_k,y,\widehat{\lambda}_k)
			+\frac{\eta_{g,k}}{2\alpha^2_k}\nm{y-\widetilde{y}_k}^2\right\},\quad \sigma_k = 1/\theta_{k+1},\\
			{}&	w_{k+1} ={} y_{k+1}+(y_{k+1}-y_k)/\alpha_k,\\
			{}&	\barlk= {}\lambda_k+
			\alpha_k/\theta_k
			\left(Av_{k}+Bw_{k+1}-b\right),\\		
			{}&	x_{k+1} = \prox_{s_kf}^{\mathcal X}(\widetilde{x}_k-s_kA^{\top}	\barlk),\quad s_k = \alpha_k^2/\eta_{f,k},\\
			{}&	v_{k+1} = x_{k+1}+(x_{k+1}-x_k)/\alpha_k,\\	
			{}&\lambda_{k+1}
			={}\lambda_k+\alpha_k/\theta_k(Av_{k+1}+Bw_{k+1}-b),
		\end{aligned}
		\right.
	\end{equation}
	where $(\widetilde{x}_k ,\widetilde{y}_k,\eta_{f,k},\eta_{g,k})$ are the same as that in \cref{eq:2bapd-im-x-im-y-lv} and the proximal operator $\prox_{\tau_kf}^{\mathcal X}$ of $f$ can be defined similarly as \cref{eq:prox-g}.
	
	Below, we state the convergence rate of \cref{eq:2bapd-im-x-im-y-lw-arg} but omit the detailed proof, which is almost identical to that of \cref{thm:conv-2bapd-im-x-im-y-lv}.
	\begin{thm}\label{thm:conv-2bapd-im-x-im-y-lw}
		Applying the choice \cref{eq:2bapd-im-x-im-y-lbar-lw} to \cref{eq:2bapd-im-x-im-y-lbar-arg} gives \cref{eq:2bapd-im-x-im-y-lw-arg}. In addition, 
		under \cref{assum:f-g}, the initial setting \cref{eq:b0-g0} and the condition $		\alpha_k^2\nm{A}^2= \gamma_{k}\theta_{k}$, we have $\{(x_k,y_k)\}_{k=1}^{\infty}\subset \mathcal X\times \mathcal Y$, and the estimate \cref{eq:2bapd-im-x-im-y-lw-rate} holds true with
		\[
		\theta_k\leq  
		\min\left\{\frac{Q}{Q+\sqrt{\gamma_{0}}k},\,\frac{4Q^2}{(2Q+\sqrt{\mu_f} k)^2}\right\},
		\]
		where $Q=\nm{A}+\sqrt{\gamma_0}$.
	\end{thm}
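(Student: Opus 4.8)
The plan is to reproduce the proof of \cref{thm:conv-2bapd-im-x-im-y-lv} almost verbatim, exploiting the block symmetry between $(x,f,A,v,\gamma,\mu_f)$ and $(y,g,B,w,\beta,\mu_g)$ in the underlying implicit scheme \cref{eq:2bapd-im-x-im-y-lbar}. Under this swap the choice \cref{eq:2bapd-im-x-im-y-lbar-lv} is carried into \cref{eq:2bapd-im-x-im-y-lbar-lw} and the reduced iteration \cref{eq:2bapd-im-x-im-y-lv} into \cref{eq:2bapd-im-x-im-y-lw-arg}; since the single-step estimate \cref{lem:2bapd-im-x-im-y-one-step} and the parameter relations \cref{eq:2bapd-tk,eq:est-bk-gk} are themselves invariant under the swap, the reduction asserted in the first sentence of the statement can be read off from the symmetry, and no fresh computation of $\mathbb I_1$--$\mathbb I_4$ is required.

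First I would record the mirror of the key identity \cref{eq:lk1-barlk-im-x-im-y-lw}: subtracting \cref{eq:2bapd-im-x-im-y-lbar-lw} from the dual update in \cref{eq:2bapd-im-x-im-y-lw-arg} gives $\lambda_{k+1}-\barlk=\alpha_k/\theta_k\,A(v_{k+1}-v_k)$. Feeding this into the one-step bound \cref{eq:2bapd-im-x-im-y-one-step} shows that the positive gain obeys $\frac{\theta_k}{2}\nm{\lambda_{k+1}-\barlk}^2=\frac{\alpha_k^2}{2\theta_k}\nm{A(v_{k+1}-v_k)}^2\leq\frac{\gamma_k}{2}\nm{v_{k+1}-v_k}^2$ exactly under $\alpha_k^2\nm{A}^2\leq\theta_k\gamma_k$; taking the equality $\alpha_k^2\nm{A}^2=\theta_k\gamma_k$ lets it be absorbed by the negative term $-\frac{\gamma_k}{2}\nm{v_{k+1}-v_k}^2$ and leaves the contraction $\mathcal E_{k+1}-\mathcal E_k\leq-\alpha_k\mathcal E_{k+1}$. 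Here $-\nm{v_{k+1}-v_k}^2$ plays the role that $-\nm{w_{k+1}-w_k}^2$ played before, which is precisely why the control now couples to $\nm{A}$ and $\gamma_k$ rather than $\nm{B}$ and $\beta_k$.

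From the contraction and the telescoping identity \cref{eq:tk-prod} I would deduce $\mathcal E_k\leq\theta_k\mathcal E_0$, hence the Lagrangian-gap line of \cref{eq:2bapd-im-x-im-y-lw-rate}, and then invoke the argument of \cite[Theorem 3.1]{luo_accelerated_2021} to convert this into the feasibility-violation and objective-residual bounds, exactly as in \cref{thm:conv-2bapd-im-x-im-y-lv}; this also yields $\{(x_k,y_k)\}_{k=1}^\infty\subset\mathcal X\times\mathcal Y$ from the proximal/argmin characterizations in \cref{eq:2bapd-im-x-im-y-lw-arg}. It remains to bound $\theta_k$: the constraint gives $\alpha_k=\sqrt{\gamma_k\theta_k}/\nm{A}\leq\sqrt{\gamma_0}/\nm{A}$, and inserting $\gamma_k\geq\theta_k\gamma_0$ (from \cref{eq:est-bk-gk}) and separately $\gamma_k\geq\mu_f$ into \cref{eq:2bapd-tk} produces the two recursions $\theta_{k+1}-\theta_k\leq-\sqrt{\gamma_0}\,\theta_k\theta_{k+1}/\nm{A}$ and $\theta_{k+1}-\theta_k\leq-\sqrt{\mu_f}\,\sqrt{\theta_k}\,\theta_{k+1}/\nm{A}$; applying \cref{lem:est-yk-case1} to each gives the two bounds whose minimum is the claimed estimate with $Q=\nm{A}+\sqrt{\gamma_0}$.

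I do not expect a genuine obstacle, since the statement is the exact mirror image of \cref{thm:conv-2bapd-im-x-im-y-lv}; the only point needing care is to confirm that the single-step estimate still applies despite the reversed sweep order (here $y_{k+1}$ is computed before $x_{k+1}$). This is safe because \cref{lem:2bapd-im-x-im-y-one-step} was established for the fully implicit scheme \cref{eq:2bapd-im-x-im-y-lbar} without committing to any update order, and the two choices \cref{eq:2bapd-im-x-im-y-lbar-lv,eq:2bapd-im-x-im-y-lbar-lw} are simply two admissible instantiations of the same $\barlk$. In other words, the transfer rests on a symmetry of the problem data rather than of the algorithm's sweep direction, which is what makes the argument go through with only notational changes.
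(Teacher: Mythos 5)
Your proposal is correct and is exactly the route the paper intends: the paper omits the detailed proof, stating it is almost identical to that of \cref{thm:conv-2bapd-im-x-im-y-lv}, and your mirrored argument (the identity $\lambda_{k+1}-\barlk=\alpha_k/\theta_k\,A(v_{k+1}-v_k)$, absorption of the gain into $-\frac{\gamma_k}{2}\nm{v_{k+1}-v_k}^2$ under $\alpha_k^2\nm{A}^2=\theta_k\gamma_k$, then \cref{lem:est-yk-case1} with $\gamma_k\geq\theta_k\gamma_0$ and $\gamma_k\geq\mu_f$) fills in precisely those details. Your observation that \cref{lem:2bapd-im-x-im-y-one-step} is order-agnostic, so the reversed sweep causes no issue, is also the correct justification for the transfer.
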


\subsection{The explicit choice}
\label{sec:ex-x-im-y-lk}
Now, let us consider the explicit one:
\begin{equation}\label{eq:2bapd-im-x-im-y-lbar-lk}
	\barlk= 
	\lambda_k+\alpha_k/\theta_k
	\left(Av_{k}+Bw_k-b\right),
\end{equation}
which yields the following method
\begin{subnumcases}{		\label{eq:2bapd-im-x-im-y-lk-arg}}
	\barlk= {}
	\lambda_k+\alpha_k/\theta_k
	\left(Av_{k}+Bw_k-b\right),\\
	x_{k+1} = \prox_{s_kf}^{\mathcal X}(\widetilde{x}_k-s_kA^{\top}		\barlk),\quad s_k = \alpha_k^2/\eta_{f,k},
	\label{eq:2bapd-im-x-im-y-lk-x-arg}\\
	v_{k+1} = x_{k+1}+(x_{k+1}-x_k)/\alpha_k,\\	
	y_{k+1} = \prox_{\tau_kg}^{\mathcal Y}(\widetilde{y}_k-\tau_kB^{\top}		\barlk),\quad \tau_k = \alpha_k^2/\eta_{g,k},
	\label{eq:2bapd-im-x-im-y-lk-y-arg}\\
	w_{k+1} = y_{k+1}+(y_{k+1}-y_k)/\alpha_k,\\	
	\lambda_{k+1}
	={}\lambda_k+\alpha_k/\theta_k(Av_{k+1}+Bw_{k+1}-b),
	\label{eq:2bapd-im-x-im-y-lk-l-arg}
\end{subnumcases}
where $(\widetilde{x}_k ,\widetilde{y}_k,\eta_{f,k},\eta_{g,k})$ are the same as that in \cref{eq:2bapd-im-x-im-y-lv}.
Note that \cref{eq:2bapd-im-x-im-y-lk-arg} is a parallel linearized proximal ADMM since the two proximal steps in \eqref{eq:2bapd-im-x-im-y-lk-x-arg} and \eqref{eq:2bapd-im-x-im-y-lk-y-arg} are independent.

Recall that $M\lesssim N$ means $M\leq CN$ with some generic bounded constant $C>0$ that is independent of $A,B,\mu_f,\mu_g,\gamma_0$ and $\beta_0$ but can be different in each occurrence. 
\begin{thm}\label{thm:conv-2bapd-im-x-im-y-lk}
	Applying the explicit choice \cref{eq:2bapd-im-x-im-y-lbar-lk} to \cref{eq:2bapd-im-x-im-y-lbar-arg} leads to \cref{eq:2bapd-im-x-im-y-lk-arg}. Under
	\cref{assum:f-g}, the initial setting \cref{eq:b0-g0} and the condition
	\begin{equation}\label{eq:2bapd-im-x-im-y-lk-ak}
		2\alpha_k^2(\beta_k\nm{A}^2+\gamma_k\nm{B}^2)= \gamma_k\beta_k\theta_{k},
	\end{equation}
	we have $\{(x_k,y_k)\}_{k=1}^{\infty}\subset \mathcal X\times \mathcal Y$ and $			\mathcal E_{k+1}-	\mathcal E_k
	\leq  -\alpha_k		\mathcal E_{k+1}$.
	Moreover, if  $\gamma_0\beta_0\leq 2\beta_0\nm{A}^2+2\gamma_0\nm{B}^2$, then the estimate \cref{eq:2bapd-im-x-im-y-lw-rate} holds true with
	\begin{equation}\label{eq:2bapd-im-x-im-y-lk-rate}
		\theta_k\lesssim \min\left\{\frac{\nm{A}}{\sqrt{\gamma_0}k},\,\frac{\nm{A}^2}{\mu_f k^2}\right\}+\min\left\{\frac{\nm{B}}{\sqrt{\beta_0}k},\,\frac{\nm{B}^2}{\mu_g k^2}\right\}.
	\end{equation}
\end{thm}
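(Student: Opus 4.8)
The plan is to feed the explicit choice \cref{eq:2bapd-im-x-im-y-lbar-lk} into the single-step estimate \cref{eq:2bapd-im-x-im-y-one-step} and then convert the resulting contraction into the advertised rates, exactly as in \cref{thm:conv-2bapd-im-x-im-y-lv}. The membership $\{(x_k,y_k)\}\subset\mathcal X\times\mathcal Y$ is immediate, since \cref{eq:2bapd-im-x-im-y-lk-x-arg} and \cref{eq:2bapd-im-x-im-y-lk-y-arg} are the proximal steps $\prox_{s_kf}^{\mathcal X}$ and $\prox_{\tau_kg}^{\mathcal Y}$, whose outputs lie in $\mathcal X$ and $\mathcal Y$ by \cref{eq:prox-g}.

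For the contraction, I would subtract the explicit $\barlk$ from the update \cref{eq:2bapd-im-x-im-y-lk-l-arg} to get the clean identity $\lambda_{k+1}-\barlk=(\alpha_k/\theta_k)\left(A(v_{k+1}-v_k)+B(w_{k+1}-w_k)\right)$, so that the only positive term in \cref{eq:2bapd-im-x-im-y-one-step} becomes $\frac{\theta_k}{2}\nm{\lambda_{k+1}-\barlk}^2=\frac{\alpha_k^2}{2\theta_k}\nm{A(v_{k+1}-v_k)+B(w_{k+1}-w_k)}^2$. Applying the elementary bound $\nm{a+b}^2\le 2\nm{a}^2+2\nm{b}^2$ together with $\nm{A\cdot}\le\nm{A}\nm{\cdot}$ splits this into a $\nm{v_{k+1}-v_k}^2$ piece and a $\nm{w_{k+1}-w_k}^2$ piece. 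Under \cref{eq:2bapd-im-x-im-y-lk-ak}, which is equivalent to $2\alpha_k^2\nm{A}^2/\gamma_k+2\alpha_k^2\nm{B}^2/\beta_k=\theta_k$ and hence forces each summand to be at most $\theta_k$, these two pieces are dominated respectively by $\frac{\gamma_k}{2}\nm{v_{k+1}-v_k}^2$ and $\frac{\beta_k}{2}\nm{w_{k+1}-w_k}^2$, yielding $\mathcal E_{k+1}-\mathcal E_k\le-\alpha_k\mathcal E_{k+1}$. By \cref{eq:tk-prod} this gives $\mathcal E_k\le\theta_k\mathcal E_0$, and the three bounds in \cref{eq:2bapd-im-x-im-y-lw-rate} then follow verbatim from the argument of \cite{luo_accelerated_2021} used in \cref{thm:conv-2bapd-im-x-im-y-lv}. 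So everything reduces to establishing the decay \cref{eq:2bapd-im-x-im-y-lk-rate}.

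For the decay I would first note that the extra hypothesis $\gamma_0\beta_0\le 2\beta_0\nm{A}^2+2\gamma_0\nm{B}^2$ guarantees $\alpha_k\le 1$ for every $k$: writing the step-size condition as $\alpha_k^2=\tfrac{\theta_k/2}{\nm{A}^2/\gamma_k+\nm{B}^2/\beta_k}$ and using $\theta_k\le 1$, $\gamma_k\le\gamma_0$, $\beta_k\le\beta_0$ bounds $\alpha_k^2$ by $\tfrac{1/2}{\nm{A}^2/\gamma_0+\nm{B}^2/\beta_0}\le 1$. Setting $\psi_k:=\theta_k^{-1/2}$ and using $\theta_{k+1}(1+\alpha_k)=\theta_k$ with $\sqrt{1+\alpha_k}-1\ge\alpha_k/3$ (valid since $\alpha_k\le1$), I obtain $\psi_{k+1}-\psi_k\gtrsim\alpha_k\psi_k=\alpha_k/\sqrt{\theta_k}$; substituting the step-size identity and $\sqrt{a+b}\le\sqrt a+\sqrt b$ turns this into the scalar recursion $\psi_{k+1}-\psi_k\gtrsim 1/S_k$, where $S_k:=\nm{A}/\sqrt{\gamma_k}+\nm{B}/\sqrt{\beta_k}$.

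The crux is then bounding $S_k$ so that the two curvatures separate. Since $\{\gamma_k\}$ and $\{\beta_k\}$ satisfy both $\gamma_k\ge\max\{\mu_f,\theta_k\gamma_0\}$ and $\beta_k\ge\max\{\mu_g,\theta_k\beta_0\}$ (the latter being \cref{eq:est-bk-gk}), each term of $S_k$ is controlled by its own minimum, $S_k\le\min\{\nm{A}/\sqrt{\mu_f},\psi_k\nm{A}/\sqrt{\gamma_0}\}+\min\{\nm{B}/\sqrt{\mu_g},\psi_k\nm{B}/\sqrt{\beta_0}\}$. Feeding this into $\psi_{k+1}-\psi_k\gtrsim 1/S_k$ and analyzing the recursion branch by branch via \cref{lem:est-yk-case1} — linear growth $\psi_k\sim k$ wherever a constant branch of a minimum is active, and square-root growth wherever the $\psi_k$-proportional branch is active — bounds $\theta_k=\psi_k^{-2}$ by the sum of minima \cref{eq:2bapd-im-x-im-y-lk-rate}. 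I expect this final step to be the main obstacle: the easy route of bounding $\gamma_k,\beta_k$ by a single global quantity only delivers the weaker minimum-of-sums $\min\{\tfrac{\nm{A}}{\sqrt{\gamma_0}k}+\tfrac{\nm{B}}{\sqrt{\beta_0}k},\,\tfrac{\nm{A}^2}{\mu_f k^2}+\tfrac{\nm{B}^2}{\mu_g k^2}\}$, whereas the genuinely sharper sum-of-minima requires keeping the $A$- and $B$-contributions of $S_k$ separate and letting each one select its own decay regime, so that a strongly convex block can retain its $O(1/k^2)$ rate even when the other block decays only at $O(1/k)$.
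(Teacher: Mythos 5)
Your treatment of the membership claim and of the contraction is exactly the paper's: the same identity $\lambda_{k+1}-\barlk=\alpha_k/\theta_k\,\big(A(v_{k+1}-v_k)+B(w_{k+1}-w_k)\big)$, the same Young-inequality split, and the same reading of \cref{eq:2bapd-im-x-im-y-lk-ak} as $2\alpha_k^2\nm{A}^2/\gamma_k+2\alpha_k^2\nm{B}^2/\beta_k=\theta_k$, which makes both square-norm coefficients nonpositive; the reduction of \cref{eq:2bapd-im-x-im-y-lw-rate} to $\mathcal E_k\le\theta_k\mathcal E_0$ is also identical. For the decay of $\theta_k$ your route is the same idea in different clothing: the paper stays with $\theta_k$, lower-bounds $\alpha_k$ using the four combinations $\gamma_k\ge\theta_k\gamma_0$ or $\gamma_k\ge\mu_f$ crossed with $\beta_k\ge\theta_k\beta_0$ or $\beta_k\ge\mu_g$, and feeds the resulting difference inequalities into \cref{lem:est-yk-case1} and \cref{lem:est-yk-case2}, then takes the minimum of the four resulting sums; since $\min_{i,j}(x_i+y_j)=\min_i x_i+\min_j y_j$, this \emph{is} the sum of minima, so your concluding "sharpness" discussion matches what the paper actually does. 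Two points in your sketch need tightening. First, "branch by branch \ldots wherever a branch is active" should not be read dynamically (tracking which branch of each minimum is active at each $k$); the clean argument is that all four upper bounds on $S_k$ hold \emph{uniformly} in $k$, so you get four recursions each valid for every $k$, solve each, and take the best bound. Second, \cref{lem:est-yk-case1} alone does not cover the mixed combinations: a recursion of the form $\psi_{k+1}-\psi_k\gtrsim 1/(c+d\,\psi_k)$ is the discrete analogue of \cref{lem:est-yk-case2}, which is the lemma the paper invokes there. In your $\psi_k=\theta_k^{-1/2}$ variables this can be repaired elementarily: since $\psi_k$ is increasing, $c(\psi_{k+1}-\psi_k)+\tfrac{d}{2}(\psi_{k+1}^2-\psi_k^2)\ge (c+d\psi_k)(\psi_{k+1}-\psi_k)\gtrsim 1$, and telescoping gives $c\psi_k+\tfrac{d}{2}\psi_k^2\gtrsim k$, hence $\theta_k=\psi_k^{-2}\lesssim c^2/k^2+d/k$, which is precisely the mixed-regime bound needed. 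With these two repairs your plan goes through and is, in substance, the paper's proof.
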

\begin{proof}
	By \cref{eq:2bapd-im-x-im-y-lbar-lk} and \eqref{eq:2bapd-im-x-im-y-lk-l-arg}, we have 
	\begin{equation}\label{eq:diff-lk-ex}
		\lambda_{k+1}-\barlk = \alpha_k/\theta_{k}A(v_{k+1}-v_k)+\alpha_k/\theta_{k}B(w_{k+1}-w_k).
	\end{equation}
	Taking this into the one-iteration estimate \cref{eq:2bapd-im-x-im-y-one-step} gives 
	\[
	\begin{aligned}
		\mathcal E_{k+1}-	\mathcal E_k
		\leq{}&  -\alpha_k		\mathcal E_{k+1}
		+\frac{2\alpha_k^2\nm{A}^2-\gamma_{k}\theta_k}{2\theta_k}\nm{v_{k+1}-v_k}^2
		+\frac{2\alpha_k^2\nm{B}^2-\beta_{k}\theta_k}{2\theta_k}\nm{w_{k+1}-w_k}^2,
	\end{aligned}
	\]
	and invoking the relation \cref{eq:2bapd-im-x-im-y-lk-ak}, we obtain the contraction $			\mathcal E_{k+1}-	\mathcal E_k
	\leq  -\alpha_k		\mathcal E_{k+1}$. By using the proof of 	\cref{thm:conv-2bapd-im-x-im-y-lv}, the estimate \cref{eq:2bapd-im-x-im-y-lw-rate} can still be verified. 
	
	Let us prove the mixed-type estimate \cref{eq:2bapd-im-x-im-y-lk-rate}. 
	Since $\beta_k\leq \beta_0,\,\gamma_k\leq \gamma_0$ and $\theta_k\leq 1$, by \cref{eq:2bapd-im-x-im-y-lk-ak}, we have
	\[
	\alpha_k 
	\leq 
	\frac{\sqrt{\gamma_0\beta_0}}{\sqrt{2\beta_0\nm{A}^2+2\gamma_0\nm{B}^2}}:=\sigma\leq 1,
	\]
	and it follows that $\theta_{k+1}/\theta_k = 1/(1+\alpha_k)\geq 1/2$.
	Analogously to \cref{eq:diff-tk-2bapd-im-x-im-y-lw}, one has 
	\[
	\theta_{k+1}-\theta_k =-\alpha_k\theta_{k+1}
	\overset{\text{by \cref{eq:est-bk-gk,eq:2bapd-im-x-im-y-lk-ak}}}{\leq}
	-\sigma\theta_k\theta_{k+1}
	\quad\overset{\text{by \cref{lem:est-yk-case1} }}{\Longrightarrow}\quad 
	\theta_k
	\lesssim\frac{\nm{A}}{\sqrt{\gamma_0}k}+\frac{\nm{B}}{ \sqrt{\beta_0}k}.
	\]
	On the other hand, as $\beta_k\geq \mu_g$, we obtain
	\[
	\theta_{k+1}-\theta_{k}
	\overset{\text{by \cref{eq:2bapd-im-x-im-y-lk-ak}}}{\leq} -\frac{\sqrt{\gamma_0\mu_g}\theta_{k}\theta_{k+1}}{\sqrt{2\mu_g\nm{A}^2+2\gamma_0\theta_k\nm{B}^2}}
	\quad\overset{\text{by \cref{lem:est-yk-case2} }}{\Longrightarrow}\quad 
	\theta_k\lesssim \frac{\nm{A}}{\sqrt{\gamma_0}k} + \frac{\nm{B}^2}{\mu_gk^2}.
	\]
	Consequently, we have 
	\begin{equation}\label{eq:2bapd-im-x-im-y-lk-rate1}
		\theta_k\lesssim \frac{\nm{A}}{\sqrt{\gamma_0}k}+\min\left\{\frac{\nm{B}}{\sqrt{\beta_0}k},\,\frac{\nm{B}^2}{\mu_gk^2}\right\}.
	\end{equation}
	
	Moreover, since $\gamma_k \geq \mu_f$, repeating the above discussions and using \cref{lem:est-yk-case1,lem:est-yk-case2}, we conclude that
	\[
	\theta_k\lesssim \frac{\nm{A}^2}{\mu_f k^2}+\min\left\{\frac{\nm{B}}{\sqrt{\beta_0}k},\,\frac{\nm{B}^2}{\mu_gk^2}\right\}.
	\]
	Therefore, combining this with \cref{eq:2bapd-im-x-im-y-lk-rate1} leads to \cref{eq:2bapd-im-x-im-y-lk-rate} and completes the proof.
	%
\end{proof}
\begin{rem}
	Our method \cref{eq:2bapd-im-x-im-y-lk-arg} is close to the parallel type ADMM, such as the predictor corrector proximal multipliers (PCPM) method \cite{chen_proximal-based_1994}, the proximal-center based decomposition method (PCBDM) \cite{necoara_application_2008} and the decomposition algorithms in \cite{trandinh_combining_2013}. However, it is rare to see mixed-type estimates like \cref{eq:2bapd-im-x-im-y-lk-rate}, especially for partially strongly convex case $\mu_f+\mu_g>0$.
\end{rem}

\section{The Second Family of Methods}
\label{sec:2bapd-ex-x-im-y}
We then focus on the second class of primal-dual splitting methods that apply semi-implicit and implicit discretizations to $x$ and $y$, separately, and also consider different discretizations for $\lambda$ as before.

To do this, let us start from the following scheme
\begin{subnumcases}{}
	\label{eq:2bapd-ex-x-im-y-lbar-x}
	\frac{x_{k+1}-x_k}{\alpha_k}=v_{k} - x_{k+1},\\
	\label{eq:2bapd-ex-x-im-y-lbar-v}
	\gamma_{k} \frac{v_{k+1}-v_k}{\alpha_k} \in{}\mu_f(x_{k+1}-v_{k+1})- \partial_x\mathcal L(x_{k+1},y_{k+1},\barlk),
	\\
	\theta_{k} \frac{\lambda_{k+1}-\lambda_k}{\alpha_k} = {}\nabla_\lambda \mathcal L(v_{k+1} ,w_{k+1} ,\lambda_{k+1}),		\label{eq:2bapd-ex-x-im-y-lbar-l}\\
	\beta_{k}\frac{w_{k+1}-w_k}{\alpha_k} \in {}\mu_g(y_{k+1}-w_{k+1})- \partial_y\mathcal L(x_{k+1},y_{k+1},\barlk),
	\label{eq:2bapd-ex-x-im-y-lbar-w}\\
	\label{eq:2bapd-ex-x-im-y-lbar-y}
	\frac{y_{k+1}-y_k}{\alpha_k}=w_{k+1} - y_{k+1},
\end{subnumcases}
where $\barlk$ is to be determined and the parameter system 
\cref{eq:2bapd-scaling} is still discretized by \cref{eq:2bapd-tk}. 
Note that $x_{k+1}$ is calculated easily from \eqref{eq:2bapd-ex-x-im-y-lbar-x}, and to update $v_{k+1}$ via \eqref{eq:2bapd-ex-x-im-y-lbar-v}, one has to compute the subgradient $p_{k+1}\in\partial f(x_{k+1})$. However, as a convex combination of $x_k$ and $v_k$, $x_{k+1}$ might be outside the constraint set $\mathcal X$ since \eqref{eq:2bapd-ex-x-im-y-lbar-v} cannot promise $\{v_k\}_{k=1}^\infty\subset\mathcal X$. 

To avoid this, we apply implicit discretization to $\partial f$. Or more generally, we consider the composite case $f = f_1+f_2$ with $f_1\in\mathcal S_{\mu_f,L_f}^{1,1}(\mathcal X)$ and $f_2\in\mathcal S_0^0(\mathcal X)$. Therefore, in this section, we impose the following assumption.
\begin{assum}
	\label{assum:ex-x-im-y}
	$g\in\mathcal S_{\mu_g}^0(\mathcal Y)$ with $\mu_g\geq 0$ and 
	$f = f_1+f_2$ where $f_2\in\mathcal S_0^0(\mathcal X)$ and $f_1\in\mathcal S_{\mu_f,L_f}^{1,1}(\mathcal X)$ with $0\leq \mu_f\leq L_f<\infty$.
\end{assum}

To utilize the separable structure of $f$, we adopt the operator splitting technique and to promise the contraction of the Lyapunov function $\mathcal E_k$, we borrow the correction idea from \cite[Section 7.3]{luo_differential_2021} and propose the following modified scheme
\begin{subnumcases}{	\label{eq:2bapd-ex-x-im-y-lbar-correc}}
	\label{eq:2bapd-ex-x-im-y-lbar-u-correc}
	\frac{u_{k}-x_k}{\alpha_k}=v_{k} - u_{k},\\
	\label{eq:2bapd-ex-x-im-y-lbar-v-correc}
	\gamma_{k} \frac{v_{k+1}-v_k}{\alpha_k} \in{}\mu_f(u_{k}-v_{k+1})- \mathcal G_x(u_k,v_{k+1},\barlk),
	\\
	\label{eq:2bapd-ex-x-im-y-lbar-x-correc}
	\frac{x_{k+1}-x_k}{\alpha_k}=v_{k+1} - x_{k+1},\\
	\theta_{k} \frac{\lambda_{k+1}-\lambda_k}{\alpha_k} = {}\nabla_\lambda \mathcal L(v_{k+1} ,w_{k+1} ,\lambda_{k+1}),	\label{eq:2bapd-ex-x-im-y-lbar-l-correc}\\	
	\beta_{k}\frac{w_{k+1}-w_k}{\alpha_k} \in {}\mu_g(y_{k+1}-w_{k+1})- \partial_y\mathcal L(x_{k+1},y_{k+1},\barlk),
	\label{eq:2bapd-ex-x-im-y-lbar-w-correc}\\
	\frac{y_{k+1}-y_k}{\alpha_k}=w_{k+1} - y_{k+1},
	\label{eq:2bapd-ex-x-im-y-lbar-y-correc}
\end{subnumcases}
where $\mathcal G_x(u_k,v_{k+1},\barlk)=\nabla f_1(u_k)+\partial f_2(v_{k+1})+A^{\top}\barlk+N_{\mathcal X}(v_{k+1})$. Above, we replaced $x_{k+1}$ in \eqref{eq:2bapd-ex-x-im-y-lbar-x} and \eqref{eq:2bapd-ex-x-im-y-lbar-v} by $u_k$ and updated it by \eqref{eq:2bapd-ex-x-im-y-lbar-x-correc}, which is an extra correction step.
Similarly with \cref{eq:2bapd-im-x-im-y-lbar-arg}, we have an informal primal-dual formulation:
\begin{subnumcases}{	\label{eq:2bapd-ex-x-im-y-lbar-arg}}
	u_{k} = (x_k+\alpha_kv_{k})/(1+\alpha_k),
	\label{eq:2bapd-ex-x-im-y-lbar-u-arg}\\	
	v_{k+1}
	=\mathop{\argmin}\limits_{v\in\mathcal X}
	\left\{
	f_2(v)+\dual{\nabla f_1(u_k)+A^{\top}\barlk,v}
	+\frac{\widetilde{\eta}_{f,k}}{2\alpha_k}\nm{v-\widetilde{v}_k}^2\right\},
	\label{eq:2bapd-ex-x-im-y-lbar-v-arg}\\
	x_{k+1} = (x_k+\alpha_kv_{k+1})/(1+\alpha_k),
	\label{eq:2bapd-ex-x-im-y-lbar-x-arg}\\	
	y_{k+1}
	={}\mathop{\argmin}\limits_{y\in\mathcal Y}
	\left\{
	g(y) + \dual{By,\barlk}
	+\frac{\eta_{g,k}}{2\alpha^2_k}\nm{y-\widetilde{y}_k}^2\right\},
	\label{eq:2bapd-ex-x-im-y-lbar-y-arg}\\
	w_{k+1} = y_{k+1}+(y_{k+1}-y_k)/\alpha_k,\\
	\lambda_{k+1}
	=\lambda_k+\alpha_k/\theta_{k}\left(Av_{k+1}+Bw_{k+1}-b\right),
	\label{eq:2bapd-ex-x-im-y-lbar-l-argmax}
\end{subnumcases}
where $(\widetilde{y}_k,\eta_{g,k})$ are the same as that in \cref{eq:2bapd-im-x-im-y-lbar-arg} and 
\[
\widetilde{v}_k =\frac{1}{\widetilde{\eta}_{f,k}}(\gamma_kv_k+\mu_f\alpha_ku_k)\quad\text{with}\quad 	\widetilde{\eta}_{f,k}:=\gamma_k+\mu_f\alpha_k.
\]
To compute $\nabla f_1(u_k)$, the step \eqref{eq:2bapd-ex-x-im-y-lbar-v-arg} needs $u_k\in\mathcal X$. In view of \eqref{eq:2bapd-ex-x-im-y-lbar-u-arg}, this is true if $(x_k,v_k)\in\mathcal X\times \mathcal X$. Thanks to the correction  \eqref{eq:2bapd-ex-x-im-y-lbar-x-arg}, we conclude that $\{(x_{k},u_k,v_k)\}_{k=1}^{\infty}\subset\mathcal X\times\mathcal X$  as long as $(x_0,v_0)\in\mathcal X\times\mathcal X$.

Similarly with the previous section, different choices of $\barlk$ (cf. \cref{eq:2bapd-im-x-im-y-lbar-lv,eq:2bapd-im-x-im-y-lbar-lw,eq:2bapd-im-x-im-y-lbar-lk}) result in our second family of methods. One thing that we shall emphasis is, the first class of methods in \cref{sec:2bapd-im-x-im-y} require {\it no} correction step since both $x$ and $y$ are discretized implicitly. However, all the methods in this section consider semi-implicit discretization for $x$ and thus need proper correction (cf.\eqref{eq:2bapd-ex-x-im-y-lbar-x-correc}) to promise the contraction property of the discrete Lyapunov function \cref{eq:Ek-apd}. By symmetry, the second class of methods can be easily rewritten and applied to the case $F(x,y)= f(x)+\big(g_1(y)+g_2(y)\big)$. For simplicity, we omit the detailed presentations.
\subsection{The one-iteration estimate}
Analogously to \cref{lem:2bapd-im-x-im-y-one-step}, we establish the one-iteration analysis in \cref{lem:2bapd-ex-x-im-y-one-step}, which helps us prove the nonergodic rates of the second family of methods.
\begin{lem}\label{lem:2bapd-ex-x-im-y-one-step}
	Let $k$ be fixed. For the scheme \cref{eq:2bapd-ex-x-im-y-lbar-correc} with \cref{assum:ex-x-im-y} and $(x_k,v_k)\in\mathcal X\times\mathcal X$, we have $(u_k,x_{k+1},v_{k+1})\in\mathcal X\times\mathcal X\times\mathcal X$ and
	\begin{equation}\label{eq:2bapd-ex-x-im-y-one-step}
		\begin{aligned}
			\mathcal E_{k+1}-\mathcal E_{k}
			\leq &	-\alpha_k\mathcal E_{k+1}
			+
			\frac{L_f\alpha_k^2\theta_{k+1}-\gamma_{k}\theta_k}{2\theta_k}
			\nm{v_{k+1}-v_k}^2	
			\\
			{}&\qquad	
			+	\frac{\theta_{k}}{2}\nm{\lambda_{k+1}-\barlk}^2
			-	\frac{\beta_{k}}2\nm{w_{k+1}-w_k}^2	.
		\end{aligned}
	\end{equation}
\end{lem}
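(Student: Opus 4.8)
The plan is to reproduce the four--term splitting $\mathcal E_{k+1}-\mathcal E_k=\mathbb I_1+\mathbb I_2+\mathbb I_3+\mathbb I_4$ of \cref{eq:2bapd-im-x-im-y-I1-I4} used in \cref{lem:2bapd-im-x-im-y-one-step}, and to exploit that the $\lambda$--update \cref{eq:2bapd-ex-x-im-y-lbar-l-correc} and the $(y,w)$--updates \cref{eq:2bapd-ex-x-im-y-lbar-w-correc}--\cref{eq:2bapd-ex-x-im-y-lbar-y-correc} are literally identical to their counterparts in \cref{eq:2bapd-im-x-im-y-lbar}. Hence the dual block $\mathbb I_2$ and the $w$--block $\mathbb I_4$ obey, word for word, the bounds \cref{eq:I2-2bapd-im-x-im-y} and \cref{eq:I4-2bapd-im-x-im-y}, and in particular they already deliver the terms $\tfrac{\theta_k}{2}\nm{\lambda_{k+1}-\barlk}^2$ and $-\tfrac{\beta_k}{2}\nm{w_{k+1}-w_k}^2$ of \cref{eq:2bapd-ex-x-im-y-one-step}. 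All the new difficulty is therefore located in the $x$--block, i.e. in $\mathbb I_3$ together with the $x$--part of $\mathbb I_1$, where the semi--implicit step forces $\nabla f_1$ to be sampled at $u_k$ while the nonsmooth data $f_2$ and $N_{\mathcal X}$ are sampled at $v_{k+1}$, yet the Lyapunov function \cref{eq:Ek-apd} and the target $-\alpha_k\mathcal E_{k+1}$ ask for values at $x_{k+1}$.

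First I would record from \cref{eq:2bapd-ex-x-im-y-lbar-v-correc} the inclusion $p_{k+1}:=\mu_f(u_k-v_{k+1})-\gamma_k(v_{k+1}-v_k)/\alpha_k\in\mathcal G_x(u_k,v_{k+1},\barlk)$, so that $p_{k+1}=\nabla f_1(u_k)+\eta+A^{\top}\barlk$ with $\eta\in\partial f_2(v_{k+1})+N_{\mathcal X}(v_{k+1})$. Expanding $\mathbb I_3$ exactly as in \cref{eq:I3-2bapd-im-x-im-y-mid} by means of \cref{eq:2bapd-tk}, the inclusion above, and the identity \cref{eq:x-y-z}, one arrives at $\mathbb I_3=-\tfrac{\alpha_k\gamma_{k+1}}{2}\nm{v_{k+1}-x^*}^2-\tfrac{\gamma_k}{2}\nm{v_{k+1}-v_k}^2+\tfrac{\alpha_k\mu_f}{2}\nm{u_k-x^*}^2-\tfrac{\alpha_k\mu_f}{2}\nm{u_k-v_{k+1}}^2-\alpha_k\dual{p_{k+1},v_{k+1}-x^*}$, the only change from the smooth proof being that the strong--convexity cross term is now anchored at $u_k$ rather than at $x_{k+1}$.

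The crux is to dispose of the inner product $-\alpha_k\dual{p_{k+1},v_{k+1}-x^*}$ and to build the objective part of $-\alpha_k\mathcal E_{k+1}$, and here the smooth and the nonsmooth data must be treated differently. For the smooth and linear parts $\nabla f_1(u_k)+A^{\top}\barlk$ I would split $v_{k+1}-x^*=(v_{k+1}-x_{k+1})+(x_{k+1}-x^*)$ and use $v_{k+1}-x_{k+1}=(x_{k+1}-x_k)/\alpha_k$ from \cref{eq:2bapd-ex-x-im-y-lbar-x-correc}; the descent inequality \cref{eq:def-L} for $f_1$ at $u_k$ and the strong--convexity inequality \cref{eq:def-mu} then match these against $f_1(x_{k+1})-f_1(x_k)$ and $\alpha_k(f_1(x_{k+1})-f_1(x^*))$, at the cost of two Lipschitz squares $\tfrac{L_f}{2}\nm{x_{k+1}-u_k}^2$ and $\tfrac{\alpha_k L_f}{2}\nm{x_{k+1}-u_k}^2$, whose sum carries the decisive factor $1+\alpha_k$. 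For the nonsmooth part $\eta\in\partial(f_2+\delta_{\mathcal X})(v_{k+1})$ the subgradient inequality points the wrong way at $x_{k+1}$, so instead I would invoke the correction step: since \cref{eq:2bapd-ex-x-im-y-lbar-x-arg} writes $x_{k+1}=(x_k+\alpha_k v_{k+1})/(1+\alpha_k)$ as a convex combination of $x_k$ and $v_{k+1}\in\mathcal X$, convexity gives $(1+\alpha_k)(f_2+\delta_{\mathcal X})(x_{k+1})\leq(f_2+\delta_{\mathcal X})(x_k)+\alpha_k(f_2+\delta_{\mathcal X})(v_{k+1})$, so that the $f_2$--content of $\mathcal E_{k+1}-\mathcal E_k+\alpha_k\mathcal E_{k+1}$ is bounded by $\alpha_k\dual{\eta,v_{k+1}-x^*}$, which cancels the $\eta$--component of $-\alpha_k\dual{p_{k+1},v_{k+1}-x^*}$.

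Finally I would collect the pieces. The $\tfrac{\alpha_k\mu_f}{2}\nm{u_k-x^*}^2$ of $\mathbb I_3$ cancels the strong--convexity deficit $-\tfrac{\alpha_k\mu_f}{2}\nm{u_k-x^*}^2$ produced when comparing $f_1(x_{k+1})$ to $f_1(x^*)$, while $-\tfrac{\alpha_k\mu_f}{2}\nm{u_k-v_{k+1}}^2$ and the residual $-\tfrac{\mu_f}{2}\nm{x_k-u_k}^2$ are nonpositive and discarded; the linear $A^{\top}\barlk$ terms combine with $\mathbb I_2$ and the shift $\barlk\mapsto\lambda^*$ through $Ax^*+By^*=b$ exactly as in \cref{lem:2bapd-im-x-im-y-one-step}, and the $y$--block closes by \cref{eq:ineq-mu} as before. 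The total smooth error $\tfrac{(1+\alpha_k)L_f}{2}\nm{x_{k+1}-u_k}^2$ is then converted through the correction identity $x_{k+1}-u_k=\tfrac{\alpha_k}{1+\alpha_k}(v_{k+1}-v_k)$, read off from \cref{eq:2bapd-ex-x-im-y-lbar-u-arg} and \cref{eq:2bapd-ex-x-im-y-lbar-x-arg}, together with $\theta_{k+1}/\theta_k=1/(1+\alpha_k)$ from \cref{eq:tk-prod}, giving precisely $\tfrac{L_f\alpha_k^2\theta_{k+1}}{2\theta_k}\nm{v_{k+1}-v_k}^2$; adding the surviving $-\tfrac{\gamma_k}{2}\nm{v_{k+1}-v_k}^2$ yields the coefficient in \cref{eq:2bapd-ex-x-im-y-one-step}. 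I expect the main obstacle to be exactly this split treatment of the $x$--block: the subgradient of the nonsmooth part lives at $v_{k+1}$ and cannot be played against $x_{k+1}$ directly, so the correction step (and the convex--combination inequality it enables) is indispensable, and it is only after summing the two smooth comparisons that the factor $1+\alpha_k=\theta_k/\theta_{k+1}$ surfaces and matches the stated coefficient.
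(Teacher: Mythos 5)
Your proposal is correct and takes essentially the same approach as the paper's proof: the identical four-term splitting with $\mathbb I_2$ and $\mathbb I_4$ inherited from \cref{lem:2bapd-im-x-im-y-one-step}, the convex-combination bound $(1+\alpha_k)f_2(x_{k+1})\leq f_2(x_k)+\alpha_k f_2(v_{k+1})$ enabled by the correction step, the descent lemma for $f_1$ anchored at $u_k$ with total cost $\tfrac{(1+\alpha_k)L_f}{2}\nm{x_{k+1}-u_k}^2$, and the conversion via $x_{k+1}-u_k=\tfrac{\alpha_k}{1+\alpha_k}(v_{k+1}-v_k)$ together with $\theta_k=(1+\alpha_k)\theta_{k+1}$. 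The only cosmetic difference is that you telescope the cross term $-\alpha_k\dual{p_{k+1},v_{k+1}-x^*}$ through $x_{k+1}$ using \cref{eq:2bapd-ex-x-im-y-lbar-x-correc}, whereas the paper's \cref{app:ex-x-im-y-I3} routes it through $u_k$ via \cref{eq:2bapd-ex-x-im-y-lbar-u-correc} and then shifts $u_k$ to $x_{k+1}$, with the $A^{\top}\barlk$ cross terms cancelling by \cref{eq:xk1-uk}; both organizations produce the same final estimate.
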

\begin{proof}
	As before, we calculate the difference $	\mathcal E_{k+1}-	\mathcal E_k=\mathbb I_1+\mathbb I_2+\mathbb I_3+\mathbb I_4$ with $\mathbb I_1,\mathbb I_2,\mathbb I_3$ and $\mathbb I_4$ being defined in \cref{eq:2bapd-im-x-im-y-I1-I4}.
	
	Expand the first term $\mathbb I_1$ as follows
	\begin{equation*}
		\begin{aligned}
			\mathbb I_1 = f(x_{k+1})-f(x_k)+g(y_{k+1})-g(y_k)
			+\dual{\lambda^*,A(x_{k+1}-x_k)+B(y_{k+1}-y_k)},
		\end{aligned}
	\end{equation*}
	and then duplicate the estimate \cref{eq:I2-2bapd-im-x-im-y}:
	\begin{equation*}
		\begin{aligned}
			\mathbb I_2	
			\leq {}&\alpha_{k}\dual{Av_{k+1}+Bw_{k+1}-b, \barlk-\lambda^*}
			+\frac{\theta_{k}}{2}\nm{\lambda_{k+1}-\barlk}^2	-\frac{\alpha_k\theta_{k+1}}{2}\nm{\lambda_{k+1}-\lambda^*}^2.
		\end{aligned}
	\end{equation*}
	In addition, we claim that the relation \cref{eq:I4-2bapd-im-x-im-y} holds true here:
	\[
	\begin{aligned}
		\mathbb I_4=&-
		\frac{\alpha_k \beta_{k+1}}{2}
		\nm{w_{k+1}-y^*}^2-	\frac{\beta_{k}}2\nm{w_{k+1}-w_k}^2
		-\frac{\mu_g\alpha_k}{2}\nm{y_{k+1}-w_{k+1}}^2
		\\
		{}&\quad+
		\frac{\mu_g\alpha_k}{2}
		\nm{y_{k+1}-y^*}^2
		-\alpha_k
		\dual{q_{k+1}, y_{k+1} -y^*}
		-\dual{q_{k+1}, y_{k+1} -y_k},
	\end{aligned}
	\]
	where $q_{k+1}\in\partial_y\mathcal L(x_{k+1},y_{k+1},\barlk)$ has been defined by \cref{eq:qk1-2bapd-im-x-im-y-lbar}. By \cref{eq:ineq-mu} and the fact $y_{k+1}\in\mathcal Y$ (cf. \eqref{eq:2bapd-ex-x-im-y-lbar-y-arg}), we obtain that
	\[
	\begin{aligned}
		{}&	\frac{\mu_g\alpha_k}{2}
		\nm{y_{k+1}-y^*}^2
		-\alpha_k
		\dual{q_{k+1}, y_{k+1} -y^*}
		-\dual{q_{k+1}, y_{k+1} -y_k}\\
		\leq 	{}&\alpha_k\left[g(y^*)-g(y_{k+1}) + \dual{\bar{\lambda}_{k+1},B(y^*-y_{k+1})}\right]
		+ g(y_k)-g(y_{k+1}) + \dual{\bar{\lambda}_{k+1},B(y_k-y_{k+1})}.
	\end{aligned}
	\]
	Dropping the negative square term $-\nm{y_{k+1}-w_{k+1}}^2$ and shifting $\barlk$ to $\lambda^*$, we get
	\[
	\begin{aligned}
		\mathbb I_4\leq {}& \alpha_k\big[g(y^*)-g(y_{k+1})
		+\dual{\lambda^*,B(y^*-y_{k+1})}\big]
		-	\frac{\alpha_k \beta_{k+1}}{2}
		\nm{w_{k+1}-y^*}^2	\\
		{}&\quad	-\frac{\beta_{k}}2\nm{w_{k+1}-w_k}^2
		+\alpha_k\dual{\barlk-\lambda^*,B(y^*-y_{k+1})}\\
		{}&\qquad+g(y_k)-g(y_{k+1})+\dual{\barlk,B(y_k-y_{k+1})}.
	\end{aligned}
	\]
	The estimate for $\mathbb I_3$ starts from \cref{eq:I3-2bapd-im-x-im-y-mid} but 
	is more subtle. We list the desired result below:
	\begin{equation}\label{eq:I3-2bapd-ex-x-im-y}
		\begin{aligned}
			\mathbb I_3
			\leq {}&	\alpha_k\big[f(x^*)-f(x_{k+1})+\dual{\lambda^*,A(x^*-x_{k+1})}\big]
			-\frac{\alpha_k\gamma_{k+1}}{2}
			\nm{v_{k+1}-x^*}^2	\\
			&\quad+f_1(x_k)-f_1(x_{k+1})-\alpha_k(f_2(v_{k+1})-f_2(x_{k+1}))	-	\frac{\gamma_{k}}2\nm{v_{k+1}-v_k}^2\\
			{}& \qquad+(1+\alpha_k)\left(f_1(x_{k+1})-f_1(u_k)\right)
			-\alpha_k\dual{\nabla f_1(u_k),v_{k+1}-v_k}\\
			{}&\quad \qquad+\alpha_k\dual{\barlk-\lambda^*,A(x^*-x_{k+1})}+\dual{\barlk,A(x_k-x_{k+1})}	.
		\end{aligned}
	\end{equation}
	The detailed proof can be found in \cref{app:ex-x-im-y-I3}. 
	Consequently, combining these estimates from $\mathbb I_1$ to $\mathbb I_4$ gives
	\begin{equation}\label{eq:diff-Ek-ex-x-im-y}
		\begin{aligned}
			\mathcal E_{k+1}-\mathcal E_{k}
			\leq &	-\alpha_k\mathcal E_{k+1}
			+(1+\alpha_k)f_2(x_{k+1})-f_2(x_{k})	-\alpha_kf_2(v_{k+1})
			\\
			{}&	\quad	+(1+\alpha_k)\left(f_1(x_{k+1})-f_1(u_{k})\right)	-\alpha_k\dual{\nabla f_1(u_k),v_{k+1} - v_k}\\
			{}&	\qquad	+	\frac{\theta_{k}}{2}\nm{\lambda_{k+1}-\barlk}^2
			-	\frac{\gamma_{k}}2\nm{v_{k+1}-v_k}^2
			-	\frac{\beta_{k}}2\nm{w_{k+1}-w_k}^2	.
		\end{aligned}
	\end{equation}
	
	Notice that by \eqref{eq:2bapd-ex-x-im-y-lbar-x-arg}, $x_{k+1}$ is a convex combination of $x_k$ and $v_{k+1}$ and 
	\begin{equation}\label{eq:est-f2-ex-x-im-y}
		(1+\alpha_k)f_2(x_{k+1})-f_2(x_{k})	-\alpha_kf_2(v_{k+1})\leq 0.
	\end{equation}
	By \cref{eq:def-L} and \cref{assum:ex-x-im-y}, it follows immediately that
	\[
	f_1(x_{k+1})-f_1(u_k)
	\leq {}\dual{\nabla f_1(u_k),x_{k+1}-u_k}+\frac{L_f}{2}\nm{x_{k+1}-u_k}^2.
	\]
	Besides, by \eqref{eq:2bapd-ex-x-im-y-lbar-u-arg} and \eqref{eq:2bapd-ex-x-im-y-lbar-x-arg} we have 
	\begin{equation}\label{eq:xk1-uk}
		x_{k+1}-u_k = \alpha_k(v_{k+1}-v_k)/(1+\alpha_k),
	\end{equation}
	which implies 
	\begin{equation}\label{eq:est-ex-x-im-y-f1}
		\begin{aligned}
			{}&(1+\alpha_k)\left(f_1(x_{k+1})-f_1(u_{k})\right)	-\alpha_k\dual{\nabla f_1(u_k),v_{k+1} - v_k}
			\leq 
			\frac{L_f\alpha_k^2}{2+2\alpha_k}
			\nm{v_{k+1}-v_k}^2.
		\end{aligned}
	\end{equation}
	Therefore, plugging \cref{eq:est-f2-ex-x-im-y,eq:est-ex-x-im-y-f1} into \cref{eq:diff-Ek-ex-x-im-y} gives 
	\[
	\begin{aligned}
		\mathcal E_{k+1}-\mathcal E_{k}
		\leq &	-\alpha_k\mathcal E_{k+1}
		+	\frac{L_f\alpha_k^2-\gamma_{k}(1+\alpha_k)}{2+2\alpha_k}
		\nm{v_{k+1}-v_k}^2
		+	\frac{\theta_{k}}{2}\nm{\lambda_{k+1}-\barlk}^2
		-	\frac{\beta_{k}}2\nm{w_{k+1}-w_k}^2.
	\end{aligned}
	\]
	In view of the relation $\theta_{k} = \theta_{k+1}(1+\alpha_k)$, we obtain \cref{eq:2bapd-ex-x-im-y-one-step}
	and finish the proof.
\end{proof}

\subsection{The semi-implicit choice \cref{eq:2bapd-im-x-im-y-lbar-lv}}
By \cref{lem:2bapd-ex-x-im-y-one-step}, if $L_f\alpha_k^2\leq \gamma_{k}(1+\alpha_k)$, then $\barlk = \lambda_{k+1}$ leads to \cref{eq:2bapd-ex-x-im-y-lk1-contrac-Ek}. However, this does not give a splitting algorithm. Thus, as before, we consider other semi-implicit and explicit choices. 

Different from the first class of methods in which \cref{eq:2bapd-im-x-im-y-lbar-lv} and \cref{eq:2bapd-im-x-im-y-lbar-lw} are equivalent, the scheme \cref{eq:2bapd-ex-x-im-y-lbar-arg} loses this symmetric property. In this part, we consider the first one \cref{eq:2bapd-im-x-im-y-lbar-lv}, which gives
\begin{equation}\label{eq:2bapd-ex-x-im-y-lv}
	\left\{
	\begin{aligned}
		{}&		u_{k} ={} (x_k+\alpha_kv_{k})/(1+\alpha_k),\quad
		d_k=\nabla f_1(u_k)+A^{\top}\lambda_k,\\
		{}&		v_{k+1}
		\!=\!\mathop{\argmin}\limits_{v\in\mathcal X}\!\!
		\left\{
		f_2(v)+\dual{d_k,v}
		+\frac{\alpha_k}{2\theta_{k}}\nm{Av+Bw_{k}-b}^2
		+\frac{\widetilde{\eta}_{f,k}}{2\alpha_k}\nm{v-\widetilde{v}_k}^2\right\}\!,\\
		{}&		x_{k+1} ={} (x_k+\alpha_kv_{k+1})/(1+\alpha_k),\\
		{}&	\barlk = {}\lambda_k+
		\alpha_k/\theta_k
		\left(Av_{k+1}+Bw_{k}-b\right),\\		
		{}&		y_{k+1} = {}\prox_{\tau_kg}^{\mathcal Y}(\widetilde{y}_k-\tau_kB^{\top}\barlk),\quad \tau_k = \alpha_k^2/\eta_{g,k},\\
		{}&				w_{k+1} = {}y_{k+1}+(y_{k+1}-y_k)/\alpha_k,\\
		{}&	\lambda_{k+1}
		={}\lambda_k+\alpha_k/\theta_{k}\left(Av_{k+1}+Bw_{k+1}-b\right),
	\end{aligned}
	\right.
\end{equation}
where $(\widetilde{v}_k,\widetilde{y}_k,\widetilde{\eta}_{f,k},\eta_{g,k})$ are the same as that in \cref{eq:2bapd-ex-x-im-y-lbar-arg} and $(x_0,v_0)\in\mathcal X\times\mathcal X$. According to \cref{lem:2bapd-ex-x-im-y-one-step}, we have $\{(x_k,y_k)\}_{k=1}^{\infty}\subset \mathcal X\times \mathcal Y$, and the following result should appear natural.
\begin{thm}\label{thm:2bapd-ex-x-ex-y-lv-conv}
	If $\barlk$ is chosen from \cref{eq:2bapd-im-x-im-y-lbar-lv}, then \cref{eq:2bapd-ex-x-im-y-lbar-arg} reduces to \cref{eq:2bapd-ex-x-im-y-lv}. Besides, under the initial setting \cref{eq:b0-g0}, \cref{assum:ex-x-im-y} and the condition 
	\begin{equation}\label{eq:ak-ex-x-im-y-lk1}
		\big(L_f\beta_k\theta_{k}+\gamma_k\nm{B}^2\big)\alpha_k^2 = \gamma_{k}\beta_k\theta_k,
	\end{equation}
	we have $\{(x_k,y_k)\}_{k=1}^{\infty}\subset \mathcal X\times \mathcal Y$ and 
	\begin{equation}\label{eq:2bapd-ex-x-im-y-lk1-conv}
		\nm{Ax_k+By_k-b}\leq \theta_k\mathcal R_0,\quad
		|F(x_k,y_k)-F^*|\leq 	\theta_k\left(\mathcal E_0+\nm{\lambda^*}\mathcal R_0\right).
	\end{equation}
	Above, $\mathcal R_0$ is defined in \cref{thm:conv-2bapd-im-x-im-y-lv} and $\theta_k$ satisfies
	\begin{equation}
		\label{eq:2bapd-ex-x-im-y-lw-rate}
		\theta_k\lesssim
		\min\left\{
		\frac{\nm{B}}{\sqrt{\beta_0}k},\,\frac{\nm{B}^2}{\mu_g k^2}
		\right\}
		+ \min\left\{
		\frac{L_f}{\gamma_0k^2},\,\exp\left(-\frac{k}{4}\sqrt{\frac{\mu_f}{L_f}}\right)
		\right\},
	\end{equation}
	provided that $\gamma_0\beta_0\leq L_f\beta_0+\gamma_0\nm{B}^2$.
\end{thm}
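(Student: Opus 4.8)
The plan is to insert the choice \cref{eq:2bapd-im-x-im-y-lbar-lv} into the one-iteration estimate \cref{eq:2bapd-ex-x-im-y-one-step} and reduce the whole statement to a scalar recursion for $\theta_k$. First I would substitute the identity \cref{eq:lk1-barlk-im-x-im-y-lw}, namely $\lambda_{k+1}-\barlk=\alpha_k/\theta_kB(w_{k+1}-w_k)$, so that $\frac{\theta_k}{2}\nm{\lambda_{k+1}-\barlk}^2\leq\frac{\alpha_k^2\nm{B}^2}{2\theta_k}\nm{w_{k+1}-w_k}^2$; collecting the two square terms, \cref{eq:2bapd-ex-x-im-y-one-step} turns into
\[
\mathcal E_{k+1}-\mathcal E_k\leq-\alpha_k\mathcal E_{k+1}+\frac{L_f\alpha_k^2\theta_{k+1}-\gamma_k\theta_k}{2\theta_k}\nm{v_{k+1}-v_k}^2+\frac{\alpha_k^2\nm{B}^2-\beta_k\theta_k}{2\theta_k}\nm{w_{k+1}-w_k}^2.
\]
The key algebraic observation is that the step-size rule \cref{eq:ak-ex-x-im-y-lk1} is exactly $1/\alpha_k^2=L_f/\gamma_k+\nm{B}^2/(\beta_k\theta_k)$; this makes the $w$-coefficient nonpositive (since $\alpha_k^2\nm{B}^2\leq\beta_k\theta_k$) and forces $L_f\alpha_k^2\leq\gamma_k$, which together with $\theta_{k+1}\leq\theta_k$ makes the $v$-coefficient nonpositive as well. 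Hence the contraction $\mathcal E_{k+1}-\mathcal E_k\leq-\alpha_k\mathcal E_{k+1}$ holds, and \cref{eq:tk-prod} gives $\mathcal E_k\leq\theta_k\mathcal E_0$.

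The feasibility and objective estimates \cref{eq:2bapd-ex-x-im-y-lk1-conv} then follow verbatim from the argument already used for \cref{thm:conv-2bapd-im-x-im-y-lv} (following \cite[Theorem 3.1]{luo_accelerated_2021}), since they depend only on $\mathcal E_k\leq\theta_k\mathcal E_0$ together with the inclusion $\{(x_k,y_k)\}\subset\mathcal X\times\mathcal Y$ already guaranteed by \cref{lem:2bapd-ex-x-im-y-one-step}. Thus the only substantive task left is the mixed-type decay estimate \cref{eq:2bapd-ex-x-im-y-lw-rate}.

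For the decay rate I would work from $\theta_{k+1}-\theta_k=-\alpha_k\theta_{k+1}$ and the identity $1/\alpha_k^2=L_f/\gamma_k+\nm{B}^2/(\beta_k\theta_k)$. The provision $\gamma_0\beta_0\leq L_f\beta_0+\gamma_0\nm{B}^2$ yields $\alpha_k\leq1$, hence $\theta_{k+1}\geq\theta_k/2$. Subadditivity of the square root, $1/\alpha_k\leq\sqrt{L_f/\gamma_k}+\nm{B}/\sqrt{\beta_k\theta_k}$, decouples the two driving mechanisms. Feeding in the four pairings of the lower bounds from \cref{eq:est-bk-gk,eq:2bapd-tk} — that is $\gamma_k\geq\theta_k\gamma_0$ or $\gamma_k\geq\mu_f$, combined with $\beta_k\geq\theta_k\beta_0$ or $\beta_k\geq\mu_g$ — produces four recursions of the form $\theta_{k+1}-\theta_k\leq-\theta_k\theta_{k+1}/(a\theta_k^{p}+b)$ with $p\in\{1/2,1\}$. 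Each of these, via \cref{lem:est-yk-case1,lem:est-yk-case2}, delivers one of the f-rates $L_f/(\gamma_0k^2)$ or $\exp(-\tfrac{k}{4}\sqrt{\mu_f/L_f})$ added to one of the B-rates $\nm{B}/(\sqrt{\beta_0}k)$ or $\nm{B}^2/(\mu_gk^2)$. Since these four bounds hold simultaneously and $\min_{i,j}(f_i+B_j)=\min_if_i+\min_jB_j$, taking the minimum over the four pairings reproduces exactly \cref{eq:2bapd-ex-x-im-y-lw-rate}.

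The step I expect to be hardest is this decay analysis, specifically arranging that the two mechanisms add rather than combine harmonically. The exponential term is the delicate piece: it surfaces only in the regime $\gamma_k\geq\mu_f$, where the f-contribution $\sqrt{L_f/\gamma_k}$ to $1/\alpha_k$ is a constant $\sqrt{L_f/\mu_f}$ and thus yields a constant multiplicative contraction instead of a polynomial one. The coupling in $1/\alpha_k^2=L_f/\gamma_k+\nm{B}^2/(\beta_k\theta_k)$ must be split cleanly by the square-root subadditivity and then matched to the precise hypotheses of \cref{lem:est-yk-case1,lem:est-yk-case2}, with careful tracking of constants so that the $1/k$, $1/k^2$ and exponential rates emerge with the stated coefficients.
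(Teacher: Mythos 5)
Your contraction argument and the derivation of \cref{eq:2bapd-ex-x-im-y-lk1-conv} coincide with the paper's own proof: reading \cref{eq:ak-ex-x-im-y-lk1} as $1/\alpha_k^2=L_f/\gamma_k+\nm{B}^2/(\beta_k\theta_k)$ makes both square coefficients in \cref{eq:2bapd-ex-x-im-y-one-step} nonpositive, and the rates then follow as in \cref{thm:conv-2bapd-im-x-im-y-lv}; your four-pairing, sum-of-minima architecture for the decay estimate is also the paper's. The gap is in the toolkit you propose for the pairings that produce the exponential. The pairing $\gamma_k\geq\mu_f$, $\beta_k\geq\theta_k\beta_0$ --- the one responsible for the bound $\frac{\nm{B}}{\sqrt{\beta_0}k}+\exp\big(-\frac{k}{4}\sqrt{\mu_f/L_f}\big)$ --- gives the recursion $\theta_{k+1}-\theta_k\leq-\theta_k\theta_{k+1}/(a\theta_k+b)$ with $a=\sqrt{L_f/\mu_f}$, $b=\nm{B}/\sqrt{\beta_0}$, and neither of the two lemmas you cite can solve it: \cref{lem:est-yk-case1} only yields polynomial decay, while the exponential branch of \cref{lem:est-yk-case2} (the case $\nu=1/2$) pairs the exponential with the squared remainder $(R/(\sigma\tau k))^2$, never with the linear remainder $R/(\sigma\tau k)$ that this pairing requires; indeed its hypothesis cannot even be matched, since $\theta_k/(a\theta_k+b)\geq\sigma\theta_k^{1/2}/\sqrt{Q\theta_k+R^2}$ fails as $\theta_k\to0$ for every fixed $\sigma>0$. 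The conclusion ``exponential plus $6R/(\sigma\tau k)$'' is precisely \cref{lem:est-yk-case3}, which the paper invokes and you omit; without it one of the four simultaneous bounds, and hence the claimed minimum in \cref{eq:2bapd-ex-x-im-y-lw-rate}, is out of reach.

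Second, even for the pairings your cited lemmas do cover, the square-root splitting $1/\alpha_k\leq\sqrt{L_f/\gamma_k}+\nm{B}/\sqrt{\beta_k\theta_k}$ costs a constant that $\lesssim$ cannot absorb in the exponential regime: to feed the resulting denominators $a\theta_k^{p}+b$ into \cref{lem:est-yk-case2,lem:est-yk-case3} as black boxes you must inflate $a\theta_k^{p}+b\leq\sqrt{2}\sqrt{a^2\theta_k^{2p}+b^2}$, i.e.\ apply the lemmas with $\sigma=1/\sqrt{2}$, and the output is $\exp\big(-\frac{k}{4\sqrt{2}}\sqrt{\mu_f/L_f}\big)$, which does \emph{not} imply the stated $\exp\big(-\frac{k}{4}\sqrt{\mu_f/L_f}\big)$ (the ratio of the two diverges in $k$). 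The clean fix, and the paper's route, is not to split at all: substitute the lower bounds for $\gamma_k,\beta_k$ directly into $1/\alpha_k^2$ and keep a single square root, e.g.\ $\alpha_k\geq\sqrt{\mu_f\beta_0}\,\theta_k/\sqrt{L_f\beta_0\theta_k^2+\mu_f\nm{B}^2}$, which matches the hypotheses of \cref{lem:est-yk-case2,lem:est-yk-case3} verbatim and, with $\tau=1/2$, returns the exponent $\frac{k}{4}\sqrt{\mu_f/L_f}$ exactly.
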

\begin{proof}
	In view of \cref{lem:2bapd-ex-x-im-y-one-step} and the relation \cref{eq:lk1-barlk-im-x-im-y-lw}, it follows that
	\[
	\begin{aligned}
		\mathcal E_{k+1}-\mathcal E_{k}
		\leq &	-\alpha_k\mathcal E_{k+1}
		+		\frac{L_f\alpha_k^2\theta_{k+1}-\gamma_{k}\theta_k}{2\theta_k}
		\nm{v_{k+1}-v_k}^2	
		+	\frac{\alpha_k^2\nm{B}^2-\beta_{k}\theta_k}{2\theta_k}
		\nm{w_{k+1}-w_k}^2.
	\end{aligned}
	\]
	Thanks to the condition \cref{eq:ak-ex-x-im-y-lk1} and the fact $\theta_{k+1}\leq \theta_k$, the above two square terms can be dropped. This promises $\mathcal E_k\leq \theta_k\mathcal E_0$ and thus implies \cref{eq:2bapd-ex-x-im-y-lk1-conv}, by repeating the proof of 	\cref{eq:2bapd-im-x-im-y-lw-rate}. Then using \cref{lem:est-yk-case1,lem:est-yk-case2,lem:est-yk-case3}, the proof of the mixed-type estimate \cref{eq:2bapd-ex-x-im-y-lw-rate} is in line with that of  \cref{eq:2bapd-im-x-im-y-lk-rate}.
\end{proof}

\subsection{The semi-implicit choice \cref{eq:2bapd-im-x-im-y-lbar-lw}}
We then apply another one \cref{eq:2bapd-im-x-im-y-lbar-lw} to \cref{eq:2bapd-ex-x-im-y-lbar-arg} and obtain 
\begin{equation}\label{eq:2bapd-ex-x-im-y-lw}
	\left\{
	\begin{aligned}
		{}&			u_{k} ={} (x_k+\alpha_kv_{k})/(1+\alpha_k),\\
		{}&		\widehat{\lambda}_k={} \lambda_k-\theta_k^{-1}\left(Ax_k+By_k-b\right)+\alpha_k/\theta_kA(v_k-x_k),\\
		{}&y_{k+1}
		={}\mathop{\argmin}\limits_{y\in\mathcal Y}
		\left\{
		\mathcal L_{\sigma_k}(x_k,y,\widehat{\lambda}_k)
		+\frac{\eta_{g,k}}{2\alpha^2_k}\nm{y-\widetilde{y}_k}^2\right\},\quad \sigma_k = 1/\theta_{k+1},\\
		{}&w_{k+1} = {}y_{k+1}+(y_{k+1}-y_k)/\alpha_k,\\		
		{}&		\barlk = {}\lambda_k+
		\alpha_k/\theta_k
		\left(Av_{k}+Bw_{k+1}-b\right),\\		
		{}&	v_{k+1}
		={}\prox^{\mathcal X}_{s_kf_2}\left[\widetilde{v}_k-s_k(\nabla f_1(u_k)+A^{\top}\barlk)\right],\quad s_k = \alpha_k/\widetilde{\eta}_{f,k},\\
		{}&		x_{k+1} ={}(x_k+\alpha_kv_{k+1})/(1+\alpha_k),\\
		{}&		\lambda_{k+1}
		={}\lambda_k+\alpha_k/\theta_{k}\left(Av_{k+1}+Bw_{k+1}-b\right),
	\end{aligned}
	\right.
\end{equation}
where $(\widetilde{v}_k,\widetilde{y}_k,\widetilde{\eta}_{f,k},\eta_{g,k})$ are the same as that in \cref{eq:2bapd-ex-x-im-y-lbar-arg} and $(x_0,v_0)\in\mathcal X\times\mathcal X$. By \cref{eq:2bapd-im-x-im-y-lbar-lw} and the last equation of \cref{eq:2bapd-ex-x-im-y-lw}, we have 
\[
\lambda_{k+1}-\barlk=\alpha_k/\theta_kA(v_{k+1}-v_k).
\]
Plugging this into \cref{lem:2bapd-ex-x-im-y-one-step}, one finds that
\[
\mathcal E_{k+1}-\mathcal E_{k}
\leq 
-\alpha_k\mathcal E_{k+1}
+\frac{1}{2\theta_{k}}
\big((L_f\theta_{k+1}+\nm{A}^2)	\alpha_k^2
-	\gamma_{k}\theta_k\big)
\nm{v_{k+1}-v_k}^2.
\]
Thus under the condition \cref{eq:ak-2bapd-ex-x-im-y-lw}, the contraction follows easily. As the mixed-type estimate \cref{eq:2bapd-ex-x-im-y-lv-rate} of $\theta_k$ can be proved by using \cref{lem:est-yk-case2} and a similar argument as before, we conclude the following.
\begin{thm}\label{thm:2bapd-ex-x-im-y-lw-conv}
	If $\barlk$ is chosen from \cref{eq:2bapd-im-x-im-y-lbar-lw}, then \cref{eq:2bapd-ex-x-im-y-lbar-arg} becomes \cref{eq:2bapd-ex-x-im-y-lw}. 
	Under the initial setting \cref{eq:b0-g0}, \cref{assum:ex-x-im-y} and the condition 
	\begin{equation}\label{eq:ak-2bapd-ex-x-im-y-lw}
		(L_f\theta_k+\nm{A}^2)\alpha_k^2=\gamma_{k}	\theta_k,
	\end{equation}
	we have $\{(x_k,y_k)\}_{k=1}^{\infty}\subset \mathcal X\times \mathcal Y$ and $\mathcal E_k\leq \theta_k\mathcal E_0$. Moreover, if $\gamma_0\leq L_f+\nm{A}^2$, then the estimate \cref{eq:2bapd-ex-x-im-y-lk1-conv} holds true with
	\begin{equation}\label{eq:2bapd-ex-x-im-y-lv-rate}
		\theta_k\lesssim\min\left\{
		\frac{\nm{A}}{\sqrt{\gamma_0}k}+\frac{L_f}{\gamma_0k^2},\quad
		\frac{\nm{A}^2}{\mu_f k^2}
		+\exp\left(-\frac{k}{4}\sqrt{\frac{\mu_f}{L_f}}\right)
		\right\}.
	\end{equation}
\end{thm}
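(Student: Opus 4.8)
The plan is to reuse the three-step template behind \cref{thm:conv-2bapd-im-x-im-y-lv}: verify the algebraic reduction to \cref{eq:2bapd-ex-x-im-y-lw}, establish a per-iteration contraction of $\mathcal E_k$ from \cref{lem:2bapd-ex-x-im-y-one-step}, and then read off the decay rate of $\theta_k$ from its governing recursion. For the reduction I would note that the choice \cref{eq:2bapd-im-x-im-y-lbar-lw}, namely $\barlk=\lambda_k+\alpha_k/\theta_k(Av_k+Bw_{k+1}-b)$, depends on $w_{k+1}$ (hence on $y_{k+1}$) but not on $v_{k+1}$. Consequently the $y$-subproblem is implicit in $y_{k+1}$ through $\barlk$, and is recast, by absorbing that dependence into an augmented quadratic, as the augmented-Lagrangian minimization of $\mathcal L_{\sigma_k}(x_k,y,\widehat\lambda_k)$ with $\sigma_k=1/\theta_{k+1}$; it is solved first, after which $w_{k+1}$ and then $\barlk$ are known, and the remaining $v$-subproblem \eqref{eq:2bapd-ex-x-im-y-lbar-v-arg} collapses to the explicit linearized proximal step $\prox^{\mathcal X}_{s_kf_2}$ with $s_k=\alpha_k/\widetilde{\eta}_{f,k}$. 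This produces exactly \cref{eq:2bapd-ex-x-im-y-lw}. Feasibility $\{(x_k,y_k)\}\subset\mathcal X\times\mathcal Y$ follows by induction, since \cref{lem:2bapd-ex-x-im-y-one-step} already yields $(u_k,x_{k+1},v_{k+1})\in\mathcal X^3$ from $(x_k,v_k)\in\mathcal X^2$, while $y_{k+1}\in\mathcal Y$ comes from the $\prox$ of $g$.

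The contraction is the crux of the qualitative claim. Subtracting the $\barlk$ formula from the $\lambda_{k+1}$ update gives the identity $\lambda_{k+1}-\barlk=\alpha_k/\theta_k\,A(v_{k+1}-v_k)$, which I would feed into \cref{lem:2bapd-ex-x-im-y-one-step}. Bounding $\nm{A(v_{k+1}-v_k)}\le\nm{A}\nm{v_{k+1}-v_k}$ turns the gain term $\tfrac{\theta_k}{2}\nm{\lambda_{k+1}-\barlk}^2$ into a multiple of $\nm{v_{k+1}-v_k}^2$, so the two $v$-contributions merge into the single coefficient $\big((L_f\theta_{k+1}+\nm{A}^2)\alpha_k^2-\gamma_k\theta_k\big)/(2\theta_k)$, while the remaining $w$-term is negative and discarded. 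Because $\theta_{k+1}\le\theta_k$, the step rule \cref{eq:ak-2bapd-ex-x-im-y-lw} forces this coefficient to be nonpositive, yielding $\mathcal E_{k+1}-\mathcal E_k\le-\alpha_k\mathcal E_{k+1}$ and hence $\mathcal E_k\le\theta_k\mathcal E_0$ through the product formula \cref{eq:tk-prod}. The feasibility-violation and objective-residual bounds in \cref{eq:2bapd-ex-x-im-y-lk1-conv} then follow verbatim from the argument establishing \cref{eq:2bapd-im-x-im-y-lw-rate} in \cref{thm:conv-2bapd-im-x-im-y-lv}.

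The genuinely delicate part, and where I expect the main difficulty, is the mixed-type decay \cref{eq:2bapd-ex-x-im-y-lv-rate}. Solving \cref{eq:ak-2bapd-ex-x-im-y-lw} gives $\alpha_k=\sqrt{\gamma_k\theta_k/(L_f\theta_k+\nm{A}^2)}$, and the parameter system supplies $\theta_{k+1}-\theta_k=-\alpha_k\theta_{k+1}$. The plan is to split according to which term dominates the denominator $L_f\theta_k+\nm{A}^2$ and which lower bound on $\gamma_k$ is used. Employing $\gamma_k\ge\mu_f$ isolates a constant-step regime (when $L_f\theta_k$ dominates, so $\alpha_k\gtrsim\sqrt{\mu_f/L_f}$) that produces the linear factor $\exp(-\tfrac{k}{4}\sqrt{\mu_f/L_f})$, together with an $\nm{A}^2$-dominant regime giving $\theta_k\lesssim\nm{A}^2/(\mu_f k^2)$; employing $\gamma_k\ge\theta_k\gamma_0$ from \cref{eq:est-bk-gk} yields the convex branch $\nm{A}/(\sqrt{\gamma_0}k)+L_f/(\gamma_0 k^2)$. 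Each branch fits the hypothesis of \cref{lem:est-yk-case2}, with the exponential regime handled exactly as in \cref{thm:2bapd-ex-x-ex-y-lv-conv}. The real labor is the bookkeeping of these sub-cases and their assembly, under $\gamma_0\le L_f+\nm{A}^2$, into the stated nested minimum; this combinatorial matching of the recursion to the case lemmas is the anticipated obstacle.
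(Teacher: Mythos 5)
Your proposal is correct and follows essentially the same route as the paper: the identity $\lambda_{k+1}-\bar\lambda_{k+1}=\alpha_k/\theta_k\,A(v_{k+1}-v_k)$ fed into \cref{lem:2bapd-ex-x-im-y-one-step}, the step rule \cref{eq:ak-2bapd-ex-x-im-y-lw} together with $\theta_{k+1}\le\theta_k$ to kill the $\nm{v_{k+1}-v_k}^2$ coefficient, and then \cref{lem:est-yk-case2} applied twice (with $\gamma_k\ge\theta_k\gamma_0$ for the convex branch, $\gamma_k\ge\mu_f$ for the strongly convex branch, and $\gamma_0\le L_f+\nm{A}^2$ guaranteeing $\alpha_k\le 1$ so $\tau\ge 1/2$). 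The only cosmetic difference is that your sub-case split on which term of $L_f\theta_k+\nm{A}^2$ dominates is unnecessary, since \cref{lem:est-yk-case2} with $\nu=1$ and $\nu=1/2$ handles the combined denominator $\sqrt{Q\theta_k+R^2}$ directly and already yields both terms of each branch, including the exponential factor.
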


\subsection{The explicit choice \cref{eq:2bapd-im-x-im-y-lbar-lk}}
To the end, we adopt the explicit one \cref{eq:2bapd-im-x-im-y-lbar-lk} and obtain
\begin{equation}\label{eq:2bapd-ex-x-im-y-lk}
	\left\{
	\begin{aligned}
		{}&			u_{k} = {}(x_k+\alpha_kv_{k})/(1+\alpha_k),\\
		{}&		\barlk	={}\lambda_k+\alpha_k/\theta_k(Av_k+Bw_{k}-b),\\
		{}&		v_{k+1}
		={}\prox^{\mathcal X}_{s_kf_2}\left[\widetilde{v}_k-s_k(\nabla f_1(u_k)+A^{\top}\barlk)\right],\quad s_k = \alpha_k/\widetilde{\eta}_{f,k},
		\\
		{}&		x_{k+1} ={} (x_k+\alpha_kv_{k+1})/(1+\alpha_k),\\
		{}&		y_{k+1} = {}\prox_{\tau_kg}^{\mathcal Y}(\widetilde{y}_k-\tau_kB^{\top}\barlk),\quad \tau_k = \alpha_k^2/\eta_{g,k},\\
		{}&		w_{k+1} ={} y_{k+1}+(y_{k+1}-y_k)/\alpha_k,\\
		{}&		\lambda_{k+1}
		={}\lambda_k+\alpha_k/\theta_{k}\left(Av_{k+1}+Bw_{k+1}-b\right),
	\end{aligned}
	\right.
\end{equation}
where $(\widetilde{v}_k,\widetilde{y}_k,\widetilde{\eta}_{f,k},\eta_{g,k})$ are the same as that in \cref{eq:2bapd-ex-x-im-y-lbar-arg} and $(x_0,v_0)\in\mathcal X\times\mathcal X$.

By \cref{eq:2bapd-im-x-im-y-lbar-lk} and the last equation of \cref{eq:2bapd-ex-x-im-y-lk}, we see that \cref{eq:diff-lk-ex} still holds true and invoking \cref{lem:2bapd-ex-x-im-y-one-step}, we obtain the estimate
\[
\begin{aligned}
	\mathcal E_{k+1}-\mathcal E_{k}
	\leq &	-\alpha_k\mathcal E_{k+1}
	+	\frac{2\alpha_k^2\nm{B}^2-\beta_{k}\theta_k}{2\theta_k}
	\nm{w_{k+1}-w_k}^2	\\
	{}&\quad		
	+\frac{1}{2\theta_{k}}
	\big((L_f\theta_{k+1}+2\nm{A}^2)	\alpha_k^2
	-	\gamma_{k}\theta_k\big)
	\nm{v_{k+1}-v_k}^2	.
\end{aligned}
\]
Hence, it is not hard to conclude the following result from this. By using \cref{lem:est-yk-case2,lem:est-yk-case3}, the proof of the mixed-type estimate \cref{eq:rate-2bapd-ex-x-ex-y-lk} is a little bit tedious but similar with the spirit of \cref{eq:2bapd-ex-x-im-y-lv-rate}.
\begin{thm}\label{thm:2bapd-ex-x-ex-y-lk-conv}
	Applying \cref{eq:2bapd-im-x-im-y-lbar-lk} to
	\cref{eq:2bapd-ex-x-im-y-lbar-arg} leads to \cref{eq:2bapd-ex-x-im-y-lk}. In addition, under the initial setting \cref{eq:b0-g0}, \cref{assum:ex-x-im-y} and the condition
	\[
	\big(L_f\beta_k\theta_{k}+2\beta_k\nm{A}^2+2\gamma_k\nm{B}^2\big)\alpha_k^2= 			\gamma_k\beta_k\theta_k,
	\]
	we 						have $\{(x_k,y_k)\}_{k=1}^{\infty}\subset \mathcal X\times \mathcal Y$ and $\mathcal E_k\leq \theta_k\mathcal E_0$. If $\gamma_0\beta_0\leq L_f\beta_0+2\beta_0\nm{A}^2+2\gamma_0\nm{B}^2$, then the estimate \cref{eq:2bapd-ex-x-im-y-lk1-conv} holds true with
	\begin{equation}
		\label{eq:rate-2bapd-ex-x-ex-y-lk}
		\theta_k\lesssim \min\left\{
		\frac{\nm{B}}{\sqrt{\beta_0}k},\frac{\nm{B}^2}{\mu_g k^2}
		\right\}
		+\min\left\{
		\frac{\nm{A}}{\sqrt{\gamma_0}k}+\frac{L_f}{\gamma_0k^2},\,
		\frac{\nm{A}^2}{\mu_f k^2}
		+\exp\left(-\frac{k}{4}\sqrt{\frac{\mu_f}{L_f}}\right)
		\right\}.
	\end{equation}
\end{thm}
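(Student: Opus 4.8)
The plan is to treat \cref{thm:2bapd-ex-x-ex-y-lk-conv} as the common refinement of the first-family explicit scheme (\cref{thm:conv-2bapd-im-x-im-y-lk}) and the second-family semi-implicit scheme (\cref{thm:2bapd-ex-x-im-y-lw-conv}): the explicit choice \cref{eq:2bapd-im-x-im-y-lbar-lk} contributes a parallel $A$- and $B$-splitting of $\lambda_{k+1}-\barlk$, while the linearization of $f_1$ in \cref{eq:2bapd-ex-x-im-y-lk} contributes the smooth term $L_f$. First I would record that substituting \cref{eq:2bapd-im-x-im-y-lbar-lk} into the primal-dual formulation \cref{eq:2bapd-ex-x-im-y-lbar-arg} yields exactly \cref{eq:2bapd-ex-x-im-y-lk}; this is purely mechanical. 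The inclusion $\{(x_k,y_k)\}_{k=1}^\infty\subset\mathcal X\times\mathcal Y$ is then inherited from \cref{lem:2bapd-ex-x-im-y-one-step} together with the correction step \eqref{eq:2bapd-ex-x-im-y-lbar-x-correc}, once $(x_0,v_0)\in\mathcal X\times\mathcal X$.

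For the contraction I would start from the per-iteration bound established just before the statement, obtained by inserting \cref{eq:diff-lk-ex} into \cref{eq:2bapd-ex-x-im-y-one-step} and estimating the gain $\tfrac{\theta_k}{2}\nm{\lambda_{k+1}-\barlk}^2\le\tfrac{\alpha_k^2}{\theta_k}\big(\nm{A}^2\nm{v_{k+1}-v_k}^2+\nm{B}^2\nm{w_{k+1}-w_k}^2\big)$ via $\nm{a+b}^2\le2\nm{a}^2+2\nm{b}^2$. It remains to drive the coefficients of $\nm{v_{k+1}-v_k}^2$ and $\nm{w_{k+1}-w_k}^2$ below zero. Dividing the step-size equality by $\beta_k$ shows $(L_f\theta_k+2\nm{A}^2)\alpha_k^2\le\gamma_k\theta_k$ (so the $v$-coefficient is $\le0$, using $\theta_{k+1}\le\theta_k$), and dividing by $\gamma_k$ shows $2\nm{B}^2\alpha_k^2\le\beta_k\theta_k$ (so the $w$-coefficient is $\le0$). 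This gives $\mathcal E_{k+1}-\mathcal E_k\le-\alpha_k\mathcal E_{k+1}$; telescoping through \cref{eq:tk-prod} yields $\mathcal E_k\le\theta_k\mathcal E_0$, and the feasibility and objective bounds \cref{eq:2bapd-ex-x-im-y-lk1-conv} follow verbatim from the proof of \cref{thm:conv-2bapd-im-x-im-y-lv}.

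The substance of the theorem is the mixed-type rate \cref{eq:rate-2bapd-ex-x-ex-y-lk}. Solving the step-size equality gives $\alpha_k^2=\gamma_k\beta_k\theta_k/(L_f\beta_k\theta_k+2\beta_k\nm{A}^2+2\gamma_k\nm{B}^2)$, and inserting it into $\theta_{k+1}-\theta_k=-\alpha_k\theta_{k+1}$ produces a recursion of the type handled by \cref{lem:est-yk-case1,lem:est-yk-case2,lem:est-yk-case3}. The condition $\gamma_0\beta_0\le L_f\beta_0+2\beta_0\nm{A}^2+2\gamma_0\nm{B}^2$ is exactly what forces $\alpha_k\le1$ (split into the trivial case $\gamma_k\le L_f$ and the case $\gamma_k>L_f$, where one uses $\gamma_k\le\gamma_0$, $\beta_k\le\beta_0$ and $\gamma_k\ge\theta_k\gamma_0$, $\beta_k\ge\theta_k\beta_0$ from \cref{eq:est-bk-gk}), hence $\theta_{k+1}/\theta_k\ge1/2$ uniformly. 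I would then lower-bound $\alpha_k$ by retaining only part of the denominator at a time: keeping $2\gamma_k\nm{B}^2$ and using $\beta_k\ge\theta_k\beta_0$ or $\beta_k\ge\mu_g$ reproduces, as in \cref{thm:conv-2bapd-im-x-im-y-lk}, the $B$-block bound $\min\{\nm{B}/(\sqrt{\beta_0}k),\,\nm{B}^2/(\mu_g k^2)\}$; keeping $L_f\beta_k\theta_k+2\beta_k\nm{A}^2$ and using $\gamma_k\ge\theta_k\gamma_0$ or $\gamma_k\ge\mu_f$ reproduces, as in \cref{thm:2bapd-ex-x-im-y-lw-conv}, the $A$-block bound $\min\{\nm{A}/(\sqrt{\gamma_0}k)+L_f/(\gamma_0k^2),\,\nm{A}^2/(\mu_f k^2)+\exp(-\tfrac{k}{4}\sqrt{\mu_f/L_f})\}$, the exponential appearing in the regime where $L_f\theta_k$ dominates and $\gamma_k\ge\mu_f>0$ forces $\alpha_k\gtrsim\sqrt{\mu_f/L_f}$. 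Adding the two regimes gives \cref{eq:rate-2bapd-ex-x-ex-y-lk}.

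The main obstacle is the bookkeeping in this last paragraph. Because a single step-size equality must simultaneously encode two blocks (each with a convex and a strongly convex sub-case) and the extra smooth regime for the $x$-block, one has to verify that the cross-multiplied ratio inequalities match the hypotheses of \cref{lem:est-yk-case1,lem:est-yk-case2,lem:est-yk-case3} with the right constants, and that the bounds obtained by keeping different parts of the denominator may be added term by term. Everything else is a routine transcription of the two previously proved theorems.
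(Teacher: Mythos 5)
Your treatment of the contraction is correct and is what the paper does: insert \cref{eq:diff-lk-ex} into \cref{eq:2bapd-ex-x-im-y-one-step} with $\nm{a+b}^2\le 2\nm{a}^2+2\nm{b}^2$, observe that dividing the step-size equality by $\beta_k$ (resp. $\gamma_k$) makes the coefficient of $\nm{v_{k+1}-v_k}^2$ (resp. $\nm{w_{k+1}-w_k}^2$) nonpositive, then telescope to $\mathcal E_k\le\theta_k\mathcal E_0$; the bound $\alpha_k\le 1$ also holds (even without your case split, since $1/\alpha_k^2=L_f/\gamma_k+2\nm{A}^2/(\gamma_k\theta_k)+2\nm{B}^2/(\beta_k\theta_k)\ge L_f/\gamma_0+2\nm{A}^2/\gamma_0+2\nm{B}^2/\beta_0\ge 1$ under the stated initial condition). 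The genuine gap is in your final paragraph, i.e. exactly the part carrying the content of \cref{eq:rate-2bapd-ex-x-ex-y-lk}. ``Lower-bounding $\alpha_k$ by retaining only part of the denominator at a time'' goes the wrong way: since $\alpha_k^2=\gamma_k\beta_k\theta_k/D_k$ with $D_k=L_f\beta_k\theta_k+2\beta_k\nm{A}^2+2\gamma_k\nm{B}^2$, discarding terms of $D_k$ \emph{enlarges} the quotient, so it yields an upper bound on $\alpha_k$, hence only a lower bound on $\theta_k$ --- useless for a decay estimate. Worse, the intermediate statements this device is supposed to deliver are false: a stand-alone ``$B$-block bound'' $\theta_k\lesssim\min\{\nm{B}/(\sqrt{\beta_0}k),\,\nm{B}^2/(\mu_gk^2)\}$ cannot hold, because the hypotheses admit $B=0$ (whenever $\gamma_0\le L_f+2\nm{A}^2$), and then this bound would force $\theta_k\le 0$ while $\theta_k>0$. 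The decay of $\theta_k$ is throttled by both blocks simultaneously, so neither block bound exists separately, and ``adding the two regimes'' has no valid inputs to add.

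The additive structure of \cref{eq:rate-2bapd-ex-x-ex-y-lk} does not come from splitting the denominator; it comes from the conclusions of \cref{lem:est-yk-case2,lem:est-yk-case3}, whose hypotheses already contain a full quadratic $\sqrt{P\theta_k^2+Q\theta_k+R^2}$. The correct bookkeeping --- what the paper means by ``tedious but similar'' to the proof of \cref{eq:2bapd-ex-x-im-y-lv-rate} --- keeps the whole denominator: since $1/\alpha_k^2=L_f/\gamma_k+2\nm{A}^2/(\gamma_k\theta_k)+2\nm{B}^2/(\beta_k\theta_k)$ is decreasing in $\gamma_k$ and $\beta_k$, substitute the four combinations of lower bounds $\gamma_k\ge\theta_k\gamma_0$ or $\gamma_k\ge\mu_f$ and $\beta_k\ge\theta_k\beta_0$ or $\beta_k\ge\mu_g$ from \cref{eq:est-bk-gk}. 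Each substitution turns $\theta_{k+1}-\theta_k=-\alpha_k\theta_{k+1}$ into one difference inequality of the exact form of \cref{lem:est-yk-case2} or \cref{lem:est-yk-case3}. For instance, $\gamma_k\ge\mu_f$ and $\beta_k\ge\theta_k\beta_0$ give
\[
\theta_{k+1}-\theta_k\le -\theta_k\theta_{k+1}\Big(\tfrac{L_f}{\mu_f}\theta_k^2+\tfrac{2\nm{A}^2}{\mu_f}\theta_k+\tfrac{2\nm{B}^2}{\beta_0}\Big)^{-1/2},
\]
and \cref{lem:est-yk-case3} with $\tau=1/2$ yields $\theta_k\lesssim\exp\big(-\tfrac k4\sqrt{\mu_f/L_f}\big)+\nm{A}^2/(\mu_fk^2)+\nm{B}/(\sqrt{\beta_0}k)$. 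The other three cases give the other three sums, and since $\min\{a,b\}+\min\{c,d\}=\min\{a+c,\,a+d,\,b+c,\,b+d\}$, the four bounds taken together are precisely \cref{eq:rate-2bapd-ex-x-ex-y-lk}. Your proposal needs this repair of its last step before it constitutes a proof.
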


\begin{rem}\label{rem:class3}
	Based on the discretization \cref{eq:2bapd-ex-x-im-y-lbar-correc}, one can further apply the operator splitting technique to $\partial_y\mathcal L(x,y,\lambda)$	and replace \eqref{eq:2bapd-ex-x-im-y-lbar-w-correc} and \eqref{eq:2bapd-ex-x-im-y-lbar-y-correc} by that
	\[
	\left\{
	\begin{aligned}
		{}&			\frac{z_{k}-y_k}{\alpha_k}={}w_{k} - z_{k},\\
		{}&			\beta_{k}\frac{w_{k+1}-w_k}{\alpha_k} \in {}\mu_g(z_{k}-w_{k+1})- \mathcal G_y(z_k,w_{k+1},\barlk),\\
		{}&			\frac{y_{k+1}-y_k}{\alpha_k}={}w_{k+1} - y_{k+1},
	\end{aligned}
	\right.
	\]
	where $\mathcal G_x(u_k,v_{k+1},\barlk)$ is the same as that in \cref{eq:2bapd-ex-x-im-y-lbar-correc} and 
	\[
	\mathcal G_y(z_k,w_{k+1},\barlk)=\nabla g_1(z_k)+\partial g_2(w_{k+1})+B^{\top}\barlk+N_{\mathcal Y}(w_{k+1}).
	\]
	This yields the third family of methods by considering different choices of $\barlk$ (cf.\cref{eq:2bapd-im-x-im-y-lbar-lv,eq:2bapd-im-x-im-y-lbar-lw,eq:2bapd-im-x-im-y-lbar-lk}).
	
	Impose the following condition: 	
	\begin{assum}
		\label{assum:ex-x-ex-y}
		$f = f_1+f_2$ where $f_2\in\mathcal S_0^0(\mathcal X)$ and $f_1\in\mathcal S_{\mu_f,L_f}^{1,1}(\mathcal X)$ with $0\leq \mu_f\leq L_f<\infty$, and 		$g = g_1+g_2$ where $g_2\in\mathcal S_0^0(\mathcal Y)$ and $g_1\in\mathcal S_{\mu_g,L_g}^{1,1}(\mathcal Y)$ with $0\leq \mu_g\leq L_g<\infty$.
	\end{assum}
	\noindent Analogously to \cref{lem:2bapd-ex-x-im-y-one-step}, the one step analysis reads as follows
	\begin{equation*}
		\begin{split}
			\mathcal E_{k+1}-\mathcal E_{k}
			\leq &	-\alpha_k\mathcal E_{k+1}
			+		\frac{L_f\alpha_k^2\theta_{k+1}-\gamma_{k}\theta_k}{2\theta_k}
			\nm{v_{k+1}-v_k}^2	\\
			{}&\quad			+	\frac{\theta_{k}}{2}\nm{\lambda_{k+1}-\barlk}^2			
			+		\frac{L_g\alpha_k^2\theta_{k+1}-\beta_{k}\theta_k}{2\theta_k}
			\nm{w_{k+1}-w_k}^2.
		\end{split}
	\end{equation*}
	Then, nonergodic optimal mixed-type convergence rates can be established as well. For simplicity, we omit the detailed presentations of these methods and their proofs as well. 
\end{rem}

\section{Numerical Experiments}
\label{sec:num}
In this part, we investigate the practical performances of our methods on the least absolute deviation (LAD) regression and the support vector machine  (SVM), both of which admit the following form
\begin{equation}\label{eq:A-I}
	\min_{x\in\R^n,y\in\R^m} F(x,y): = f(x)+g(y)\quad\st Ax -y = 0.
\end{equation}
For all cases in the sequel, $f$ and $g$ are nonsmooth but have explicit proximal calculations. Hence, we focus only on the semi-implicit scheme \cref{eq:2bapd-im-x-im-y-lw-arg} (denoted by Semi-APD) and report the detailed comparisons with related algorithms: 
\begin{itemize}
	\item the standard linearized ADMM (LADMM) \cite[Algorithm 2]{shefi_rate_2014},
	\item the accelerated linearized ADMM (ALADMM) \cite[Algorithm 2]{Xu2017},
	\item the fast alternating minimization algorithm (Fast-AMA) \cite[Algorithm 9]{goldstein_fast_2014},
	\item the accelerated LADMM with nonergodic rate (ALADMM-NE) \cite[Algorithm 1]{Li2019},
	\item the new primal-dual (New-PD) algorithm \cite[Scheme (39)]{tran-dinh_non-stationary_2020},
	\item the Chambolle--Pock (CP) method \cite{chambolle_first-order_2011}.
\end{itemize}

All these methods (including our Semi-APD) linearize the augmented term and thus share the same proximal operations of $f$ and $g$ and the matrix-vector multiplications of $A$ and $A^\top$.  We mention that ALADMM-NE is designed only for convex problems and the convergence rate is $O(1/k)$.
Both New-PD and Fast-AMA require strong convexity and possess the fast rate $O(1/k^2)$. Our Semi-APD, ALADMM and CP enjoy the rates $O(1/k)$ and $O(1/k^2)$ respectively for convex and partially strongly convex objectives, but the latter two use ergodic sequences.

To measure the convergence behavior, we look at three relative errors:
\begin{itemize}
	\item the objective residual: $|F(x_k,y_k)-F^*|/|F(x_0,y_0)|$,
	\item the violation of feasibility: $\nm{Ax_k-y_k}/\nm{Ax_0-y_0}$,
	\item the composite objective residual: $(P(x_k)-P^*) /|P(x_0)|$,
\end{itemize}
where the composite objective is $P(x) = f(x) + g(Ax)$, and the minimal value $F^*=P^*$ is approximated by running LADMM with enough iterations. Moreover, for the LAD regression problem, we also illustrate the capability of each algorithm for maintaining the sparsity.



	\subsection{LAD regression}
	\label{sec:num-lasso}
	Consider the LAD regression problem
	\begin{equation}\label{eq:lasso}
		\min_{x\in\R^n}\,P(x): = f(x) + \nm{Ax-b}_1,
	\end{equation}
	where $A\in\R^{m\times n}$ and $b\in\R^m$ are given data with $m\ll n$, and $f$ is a regularization function. Clearly, problem \cref{eq:lasso} is equivalent to \cref{eq:A-I} with $g(y) = \nm{y-b}_1$. Here, we choose two types of regularizer:
	\begin{itemize}
		\item {\it Case 1}: $f(x)=\lambda\nm{x}_1$,
		\item {\it Case 2}: $f(x)=\lambda \nm{x}_1+\mu_f/2\nm{x}^2$,
	\end{itemize}
	where the regularization parameter is $\lambda = 2$ 
	and the strong convexity constant is $\mu_f = 0.1$. 
	Similarly with \cite{tran-dinh_non-stationary_2020}, we generate the matrix $A$ from the standard normal distribution and set $b = Ax^\#+e$, where $x^{\#}$ is a sparse vector and $e$ is a Gaussian noise with variance $\sigma^2 = 0.01$.
	
	
	\begin{figure}[H]
		\centering
		\begin{subfigure}[h]{0.35\textwidth}
			\centering
			\includegraphics[width=\textwidth]{./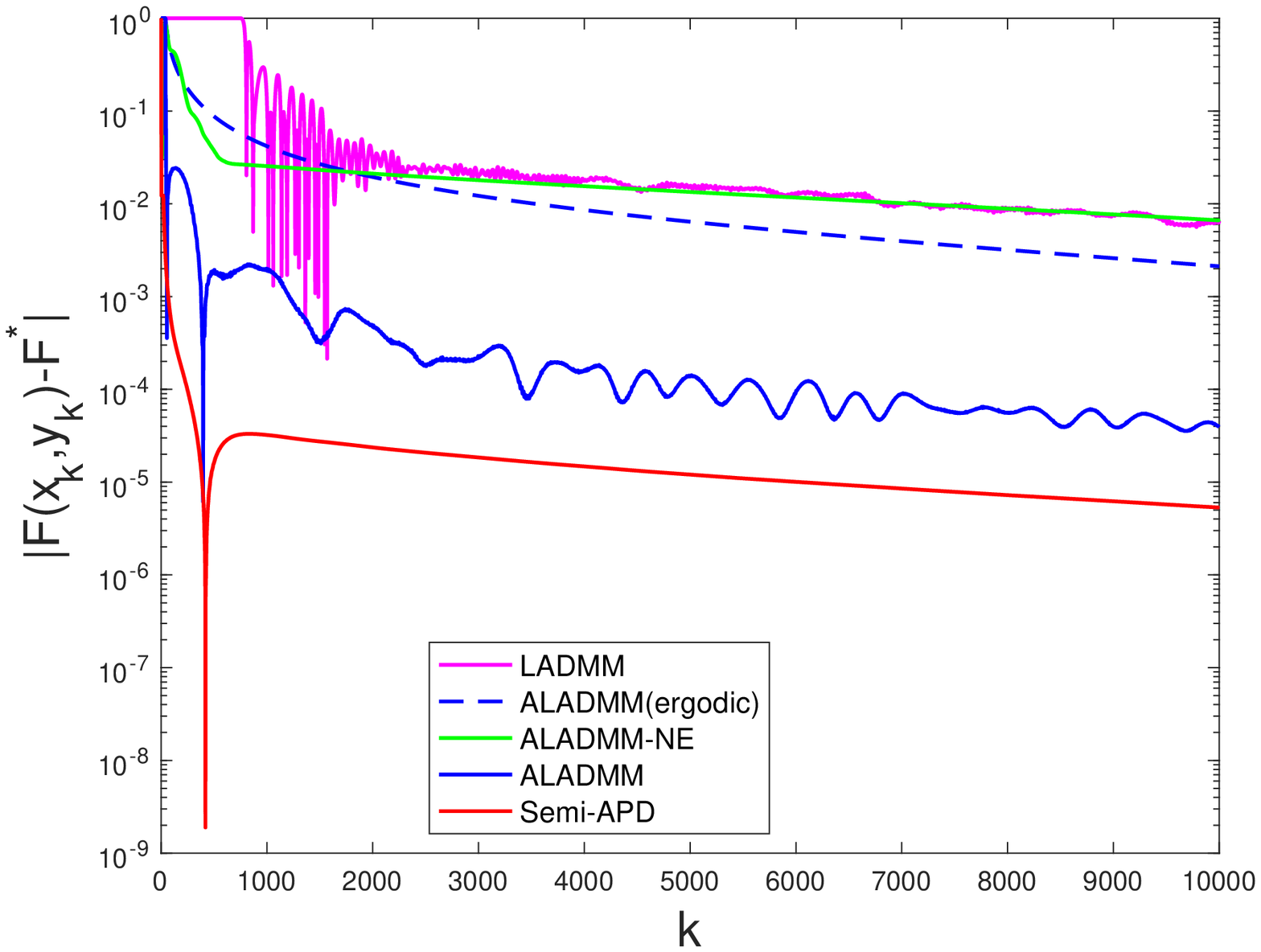}
		\end{subfigure}
		\begin{subfigure}[h]{0.35\textwidth}
			\centering
			\includegraphics[width=\textwidth]{./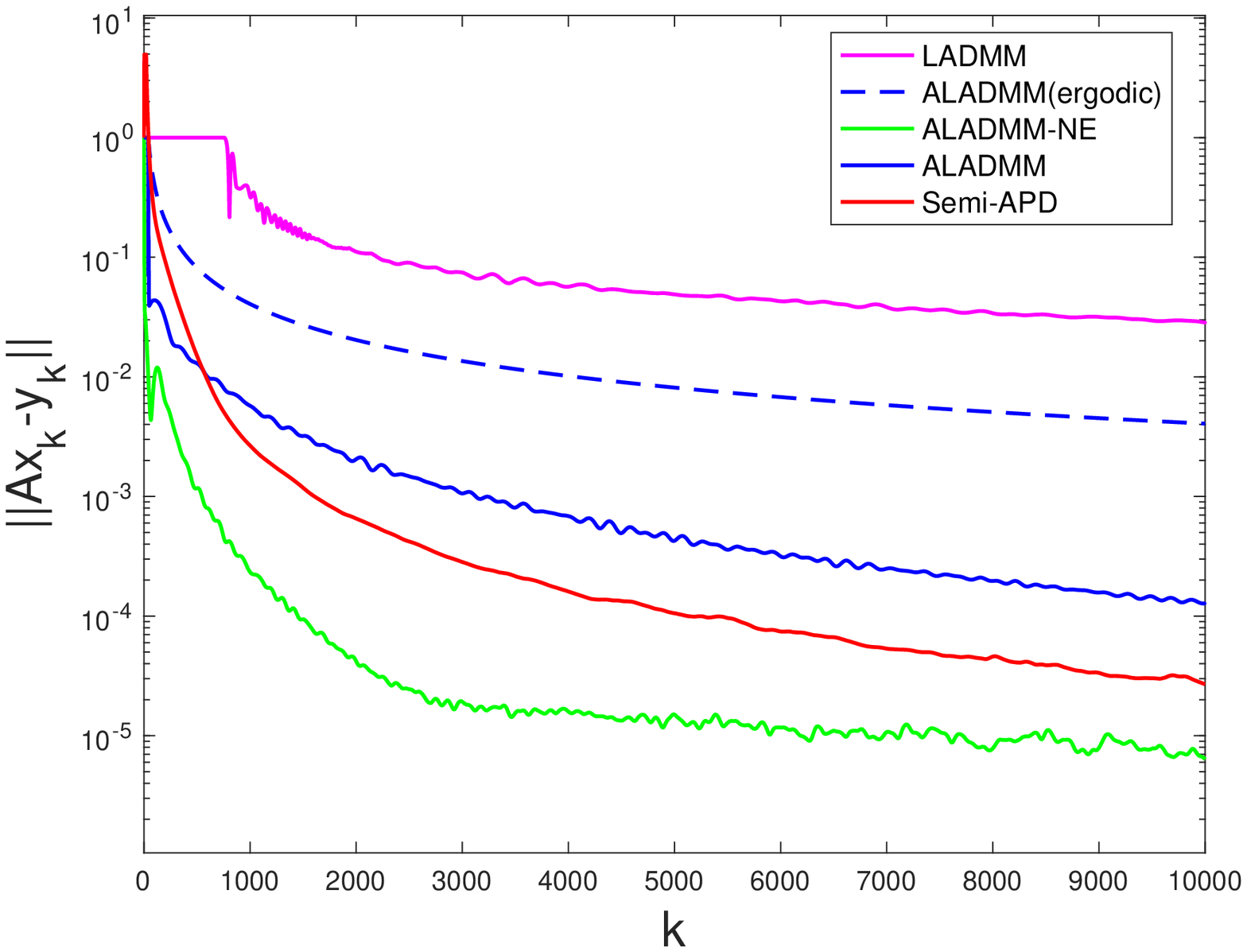}
		\end{subfigure}
		\begin{subfigure}[h]{0.35\textwidth}
			\centering
			\includegraphics[width=\textwidth]{./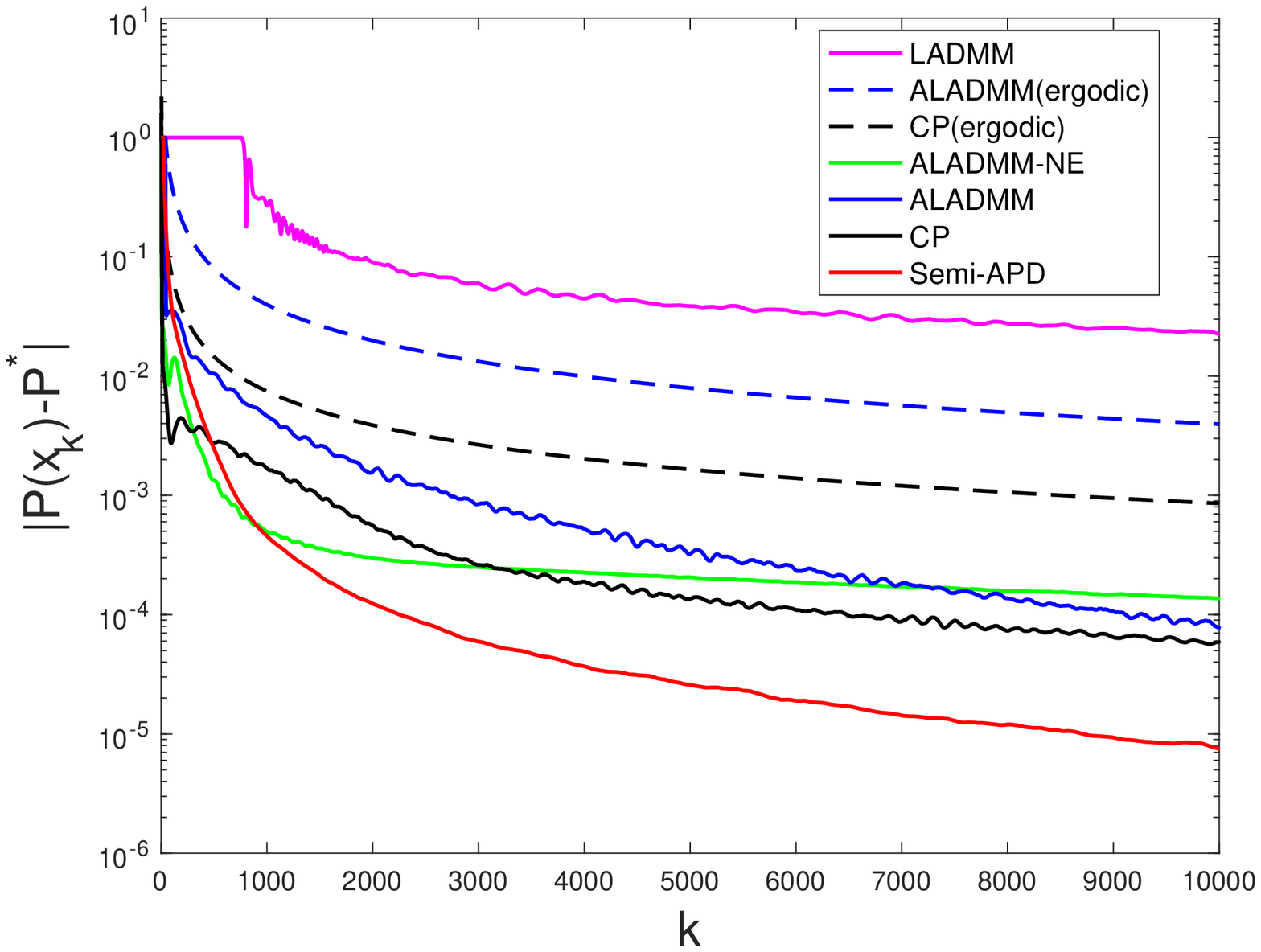}
		\end{subfigure}
		\begin{subfigure}[h]{0.35\textwidth}
			\centering
			\includegraphics[width=\textwidth]{./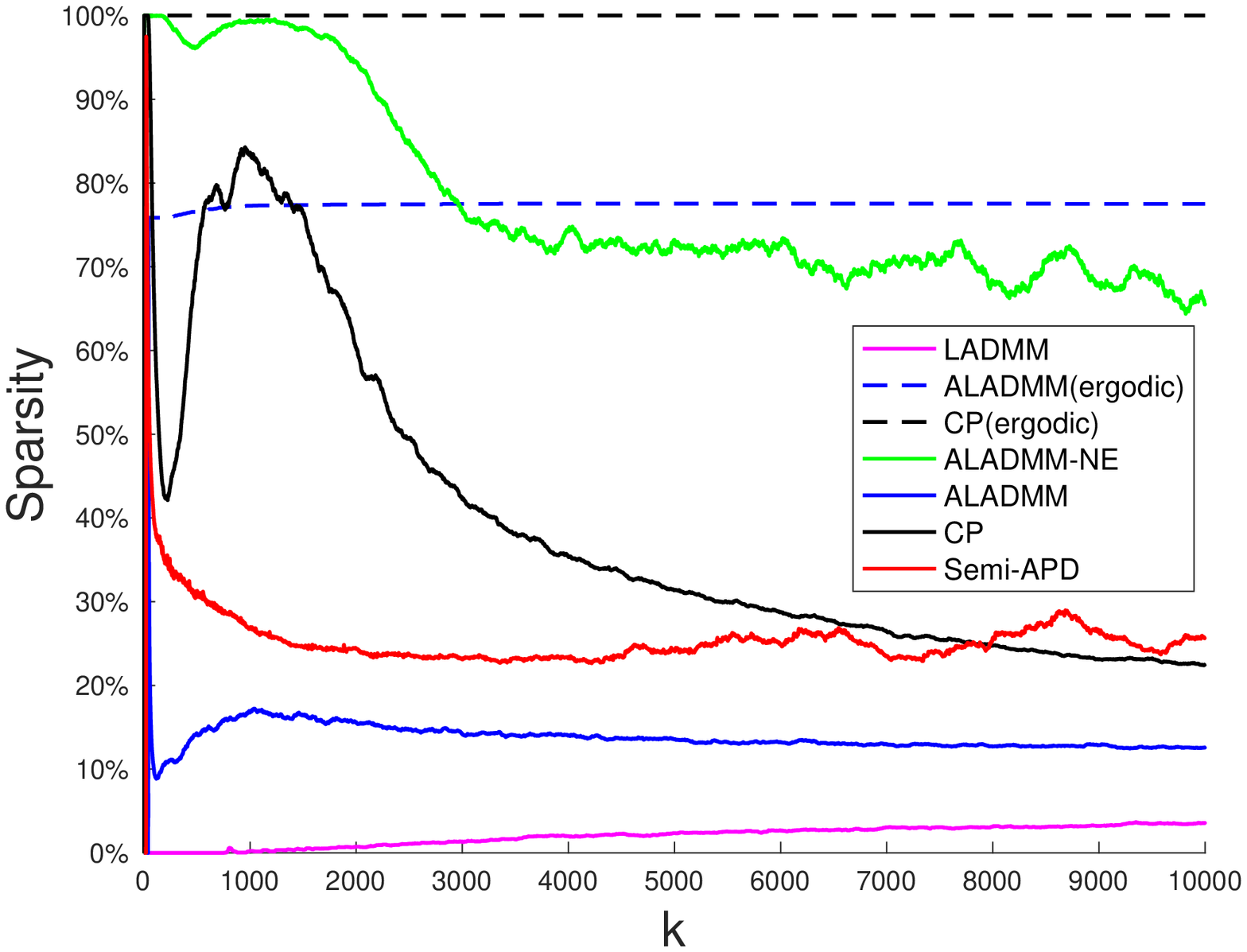}
		\end{subfigure}
		\caption{Numerical results of the LAD regression problem \cref{eq:lasso} under {\it Case 1}. The problem size is $(m,n) = (400,4000)$ and the sparse vector $x^{\#}$ has 10\% nonzero elements.}
		\label{fig-prob2}
	\end{figure}
	
	%
	%
	%
	%
	
	Numerical outputs of {\it Case 1} and {\it Case 2} are displayed
	respectively in \cref{fig-prob2,fig-prob3}. In {\it Case 1}, our Semi-APD performs the best for the objective residual $|F(x_k,y_k)-F^*|$ (top left) and the composite objective residual $\vert P(x_k)-P^*\vert $ (bottom left). For the violation of feasibility $\nm{Ax_k-y_k}$ (top right), however, Semi-APD is inferior to ALADMM-NE but still better than others. As the theoretical rates of ALADMM and CP are in ergodic sense, we also plot the errors in terms of the averaged sequences. It can be seen that ergodic convergence is much slower than that in nonergodic sense.
	
	In the bottom right part of \cref{fig-prob2}, we also report the sparsity of all iterative sequences. As we can see, except ALADMM-NE, all the methods maintain nice sparsity. More precisely, the standard LADMM provides a very sparse solution, and the sequences of the rest methods are dense in the beginning but become more sparse as the iteration step grows up. Besides, ergodic sequences perform not well because the average operation breaks the sparsity.
	

	For {\it Case 2}, the objective $f$ is strongly convex. From \cref{fig-prob3}, we observe that Semi-APD has fast convergence for the composite objective residual but is not competitive with New-PD for the objective residual and the violation of feasibility. However, New-PD requires three proximal calculations in each iteration and provides poor sparsity. As a contrast, our Semi-APD generates almost the same sparsity as ALADMM, Fast-AMA and LADMM. Again, ergodic sequences are inferior to those in nonergodic sense, for both convergence rate and sparsity.
	
	\begin{figure}[H]
		\centering
		\begin{subfigure}[h]{0.4\textwidth}
			\centering
			\includegraphics[width=\textwidth]{./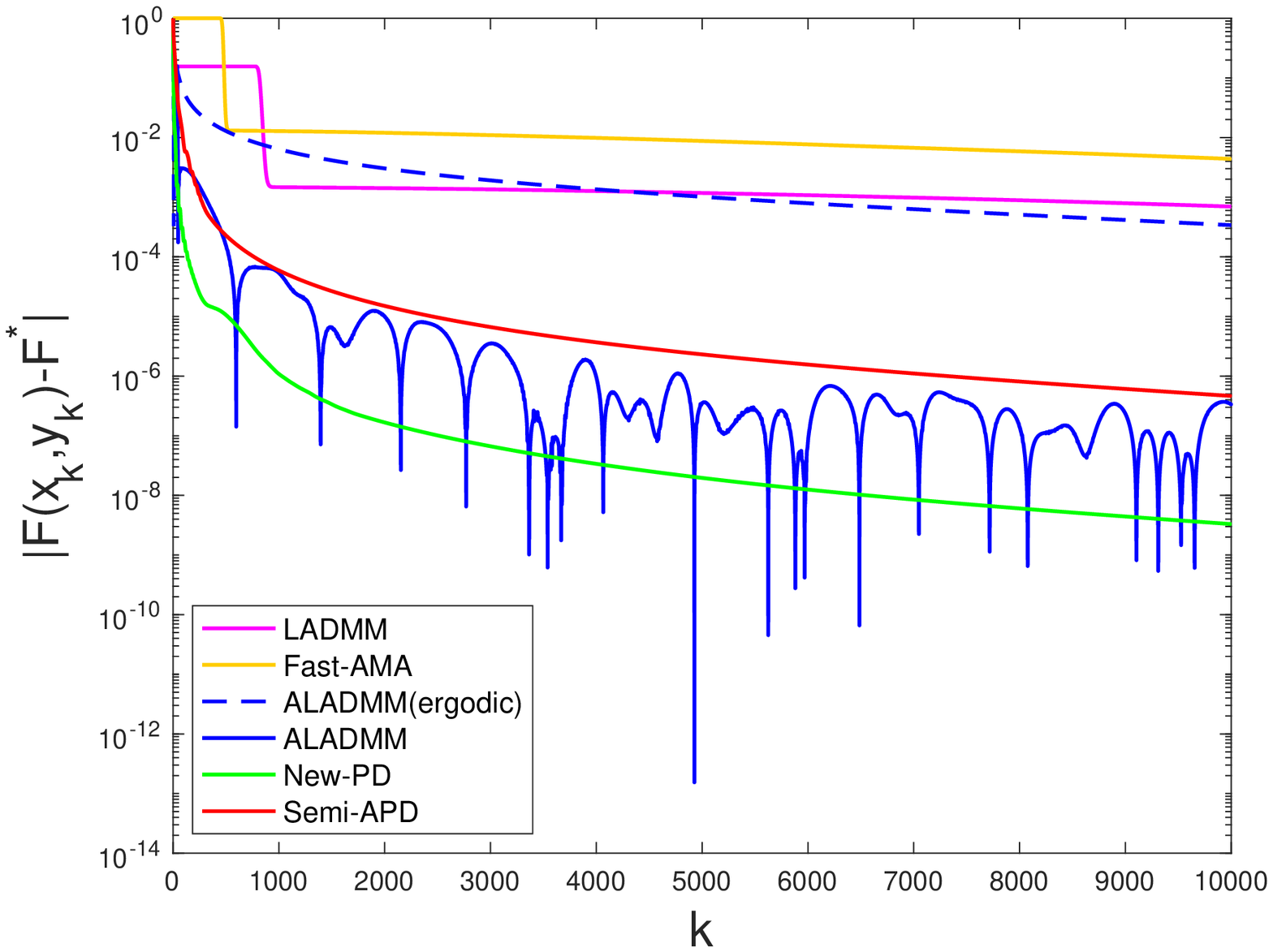}
		\end{subfigure}
		\begin{subfigure}[h]{0.4\textwidth}
			\centering
			\includegraphics[width=\textwidth]{./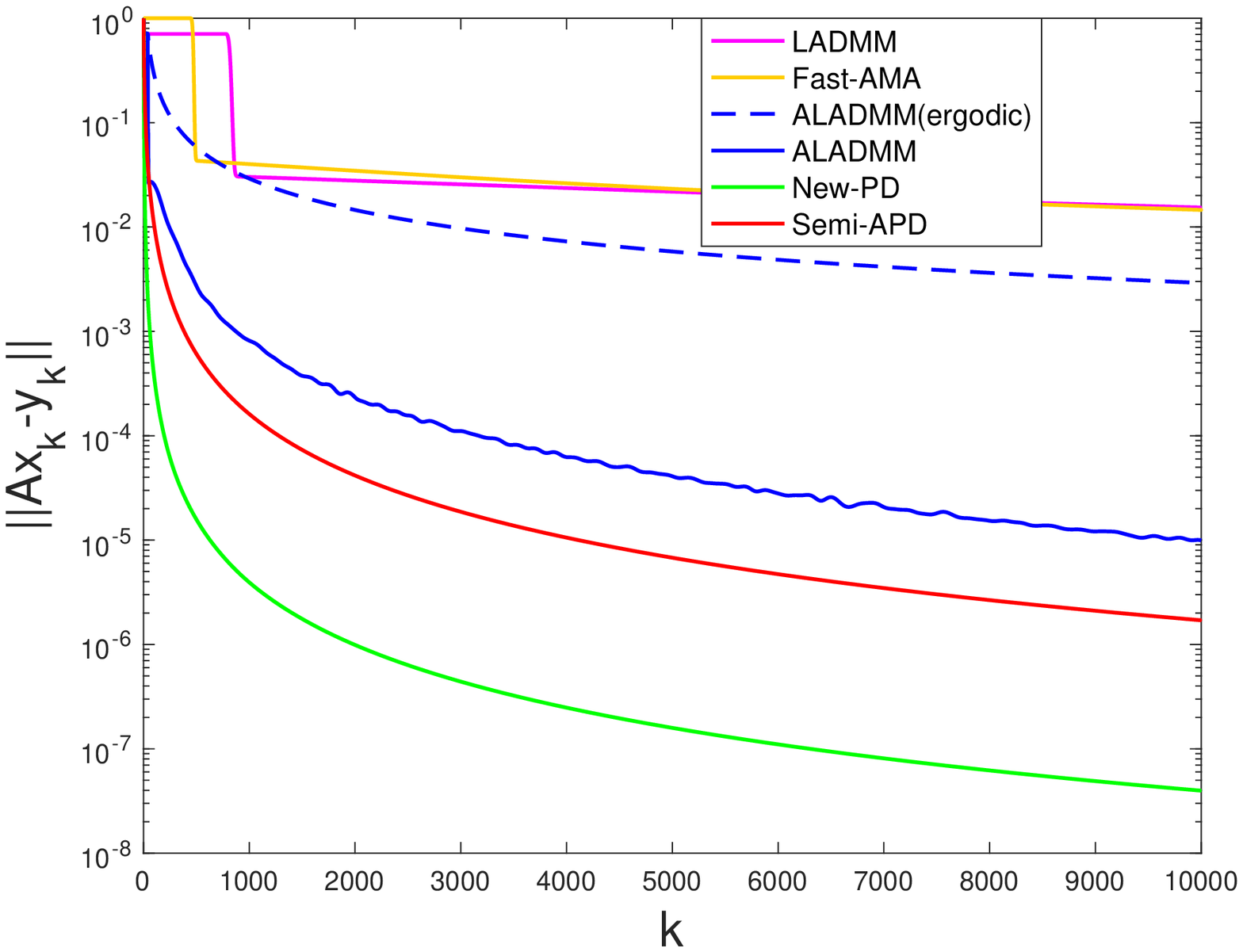}
		\end{subfigure}
		
		\begin{subfigure}[h]{0.4\textwidth}
			\centering
			\includegraphics[width=\textwidth]{./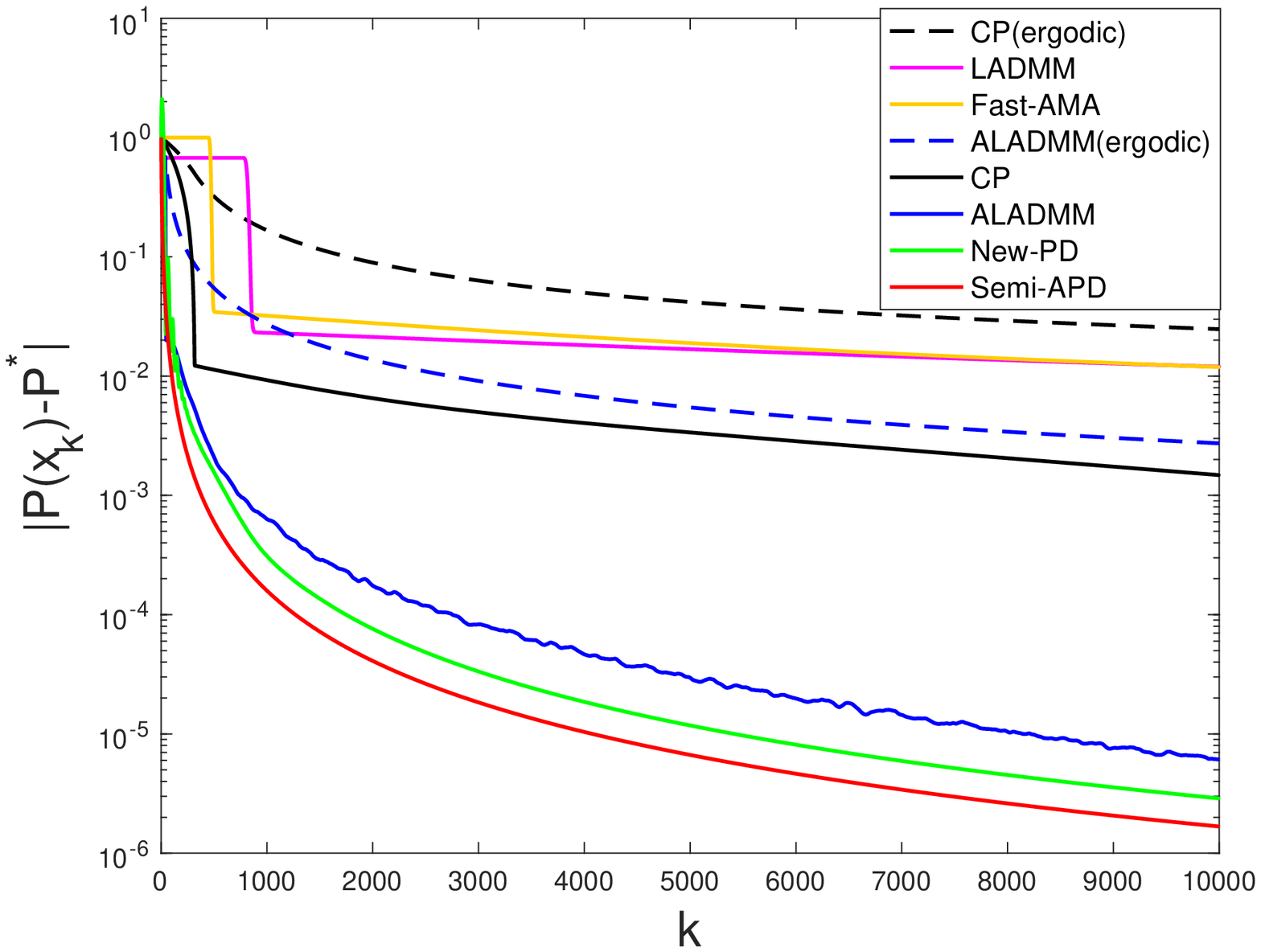}
		\end{subfigure}
		\begin{subfigure}[h]{0.4\textwidth}
			\centering
			\includegraphics[width=\textwidth]{./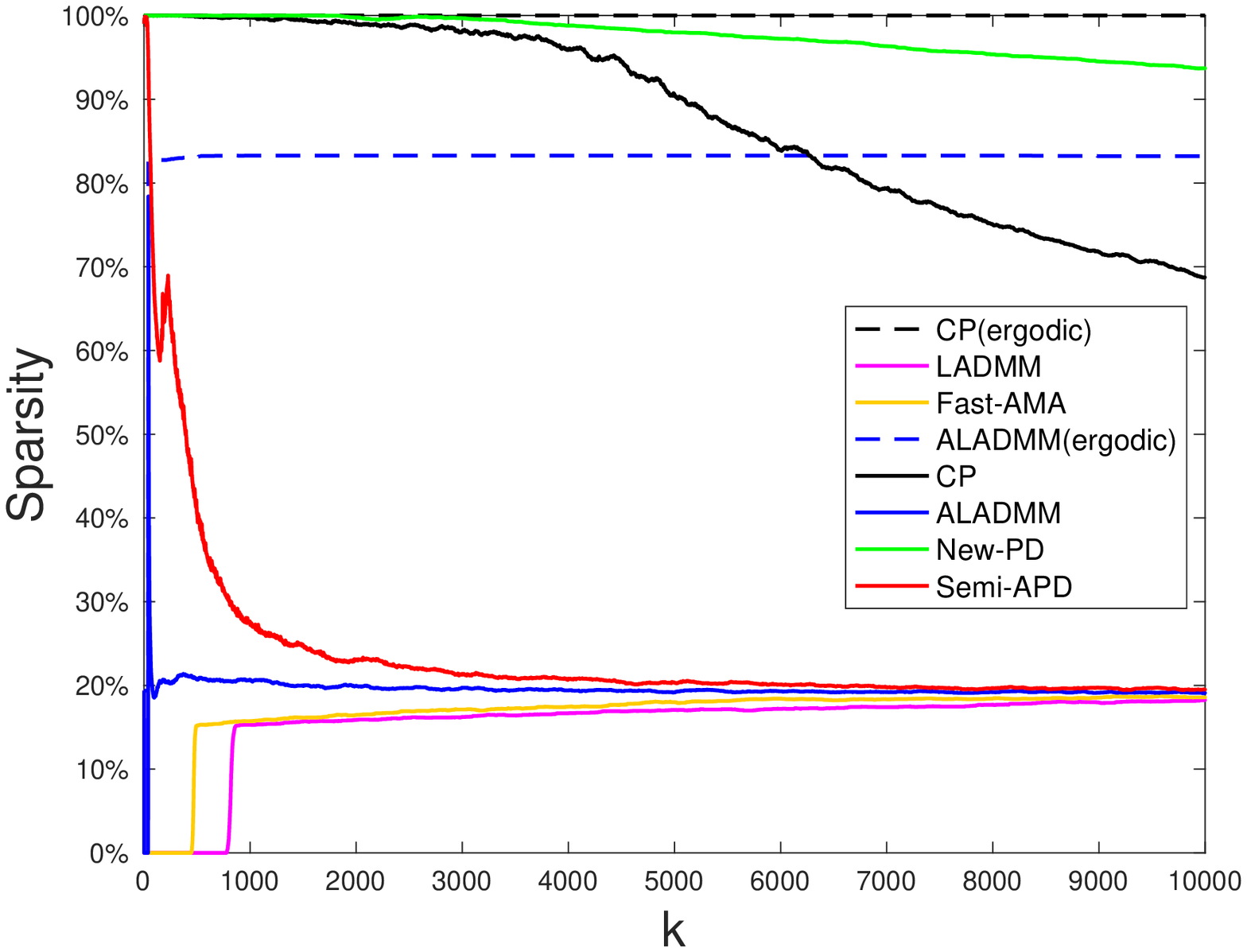}
		\end{subfigure}
		\caption{Numerical results of the LAD regression problem \cref{eq:lasso} under {\it Case 2}, with the same problem size and sparsity setting as {\it Case 1}.}
		\label{fig-prob3}
	\end{figure}

	
	\subsection{Support vector machine}
	\label{sec:num-svm}
	Given a matrix $W\in\R^{m\times n}$, the bias vector $b\in\R^m$ and the classify vector $c\in\R^m$, consider the SVM problem
	\begin{equation}\label{eq:svm}
		\min_{x\in\R^n}\,F(x): = g(x) + \frac{1}{m}\sum_{j= 1}^{m}\ell(c_j,w_j^\top x-b_j),
	\end{equation}
	where $w_j$ is the $j$-th column of $W,\,\ell(a,b) := \max(0,1-ab)$  is the Hinge loss function and $g:\R^n\to \R_+$ is a regularization function. We follow \cite{Xu2017} to generate the problem data and consider
	\begin{itemize}
		\item Binary linear SVM 
		: $g = \rho\nm{\cdot}_1$ with $\rho=0.2$,
		\item
		Elastic net regularized SVM 
		: $g = \rho_1/2\nm{\cdot}^2+\rho_2\nm{\cdot}_1$, with $\rho_1=0.05$ and $\rho_2=0.5$.
	\end{itemize}

		Numerical outputs of two SVM problems are plotted in \cref{fig-SVM-1,fig-SVM-2}. For both two cases, our Semi-APD outperforms others on the objective residual $|F(x_k,y_k)-F^*|$ and the composite objective residual $|P(x_k)-P^*|$.  In \cref{fig-SVM-1},  it provides the smallest violation of feasibility which is comparable with that of ALADMM. While in \cref{fig-SVM-2}, ALADMM is superior than our Semi-APD for the violation of feasibility. In addition, except the objective residual in \cref{fig-SVM-1}, the ergodic sequences of CP and ALADMM  provide slow convergence.
		
		\begin{figure}[H]
			\centering
			\begin{subfigure}[h]{0.4\textwidth}
				\centering
				\includegraphics[width=\textwidth]{./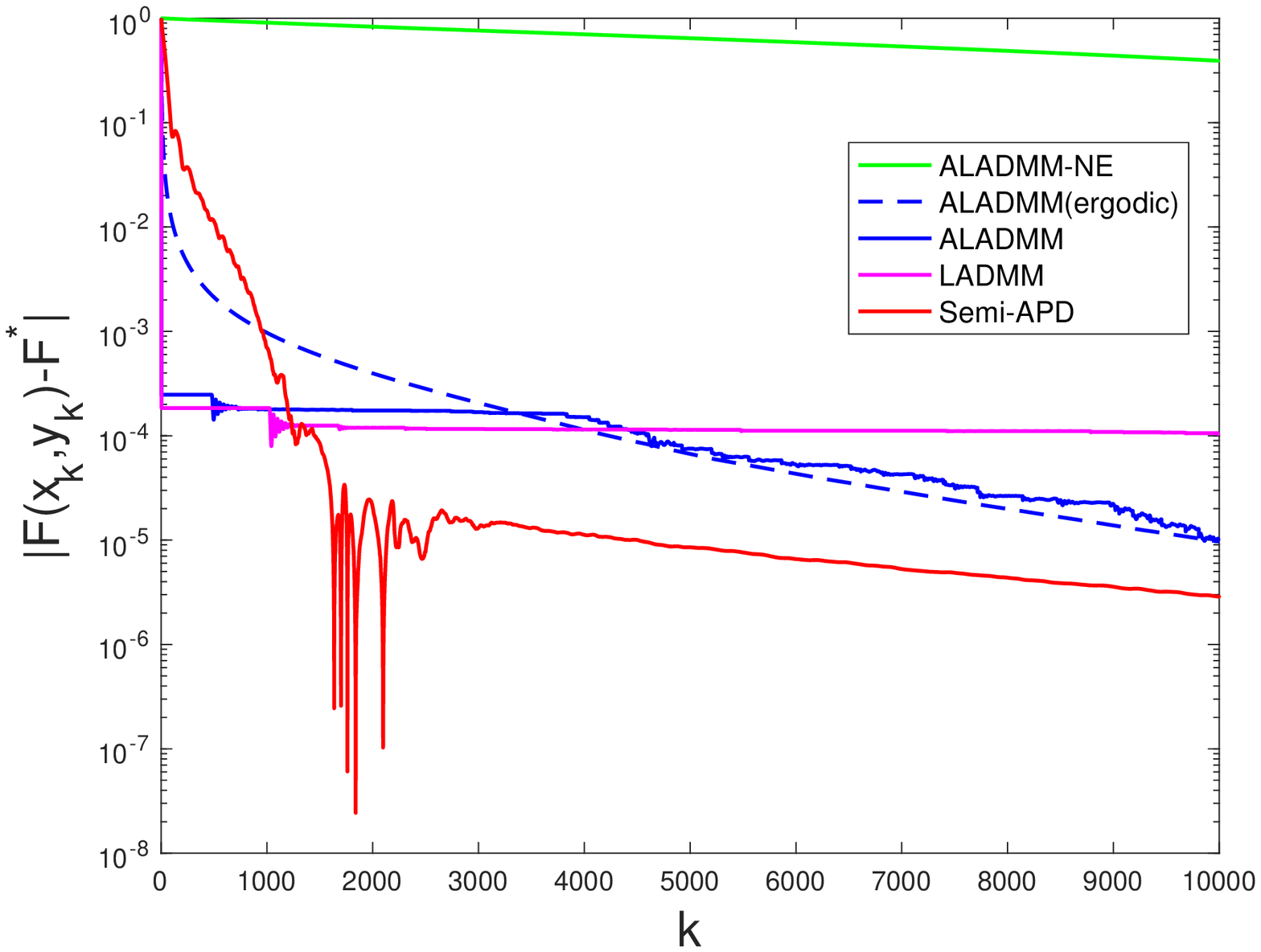}
			\end{subfigure}
			\begin{subfigure}[h]{0.4\textwidth}
				\centering
				\includegraphics[width=\textwidth]{./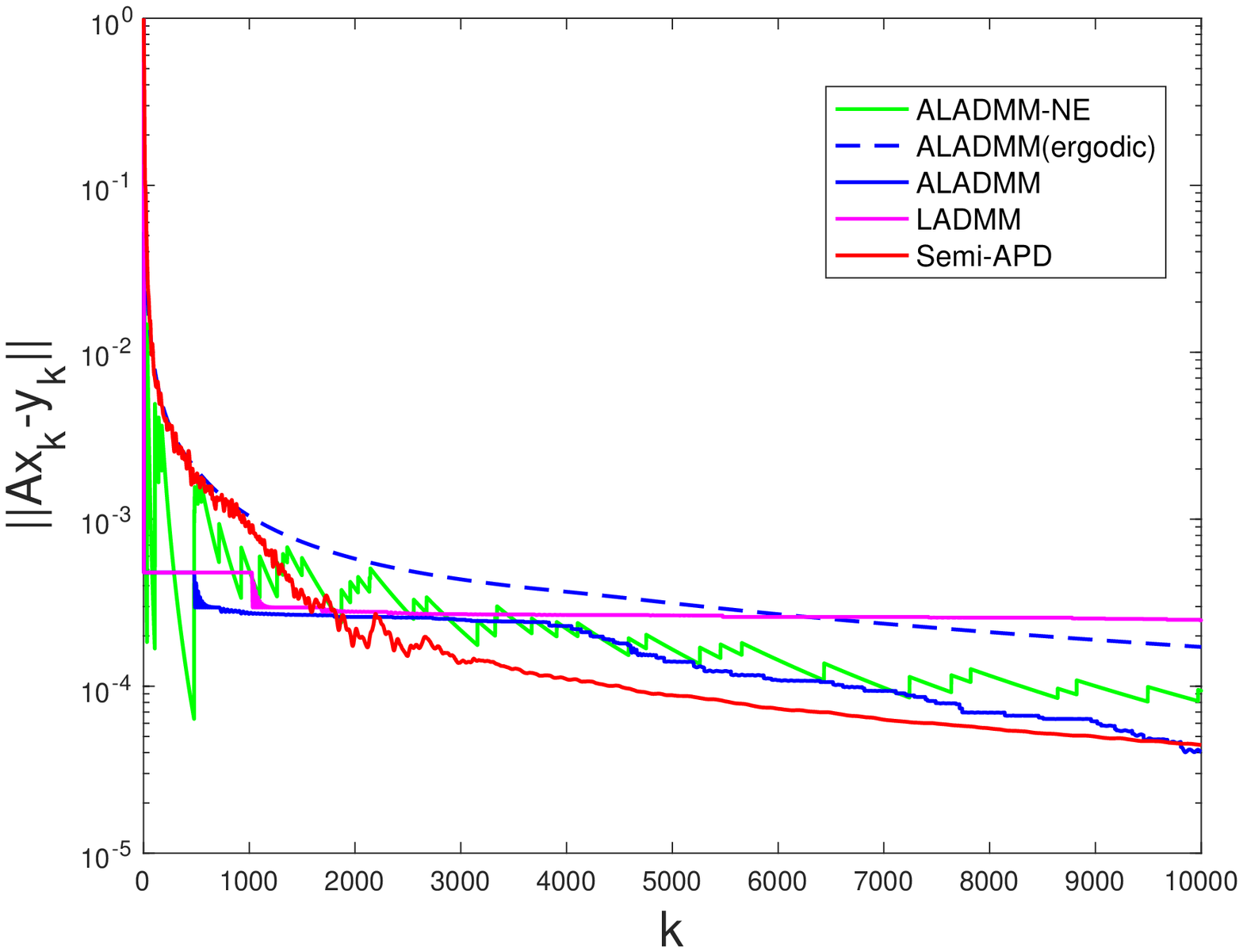}
			\end{subfigure}
			\begin{subfigure}[h]{0.4\textwidth}
				\centering
				\includegraphics[width=\textwidth]{./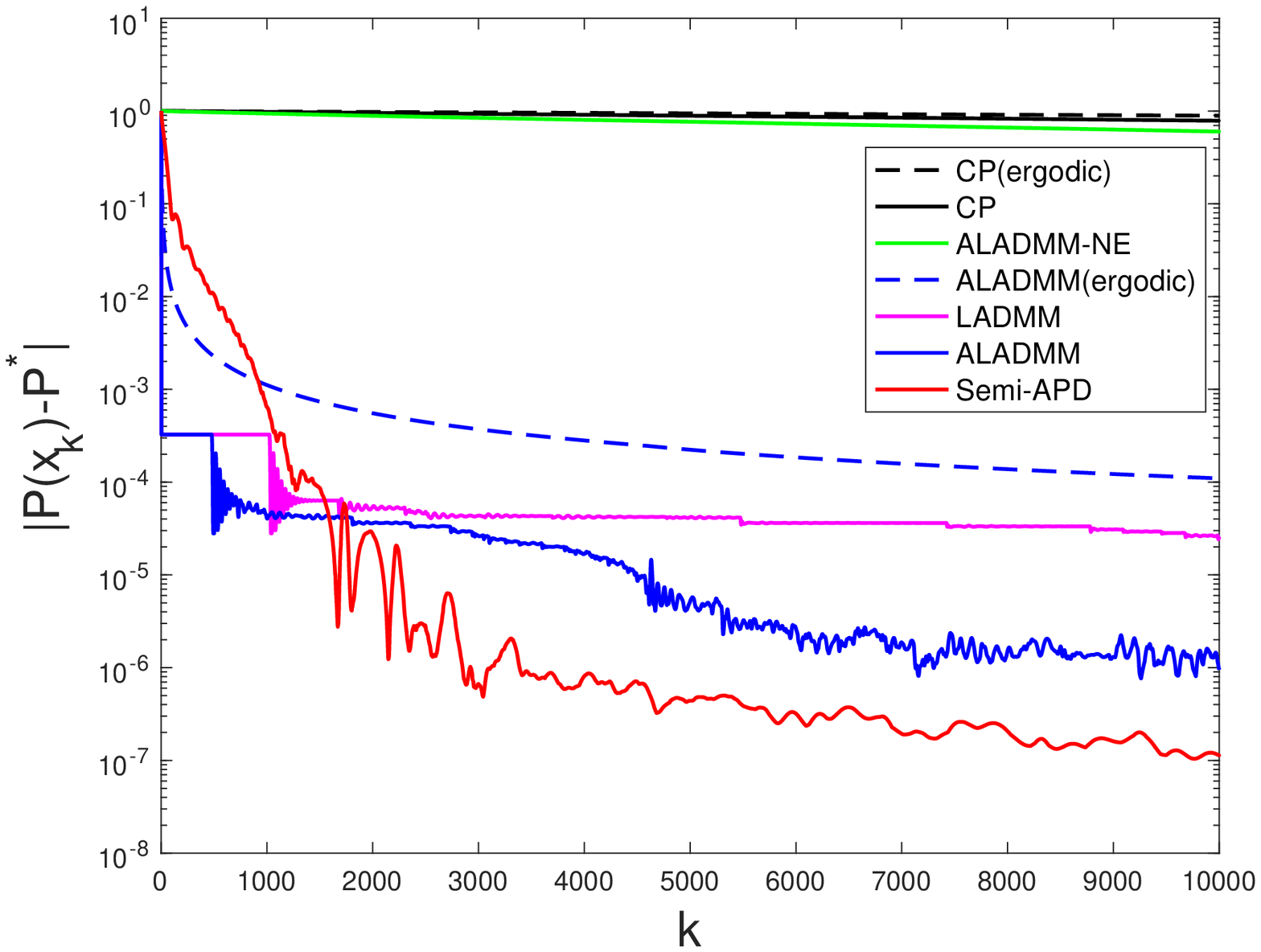}
			\end{subfigure}
			\caption{Numerical results of the binary linear SVM with $(m,n) = (100,500)$.}
			\label{fig-SVM-1}
		\end{figure}

		\begin{figure}[H]
			\centering
			\begin{subfigure}[h]{0.4\textwidth}
				\centering
				\includegraphics[width=\textwidth]{./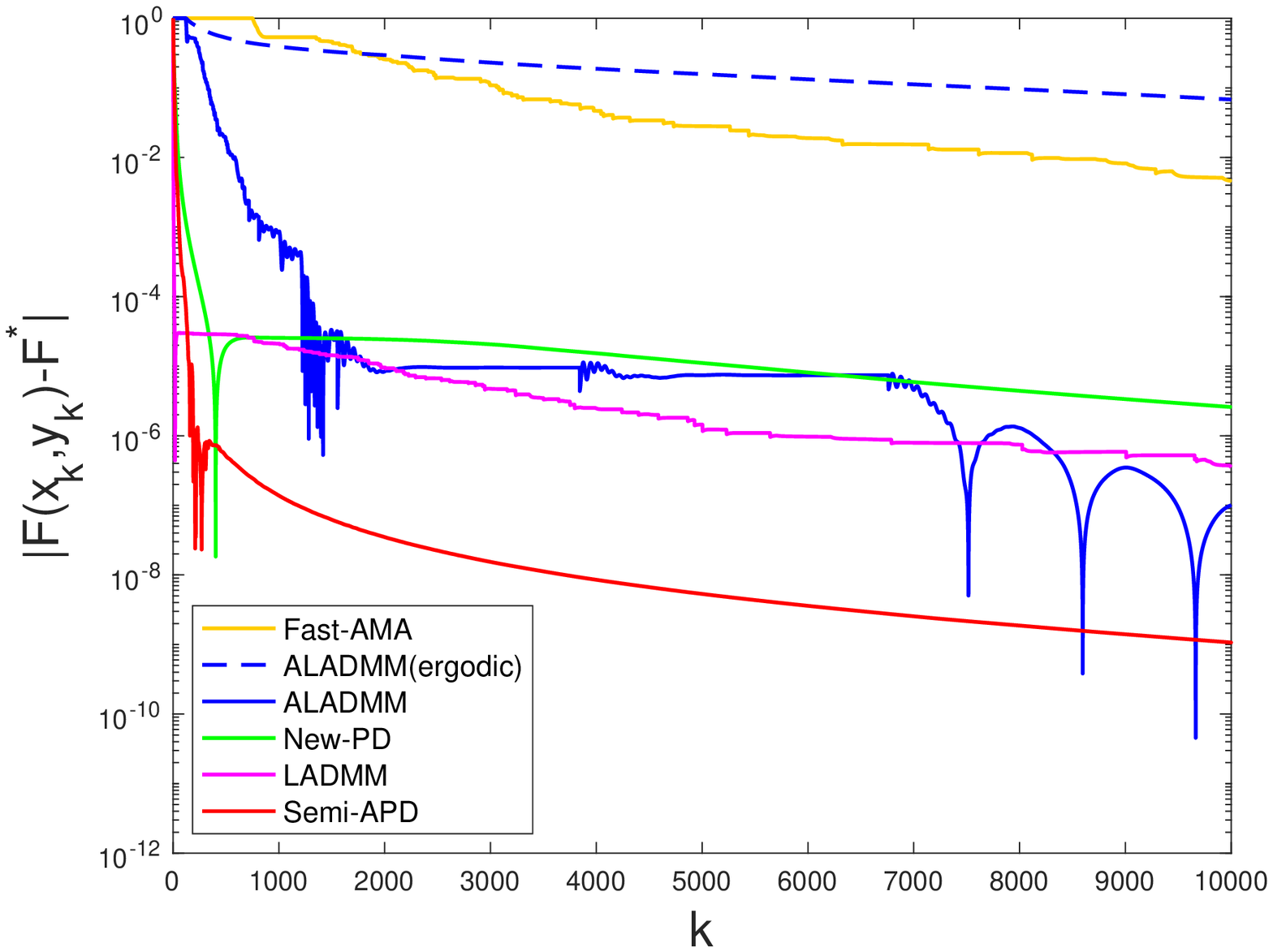}
			\end{subfigure}
			\begin{subfigure}[h]{0.4\textwidth}
				\centering
				\includegraphics[width=\textwidth]{./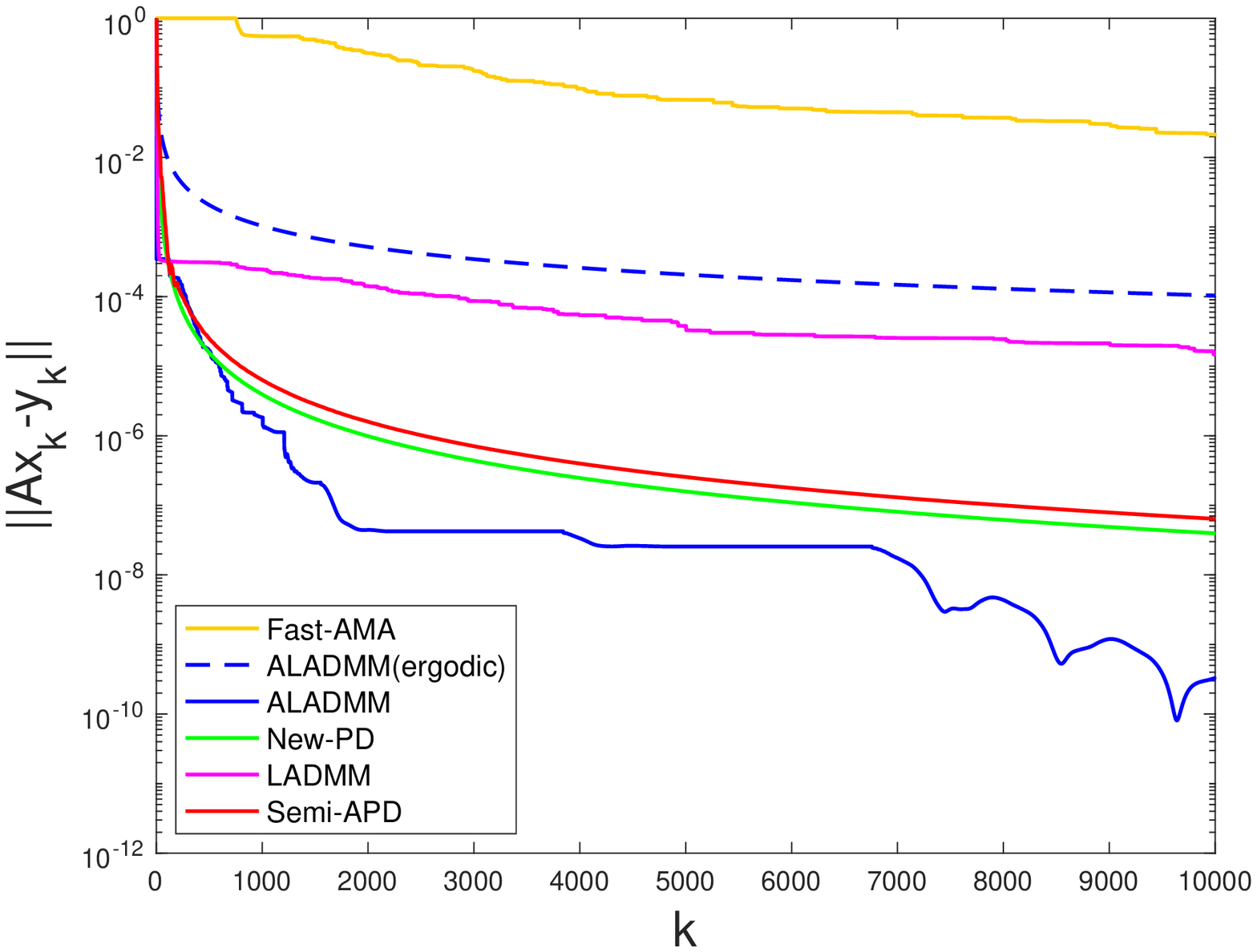}
			\end{subfigure}
			\begin{subfigure}[h]{0.4\textwidth}
				\centering
				\includegraphics[width=\textwidth]{./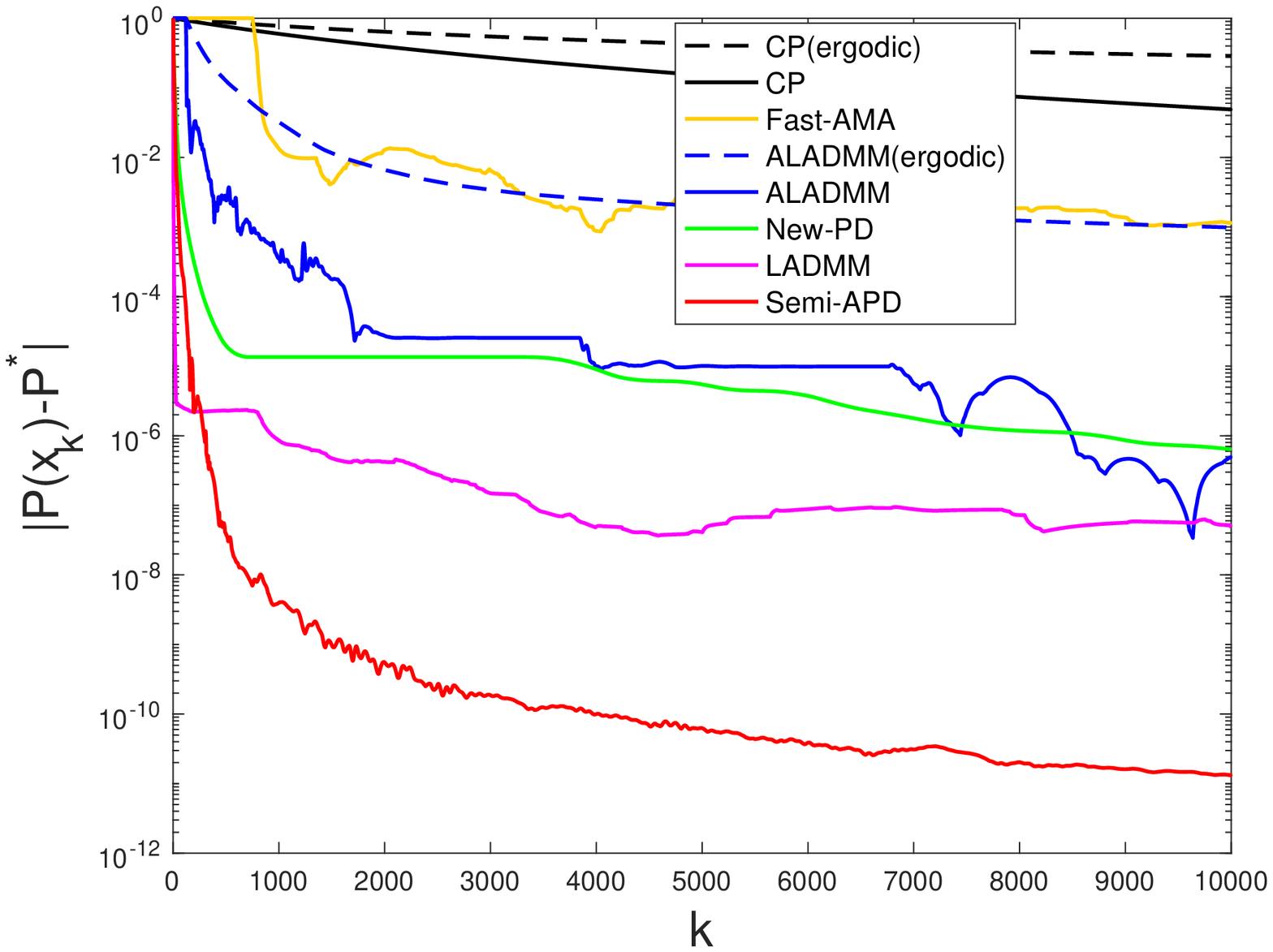}
			\end{subfigure}
			\caption{Numerical results of the elastic net regularized SVM with $(m,n) = (100,500)$.}
			\label{fig-SVM-2}
		\end{figure}

\section{Conclusions and Discussions}
\label{sec:conclu}
In this work, we present a self-contained differential equation solver approach 
for separable convex optimization problems. A novel dynamical system is introduced, and proper time discretizations lead to two families of primal-dual methods with acceleration, linearization and splitting. Besides, nonergodic optimal mixed-type convergence rates are established by a unified Lyapunov function. 

We also conduct some numerical experiments to validate the practical performances of the proposed method, regrading the objective residual, the feasibility violation and the capability for sparsity recovering. Although it does not always outperform existing algorithms on the convergence behavior, it maintains desired sparsity and does never work significantly worse. 

Below, we summarize some discussions and perspectives.
\subsection{Well-posedness of the nonsmooth case}
\label{sec:discuss-well}
To study the differential inclusion \cref{eq:2bapd-inclu}, the Moreau--Yosida approximation is an effective tool for solution existence (cf.\cite{luo_accelerated_2021}).
In \cite{attouch_fast_2021}, Attouch et al. established the existence of a global $C^1$ solution to a temporally rescaled inertial augmented Lagrangian system, which is a second order inclusion system for the convex separable problem \cref{eq:min-f-g-A-B}. However, as mentioned in \cite[Section 4]{attouch_fast_2021}, well-posedness under general nonsmooth setting deserves further study. 
\subsection{The implicit discretization of $\lambda$}
\label{sec:discuss-im}
Among the proposed algorithms in this work, we excluded the implicit choice $\barlk = \lambda_{k+1}$, which makes $x_{k+1}$ and $y_{k+1}$ coupled with each other. Let us take the scheme \cref{eq:2bapd-im-x-im-y-lbar-arg} as an example, which can be formulated by that
\[
\left\{
\begin{aligned}
	x_{k+1}
	={}&\prox_{s_kf}^{\mathcal X}\left(\widetilde{x}_k
	-s_kA^{\top}\lambda_{k+1}\right),\quad s_k = \alpha^2_{k}/\eta_{f,k},
	\\
	y_{k+1}
	={}&\prox_{\tau_kg}^{\mathcal Y}\left(\widetilde{y}_k
	-\tau_kB^{\top}\lambda_{k+1}\right),\quad \tau_k = \alpha^2_{k}/\eta_{g,k},
	\\
	\lambda_{k+1}={}&
	\widetilde{\lambda}_k+\theta_{k+1}^{-1}\left(Ax_{k+1}+By_{k+1}-b\right),\quad \widetilde{\lambda}_k =\lambda_{k}-\theta_{k}^{-1}(Ax_k+By_k-b).
\end{aligned}
\right.
\]
Eliminating $x_{k+1}$ and $y_{k+1}$ gives a nonlinear equation in terms of $\lambda_{k+1}$:
\begin{equation}
	\label{eq:2bapd-im-x-im-y-lbar-sub}
	\theta_{k+1}\lambda_{k+1}-A\prox_{s_kf}^{\mathcal X}\big(\widetilde{x}_k
	-s_kA^{\top}\lambda_{k+1}\big)
	-B\prox_{\tau_kg}^{\mathcal Y}\big(\widetilde{y}_k
	-\tau_kB^{\top}\lambda_{k+1}\big)
	=\theta_{k+1}\widetilde{\lambda}_k- b.
\end{equation}
In addition, applying $\barlk = \lambda_{k+1}$ to \cref{eq:2bapd-ex-x-im-y-lbar-correc}, we obtain the corresponding nonlinear equation that enjoys a similar structure with \cref{eq:2bapd-im-x-im-y-lbar-sub}. Following the spirit of \cite{li_asymptotically_2020,luo_acc_primal-dual_2021,luo_primal-dual_2022,niu_sparse_2021}, one can call the semi-smooth Newton iteration \cite{facchinei_finite-dimensional_2003_vol2} to solve \cref{eq:2bapd-im-x-im-y-lbar-sub} efficiently, provided that the problem itself has nice properties such as sparsity and semismoothness. 
\subsection{Successive choice of $\lambda$}
\label{sec:discuss-suc}
As announced in \cref{sec:2bapd-im-x-im-y}, we restricted ourselves to the same choice $\lambda=\barlk$ for both $\partial_x\mathcal L(x,y,\lambda)$ and $\partial_y\mathcal L(x,y,\lambda)$. Thus, unlike the original ADMM \cref{eq:ADMM} and existing accelerated ADMM \cite{ouyang_accelerated_2015,tran-dinh_primal-dual_2015,Xu2017}, our methods do not involve simultaneously the augmented terms of $x$ and $y$. This can be recovered if we adopt different choices for $\lambda$. For example, one can apply \cref{eq:2bapd-im-x-im-y-lbar-lv} and $\barlk =\lambda_{k+1}$ to \eqref{eq:2bapd-im-x-im-y-lbar-x-arg} and \eqref{eq:2bapd-im-x-im-y-lbar-y-arg}, respectively. However, this successive way brings more cross terms, and the one-iteration analysis (cf. Lemmas \ref{lem:2bapd-im-x-im-y-one-step} and \ref{lem:2bapd-ex-x-im-y-one-step}) deserves further study. 
\subsection{The multi-block case}
\label{sec:multi}
Consider the multi-block case:
\begin{equation}\label{eq:multi}
	F(x)=\sum_{i=1}^{M}f_i(x_i),\quad \sum_{i=1}^{M}A_ix_i=b,
	\quad x=(x_1,x_2,\cdots,x_M),\,M\geq 3.
\end{equation}
It has been showed in \cite{chen_direct_2016} that the direct extension of ADMM is not necessarily convergent unless each $f_i$ is strongly convex (cf. \cite{han_note_2012}). For general convex case, some variants have been proposed with provable convergence \cite{dai_sequential_2016,han_augmented_2014,he_alternating_2012} and the sublinear rate $O(1/k)$ \cite{bai_generalized_2018,he_splitting_2015,he_full_2015}.

We claim that the continuous model \cref{eq:2bapd-inclu} and \cref{thm:ASPD-exp} can be extended to the multi-block case \cref{eq:multi}. As for the discrete level, parallel type methods (cf.\cref{eq:2bapd-im-x-im-y-lk-arg,eq:2bapd-ex-x-im-y-lk}) are more likely to be generalized to this case but more efforts are needed to study the rest Gauss-Seidel type algorithms.

\section*{Acknowledgments}
The authors would like to thank Professor Jun Hu for his fruitful guidances and suggestions about the revision of an early version of the manuscript. 

\appendix
\section{Proof of the Estimate \cref{eq:I3-2bapd-ex-x-im-y}}
\label{app:ex-x-im-y-I3}
Recall the identity \cref{eq:I3-2bapd-im-x-im-y-mid}:
\begin{equation}\label{eq:I3-app}
	\begin{aligned}
		\mathbb I_3
		=&
		\frac{\alpha_k(\mu_f-\gamma_{k+1})}{2}
		\nm{v_{k+1}-x^*}^2-\frac{\gamma_{k}}2
		\nm{v_{k+1}-v_k}^2
		+		\gamma_{k}
		\dual{v_{k+1}-v_k, v_{k+1}-x^*}.
	\end{aligned}
\end{equation}
By \eqref{eq:2bapd-ex-x-im-y-lbar-v-correc}, we have $p_{k+1}	\in\partial f_2(v_{k+1})+N_{\mathcal X}(v_{k+1})$ where
\begin{equation*}
	p_{k+1} :={} \mu_f(u_{k}-v_{k+1})-\gamma_k\frac{v_{k+1}-v_k}{\alpha_k}-\nabla f_1(u_k)-A^{\top}\barlk.
\end{equation*}
Rewrite the last term in \cref{eq:I3-app} by that
\[
\begin{aligned}
	\gamma_{k}
	\dual{v_{k+1}-v_k, v_{k+1} -x^*}
	={}&	\mu_f\alpha_k\dual{u_{k}-v_{k+1}, v_{k+1} -x^*}
	-\alpha_k	\dual{p_{k+1}, v_{k+1} -x^*}\\
	{}&\quad-\alpha_k\dual{\nabla f_1(u_k)+A^{\top}\barlk,v_{k+1}-x^*}.
\end{aligned}
\]
Invoking \cref{eq:x-y-z}, the first cross term is estimate as follows
\[
\begin{aligned}
	\mu_f\alpha_k\dual{u_{k}-v_{k+1}, v_{k+1} -x^*}
	\leq  {}&\frac{\mu_f\alpha_k}{2}\left(
	\nm{u_{k}-x^*}^2-\nm{v_{k+1}-x^*}^2
	\right).
\end{aligned}
\]
For the second term, we have
\[
\begin{aligned}
	{}&	
	-\alpha_k	\dual{p_{k+1}, v_{k+1} -x^*}
	\leq  -\alpha_k(f_2(v_{k+1})-f_2(x^*))\\
	=& -\alpha_k(f_2(x_{k+1})-f_2(x^*))
	-\alpha_k(f_2(v_{k+1})-f_2(x_{k+1})),
\end{aligned}
\]
and summarizing the above results gives
\begin{equation}\label{eq:I3-app-mid}
	\begin{aligned}
		\mathbb I_3
		\leq  &-\alpha_k(f_2(x_{k+1})-f_2(x^*))
		-\frac{\alpha_k\gamma_{k+1}}{2}
		\nm{v_{k+1}-x^*}^2\\
		{}&\quad			 -\alpha_k(f_2(v_{k+1})-f_2(x_{k+1}))
		-	\frac{\gamma_{k}}2\nm{v_{k+1}-v_k}^2\\		
		{}&\qquad+\frac{\mu_f\alpha_k}{2}
		\nm{u_{k}-x^*}^2-\alpha_k\big\langle\nabla f_1(u_k)+A^{\top}\barlk,v_{k+1}-x^* \big\rangle.
	\end{aligned}
\end{equation}

Let us focus on the last term in \cref{eq:I3-app-mid}.
By \eqref{eq:2bapd-ex-x-im-y-lbar-u-correc}, it follows that
\[
\begin{aligned}
	{}&-\alpha_k\dual{\nabla f_1(u_k)+A^{\top}\barlk,v_{k+1}-x^*}\\
	=&-\alpha_k\dual{\nabla f_1(u_k)+A^{\top}\barlk,v_{k+1}-v_k}
	-\alpha_k\dual{\nabla f_1(u_k)+A^{\top}\barlk,u_{k}-x^*}\\
	{}&\quad-\dual{\nabla f_1(u_k)+A^{\top}\barlk,u_{k}-x_k}.
\end{aligned}
\]
Using \cref{eq:def-mu}, the fact $(x_k,u_k)\in\mathcal X\times \mathcal X$ and \cref{assum:ex-x-im-y}, we obtain
\[
\begin{aligned}
	{}&	-\alpha_k\dual{\nabla f_1(u_k)+A^{\top}\barlk,u_{k}-x^*}
	-\dual{\nabla f_1(u_k)+A^{\top}\barlk,u_{k}-x_k}\\
	\leq  {}&\alpha_k\left(f_1(x^*)-f_1(u_k)+\dual{\barlk,A(x^*-u_k)}\right)
	-\frac{\mu_f\alpha_k}{2}\nm{u_k-x^*}^2\\
	&\quad+f_1(x_k)-f_1(u_k)+\dual{\barlk,A(x_k-u_k)}.
\end{aligned}
\]
We then shift $u_k$ to $x_{k+1}$ to get 
\[
\begin{aligned}
	{}&\frac{\mu_f\alpha_k}{2}
	\nm{u_{k}-x^*}^2-\alpha_k\big\langle\nabla f_1(u_k)+A^{\top}\barlk,v_{k+1}-x^* \big\rangle\\
	\leq  {}&\alpha_k\left(f_1(x^*)-f_1(x_{k+1})+\dual{\barlk,A(x^*-x_{k+1})}\right)\\
	&\quad+f_1(x_k)-f_1(x_{k+1})+\dual{\barlk,A(x_k-x_{k+1})}\\
	{}&\quad+(1+\alpha_k)\left(f_1(x_{k+1})-f_1(u_k)\right)-\alpha_k\dual{\nabla f_1(u_k),v_{k+1}-v_k}\\
	{}& \quad+(1+\alpha_k)\dual{\barlk,A(x_{k+1}-u_k)}
	-\alpha_k\dual{A^{\top}\barlk,v_{k+1}-v_k}.
\end{aligned}
\]
Thanks to the relation \cref{eq:xk1-uk}, the last term vanishes. After switching $\barlk$ to $\lambda^*$, we plug the above estimates into \cref{eq:I3-app-mid} to obtain
\[
\begin{aligned}
	\mathbb I_3
	\leq  {}&	\alpha_k\left(f(x^*)-f(x_{k+1})+\dual{\lambda^*,A(x^*-x_{k+1})}\right)
	-\frac{\alpha_k\gamma_{k+1}}{2}
	\nm{v_{k+1}-x^*}^2	\\
	&\quad+f_1(x_k)-f_1(x_{k+1})-\alpha_k(f_2(v_{k+1})-f_2(x_{k+1}))	-	\frac{\gamma_{k}}2\nm{v_{k+1}-v_k}^2\\
	{}& \qquad+(1+\alpha_k)\left(f_1(x_{k+1})-f_1(u_k)\right)
	-\alpha_k\dual{\nabla f_1(u_k),v_{k+1}-v_k}\\
	{}&\quad \qquad+\alpha_k\dual{\barlk-\lambda^*,A(x^*-x_{k+1})}+\dual{\barlk,A(x_k-x_{k+1})}	.
\end{aligned}
\]
This establishes \cref{eq:I3-2bapd-ex-x-im-y}.

\section{An Auxiliary  Differential Inequality}
\label{app:yt}
Denote by $W^{1,\infty}(0,\infty)$ the usual Sobolev space \cite{brezis_functional_2011} consisting of all real-valued functions, which, together with their generalized derivatives, belong to $L^\infty(0,\infty)$. 
Assume $y\in W^{1,\infty}(0,\infty)$ is positive and satisfies the differential inequality
\begin{equation}\label{eq:yt}
	y'(t)\leq -\frac{\sigma(t)y^{\nu}(t)}{\sqrt{Py^2(t)+Qy(t)+R^2}},\quad y(0) = 1,
\end{equation}
where $\nu,\, P,\,Q,\,R\geq 0$ are constants and $\sigma\in L^1(0,\infty)$ is nonnegative. We shall establish sharp decay estimates of  $y(t)$ under two cases that are particularly interested in this paper, and the corresponding discrete versions will be presented later in \cref{app:yk}.
\subsection{Case I}
\label{app:yt-case1}
Let us first consider: $\nu\geq 3/2,\,P = 0$ and $Q>0$. For this case, we cite the result from \cite[Lemma 5.1]{luo_universal_2022}.
\begin{lem}[\cite{luo_universal_2022}]
	\label{lem:est-yt-case1}
	Let $y\in W^{1,\infty}(0,\infty)$ be positive and satisfy \cref{eq:yt} with $\nu\geq 3/2,\,P = 0$ and $Q>0$. Then for all $t> 0$, we have
	\[
	y(t)\leq C_\nu\left\{
	\begin{aligned}
		{}&\left(\frac{\sqrt{Q}}{\Sigma(t)}\right)^{\frac{2}{2\nu-3}}+
		\left(\frac{R	}{\Sigma(t)}\right)^{\frac{1}{\nu-1}}			&&\text{if }\nu>3/2,		\\		
		{}&\exp\left(-\frac{\Sigma(t)}{2\sqrt{Q}}\right)+
		\left(		\frac{R}{\Sigma(t)}\right)^2&&\text{if }\nu=3/2,
	\end{aligned}
	\right.
	\]
	where $\Sigma(t) :=\int_{0}^{t}\sigma(s)\dd s$ and $C_\nu>0$ depends only on $\nu$.
\end{lem}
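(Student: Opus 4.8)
The plan is to treat \cref{eq:yt} with $P=0$ as a separable differential inequality and reduce it to an explicit integral bound that can be inverted. Since $\sigma\geq 0$ and $y>0$, the right-hand side is nonpositive, so $y$ is nonincreasing with $y(0)=1$, whence $0<y(t)\leq 1$ for every $t$. I would introduce the primitive
\[
G(r):=\int_{r}^{1}\frac{\sqrt{Q u+R^2}}{u^{\nu}}\,\dd u,\qquad r\in(0,1],
\]
which is $C^1$ and strictly decreasing with $G(1)=0$. Because $y$ is absolutely continuous and stays in the compact set $[y(t),1]\subset(0,1]$ on $[0,t]$, the composition $G\circ y$ is absolutely continuous and its derivative equals $-\sqrt{Qy+R^2}\,y'/y^{\nu}\geq \sigma$ almost everywhere; integrating over $[0,t]$ and using $G(y(0))=G(1)=0$ gives
\[
\int_{y(t)}^{1}\frac{\sqrt{Q u+R^2}}{u^{\nu}}\,\dd u=G(y(t))\geq \Sigma(t).
\]

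Next I would bound the integrand from above via the subadditivity $\sqrt{Qu+R^2}\leq \sqrt{Q}\,\sqrt{u}+R$, splitting the integral into two elementary power integrals. For $\nu>3/2$ both exponents $1/2-\nu$ and $-\nu$ are strictly below $-1$, so after discarding the nonnegative endpoint contributions at $u=1$ one obtains
\[
\Sigma(t)\leq \frac{\sqrt{Q}}{\nu-3/2}\,y(t)^{3/2-\nu}+\frac{R}{\nu-1}\,y(t)^{1-\nu}.
\]
Since the left side is dominated by a sum of two nonnegative terms, at least one of them is at least $\Sigma(t)/2$; inverting each inequality (legitimate because both powers of $y(t)$ carry negative exponents and are therefore decreasing in $y(t)$) yields the two candidate bounds $(\sqrt{Q}/\Sigma(t))^{2/(2\nu-3)}$ and $(R/\Sigma(t))^{1/(\nu-1)}$, each up to a constant depending only on $\nu$. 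Retaining the larger, and hence their sum, produces the claimed estimate for $\nu>3/2$.

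For the borderline exponent $\nu=3/2$ the first power integral degenerates to a logarithm, $\sqrt{Q}\int_{y(t)}^{1}u^{-1}\,\dd u=\sqrt{Q}\log(1/y(t))$, while the second stays polynomial, $2R\,y(t)^{-1/2}$; the same ``one term dominates'' dichotomy then returns either the exponential decay $\exp(-\Sigma(t)/(2\sqrt{Q}))$ or the polynomial tail $(R/\Sigma(t))^{2}$. I expect the only delicate points to be bookkeeping rather than conceptual: justifying the absolute continuity of $G\circ y$ for a merely Lipschitz $y$ that may be locally constant on $\{\sigma=0\}$, and controlling the $\nu$-dependent constants so that they collapse into a single $C_\nu$ that stays bounded as $\nu\downarrow 3/2$. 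Neither is a serious obstacle, and the argument follows \cite[Lemma 5.1]{luo_universal_2022}.
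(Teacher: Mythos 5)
Your proof is correct. Note first that the paper does not actually prove this lemma itself; it imports it verbatim from \cite[Lemma 5.1]{luo_universal_2022}, so the natural in-paper comparison is the proof of the companion result, \cref{lem:est-yt-case2}. Your argument shares its backbone: separate variables in \cref{eq:yt}, bound $\sqrt{Qu+R^2}\leq\sqrt{Q}\sqrt{u}+R$, and integrate to obtain $\int_{y(t)}^{1}\sqrt{Qu+R^2}\,u^{-\nu}\,\dd u\geq\Sigma(t)$; the absolute-continuity point you flag is harmless, exactly as you say, because $y$ is nonincreasing and hence trapped in the compact interval $[y(t),1]$, on which the integrand is bounded and $G$ is Lipschitz. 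Where you diverge is the inversion step: the paper (in \cref{lem:est-yt-case2}) constructs explicit candidate functions $Y_i(t)$ calibrated so that $G\bigl(\sum_i Y_i(t)\bigr)\leq\Sigma(t)$ and then invokes the monotone decrease of $G$ to conclude $y(t)\leq\sum_i Y_i(t)$, whereas you use the pigeonhole dichotomy (one of the two terms dominating $\Sigma(t)$ must be at least $\Sigma(t)/2$) and invert each power separately. Both are valid; yours is marginally more elementary, while the paper's yields cleaner explicit constants. One small correction: your concern about making $C_\nu$ ``stay bounded as $\nu\downarrow 3/2$'' is unnecessary --- the lemma allows $C_\nu$ to depend on $\nu$ arbitrarily, and indeed under either method the constant in the first bound necessarily blows up as $\nu\downarrow 3/2$ (the exponent $2/(2\nu-3)$ diverges), which is exactly why the case $\nu=3/2$ is stated separately with exponential decay.
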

\subsection{Case II}
\label{app:yt-case2}
We then move to another case: $\nu = 2$ and $P>0$.
\begin{lem}\label{lem:est-yt-case2}
	Let $y\in W^{1,\infty}(0,\infty)$ be positive and satisfy \cref{eq:yt} with $\nu=2$ and $P > 0$. Then for all $t> 0$, we have
	\begin{equation}	\label{eq:est-yt-case2}
		y(t)\leq 	
		\exp\left(-\frac{\Sigma(t)}{2\sqrt{P}}\right)
		+\frac{36Q}{\Sigma^2(t)}
		+\frac{6R}{\Sigma(t)},
	\end{equation}
	where $\Sigma(t)=\int_{0}^{t}\sigma(s)\dd s$.
\end{lem}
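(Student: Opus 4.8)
The plan is to collapse the differential inequality into a purely algebraic one‑variable inequality evaluated at the terminal time, and then to read off the three summands of \cref{eq:est-yt-case2} by a contradiction argument whose constants are tuned to be tight.

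First I would exploit the subadditivity of the square root, $\sqrt{Py^2+Qy+R^2}\le \sqrt{P}\,y+\sqrt{Q}\,\sqrt{y}+R$, which enlarges the denominator in \cref{eq:yt}. Since $\nu=2$, this yields the weaker but now separable bound
\[
y'(t)\le -\frac{\sigma(t)\,y^2(t)}{\sqrt{P}\,y(t)+\sqrt{Q}\,\sqrt{y(t)}+R}.
\]
The right‑hand side is nonpositive, so $y$ is nonincreasing with $0<y(t)\le y(0)=1$; dividing through by $y^2$ gives
\[
-\left(\frac{\sqrt{P}}{y}+\frac{\sqrt{Q}}{y^{3/2}}+\frac{R}{y^2}\right)y'\ge \sigma .
\]
Each term on the left is an exact time derivative, so integrating over $[0,t]$ and writing $z:=y(t)$, $S:=\Sigma(t)$ produces
\[
\sqrt{P}\,\ln\tfrac1z+\frac{2\sqrt{Q}}{\sqrt{z}}+\frac{R}{z}\ \ge\ S+2\sqrt{Q}+R\ \ge\ S,
\]
where the nonnegative constants arising from the lower limit $y(0)=1$ have simply been dropped.

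The decisive step is a contradiction argument calibrated to the exact constants in \cref{eq:est-yt-case2}. Suppose $z$ exceeded the claimed bound $B:=\exp\!\big(-S/(2\sqrt{P})\big)+36Q/S^2+6R/S$. Because the three summands of $B$ are nonnegative, $z>B$ forces $z$ to exceed each of them separately, and a one‑line computation then bounds the three pieces of the integrated inequality by $\tfrac{S}{2}$, $\tfrac{S}{3}$ and $\tfrac{S}{6}$ respectively (for instance $z>36Q/S^2$ gives $\sqrt{z}>6\sqrt{Q}/S$, hence $2\sqrt{Q}/\sqrt{z}<S/3$). Their sum is strictly less than $S$ because $\tfrac12+\tfrac13+\tfrac16=1$, contradicting the displayed inequality; therefore $z\le B$, which is exactly \cref{eq:est-yt-case2}.

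I expect the only genuine subtlety to be this final calibration: the coefficients $2,36,6$ and the partition $\tfrac12+\tfrac13+\tfrac16=1$ are chosen precisely so that the three contributions saturate $S$ with no slack, which is what makes the stated estimate sharp. Degenerate situations ($Q=0$ or $R=0$, or the vacuous case $\Sigma(t)=0$) require no extra work: one merely omits the corresponding term, which only strengthens the strict inequality, and I would note this in passing rather than treat it as a separate case.
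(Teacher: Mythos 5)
Your proof is correct and follows essentially the same route as the paper: enlarge the denominator via subadditivity of the square root, separate variables, integrate using $y(0)=1$, and split $\Sigma(t)$ into the shares $\tfrac12+\tfrac13+\tfrac16$ with exactly the thresholds $e^{-\Sigma(t)/(2\sqrt{P})}$, $36Q/\Sigma^2(t)$, $6R/\Sigma(t)$ that make each piece saturate its share. The only cosmetic difference is in the packaging of the last step: the paper keeps the constants from the lower integration limit and compares $G(y(t))\ge \Sigma(t)\ge G(Y_1+Y_2+Y_3)$ for an explicit decreasing function $G$, whereas you drop those constants and run the equivalent termwise comparison as a contradiction argument.
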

\begin{proof}
	From \cref{eq:yt} we obtain
	\[
	y'(t)\leq -\frac{\sigma(t)y^{2}(t)}{\sqrt{P}y(t)+ \sqrt{Q y(t)}+R},
	\]
	which implies
	\[
	\left[\sqrt{P}y^{-1}(t) + \sqrt{Q}y^{-3/2}(t)+Ry^{-2}(t)\right]y'(t)\leq-\sigma(t).
	\]
	Since $y(0) = 1$, integrating over $(0,t)$ gives 
	\begin{equation}\label{eq:y-Sigma}
		\sqrt{P}\ln \frac{1}{y(t) }+
		2\sqrt{Q}\big(y^{-1/2}(t)-1\big)+R\left(y^{-1}(t)-1\right)\geq \int_{0}^{t}\sigma(s)\dd s=
		\Sigma(t).
	\end{equation}
	
	Define $G:(0,\infty)\to[0,\infty)$ as follows
	\[
	G(w): = \sqrt{P}\ln \frac{1}{w}+ 2\sqrt{Q}\big(w^{-1/2}-1\big)+R\left(w^{-1}-1\right)
	\quad \forall\, w>0.
	\]
	Besides, let 
	\[
	Y_1(t) = {}
	\exp\left(-\frac{\Sigma(t)}{2\sqrt{P}}\right)
	,\quad
	Y_2(t) = {}\frac{Q}{\left(\sqrt{Q}+\frac{1}{6}\Sigma(t)\right)^2},\quad\text{and}\quad
	Y_3(t) = {}\frac{R}{R+\frac{1}{6}\Sigma(t)}.
	\]
	One observes 
	\[
	\sqrt{P}\ln \frac{1}{Y_1(t) }=3
	\sqrt{Q}\big(Y_2^{-1/2}(t)-1\big)=3R\left(Y_3^{-1}(t)-1\right)=\frac{1}{2}\Sigma(t),
	\]
	and it follows that
	\[
	\begin{aligned}
		G(Y(t))
		\leq{}& \sqrt{P}\ln \frac{1}{Y_1(t) }+
		2\sqrt{Q}\big(Y_2^{-1/2}(t)-1\big)+R\left(Y_3^{-1}(t)-1\right)
		=\Sigma(t),
	\end{aligned}
	\]
	where $Y(t)=Y_1(t)+Y_2(t)+Y_3(t)$. As \cref{eq:y-Sigma} implies $G(y(t))\geq\Sigma(t)$ and $G(\cdot)$ is monotone decreasing, we conclude that 
	\[
	y(t)\leq Y(t)\leq 	\exp\left(-\frac{\Sigma(t)}{2\sqrt{P}}\right)
	+\frac{36Q}{\Sigma^2(t)}
	+\frac{6R}{\Sigma(t)},
	\]
	which leads to \cref{eq:est-yt-case2} and completes the proof.
\end{proof}

\section{Decay Estimates of Some Difference Equations}
\label{app:yk}
\begin{lem}
	\label{lem:est-yk-case1}
	Let $\{\theta_k\}_{k=0}^\infty$ be a positive real sequence such that
	\begin{equation}\label{eq:tk-case1}
		\theta_{k+1}-\theta_k \leq -\sigma\theta^{\nu}_{k}\theta_{k+1},\quad \theta_0=1,
	\end{equation}
	where $\sigma,\,\nu>0$. If $\theta_{k+1}/\theta_{k}\geq \tau>0$ for all $k\in\mathbb N$, then
	\begin{equation}\label{eq:est-yk-case1}
		\theta_k\leq  \left(1+\sigma\tau\nu k\right)^{-1/\nu}\quad\forall\,k\in\mathbb N.
	\end{equation}
\end{lem}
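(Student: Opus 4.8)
The plan is to convert the multiplicative recursion \cref{eq:tk-case1} into an additive lower bound on the reciprocal power $\theta_k^{-\nu}$, which then telescopes to give \cref{eq:est-yk-case1}. First I would observe that the right-hand side of \cref{eq:tk-case1} is nonpositive, so $\{\theta_k\}$ is nonincreasing; in particular $0<\theta_{k+1}\le\theta_k$ for every $k$, which legitimizes all the manipulations of reciprocals and powers below. I would also rewrite \cref{eq:tk-case1} in the form $\theta_k-\theta_{k+1}\ge \sigma\theta_k^{\nu}\theta_{k+1}$, which is the shape I will feed into the main estimate.

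Next I would study the increment $\theta_{k+1}^{-\nu}-\theta_k^{-\nu}$. Since $x\mapsto x^{-\nu}$ is convex on $(0,\infty)$, the tangent-line (subgradient) inequality at the point $\theta_k$ gives $\theta_{k+1}^{-\nu}-\theta_k^{-\nu}\ge \nu\,\theta_k^{-\nu-1}(\theta_k-\theta_{k+1})$. Substituting the rearranged recursion $\theta_k-\theta_{k+1}\ge \sigma\theta_k^{\nu}\theta_{k+1}$, the powers of $\theta_k$ collapse: the factor $\theta_k^{\nu}$ cancels against $\theta_k^{-\nu-1}$, leaving $\theta_{k+1}^{-\nu}-\theta_k^{-\nu}\ge \nu\sigma\,\theta_{k+1}/\theta_k$. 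At this point the standing hypothesis $\theta_{k+1}/\theta_k\ge\tau$ yields the uniform, index-independent bound $\theta_{k+1}^{-\nu}-\theta_k^{-\nu}\ge \nu\sigma\tau$ for every $k$.

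Finally I would sum this telescoping inequality from $0$ to $k-1$ and invoke $\theta_0=1$ to obtain $\theta_k^{-\nu}\ge 1+\nu\sigma\tau k$, which is exactly \cref{eq:est-yk-case1} after raising both sides to the power $-1/\nu$ (admissible since both sides are positive and $t\mapsto t^{-1/\nu}$ is decreasing). I expect the only delicate point to be the convexity step: one must choose precisely the tangent-line inequality for $x^{-\nu}$ evaluated at the \emph{larger} endpoint $\theta_k$ (equivalently, apply the mean value theorem to $x^{-\nu}$ and bound the derivative at $\theta_k$), because it is this choice that produces the factor $\theta_k^{-\nu-1}$ needed to cancel the $\theta_k^{\nu}$ coming from the recursion and leaves behind only the ratio $\theta_{k+1}/\theta_k$, which the hypothesis controls by $\tau$. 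Everything else is a routine telescoping computation.
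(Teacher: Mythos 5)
Your proof is correct, but it takes a genuinely different route from the paper's. The paper constructs a piecewise linear interpolation $y(t):=\theta_k(k+1-t)+\theta_{k+1}(t-k)$ on $[k,k+1)$, shows via the hypotheses $\theta_{k+1}\leq y(t)\leq\theta_k$ and $\theta_{k+1}/\theta_k\geq\tau$ that $y$ satisfies the differential inequality $y'(t)\leq-\sigma\tau y^{1+\nu}(t)$, solves that inequality to get $y(t)\leq(1+\sigma\tau\nu t)^{-1/\nu}$, and concludes from $\theta_k=y(k)$. You instead stay entirely in discrete time: the convexity (tangent-line) inequality for $x\mapsto x^{-\nu}$ at the larger endpoint $\theta_k$ gives $\theta_{k+1}^{-\nu}-\theta_k^{-\nu}\geq\nu\theta_k^{-\nu-1}(\theta_k-\theta_{k+1})\geq\nu\sigma\,\theta_{k+1}/\theta_k\geq\nu\sigma\tau$, and telescoping yields $\theta_k^{-\nu}\geq 1+\nu\sigma\tau k$. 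The two arguments are morally the same estimate --- both track the linear growth of $\theta^{-\nu}$ --- and your observation about why the tangent must be taken at $\theta_k$ is exactly the point where the discrete setting needs convexity, while in continuous time the chain rule makes the corresponding step an identity. What your version buys is self-containedness and elementarity: no Sobolev-space framing, no auxiliary interpolant, just algebra and a telescoping sum. What the paper's version buys is reusability within its unified framework: the same interpolation device converts Lemmas \ref{lem:est-yt-case1} and \ref{lem:est-yt-case2} into the discrete Lemmas \ref{lem:est-yk-case2} and \ref{lem:est-yk-case3}, where the continuous results are quoted wholesale rather than re-derived; a purely discrete treatment would have to repeat a (more delicate) convexity computation for each of those cases.
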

\begin{proof}
	Define a piece-wise continuous linear function $y:[0,\infty)\to (0,\infty)$ by that
	\begin{equation}\label{eq:aux-yt}
		y(t): = \theta_{k}(k+1-t)+\theta_{k+1}(t-k),
		\quad\,t\in[k,k+1)\quad\forall\, k\in\mathbb N.
	\end{equation}
	Clearly, $y\in W^{1,\infty}(0,\infty)$ is decreasing and $y(0) = 1$. In addition, we have
	\begin{equation}\label{eq:tk-yt}
		\theta_{k+1}\leq y(t)\leq \theta_{k}\quad \text{and}\quad
		\frac{\theta_{k+1}}{y(t)}\geq \frac{\theta_{k+1}}{\theta_k}\geq \tau\quad\forall\,t\in[k,k+1].
	\end{equation}
	According to \cref{eq:tk-case1}, we obtain
	\[
	y'(t) \leq -\sigma\tau  y^{1+\nu}(t)\quad\Longrightarrow\quad 
	y(t)\leq  \left(1+\sigma\tau\nu t\right)^{-1/\nu}.
	\]
	Hence, \cref{eq:est-yk-case1} follows immediately from this estimate and the fact $\theta_k = y(k)$.
\end{proof}

We then apply \cref{lem:est-yt-case1,lem:est-yt-case2} to obtain the optimal decay rates of  two difference equations.
\begin{lem}
	\label{lem:est-yk-case2}
	Let $\{\theta_k\}_{k=0}^\infty$ be a positive real sequence such that
	\begin{equation}\label{eq:tk-case2}
		\theta_{k+1}-\theta_k \leq  -\frac{\sigma\theta^{\nu}_{k}\theta_{k+1}}{\sqrt{Q\theta_{k}+R^2}},\quad \theta_0=1,
	\end{equation}
	where $\sigma,Q>0,\,R\geq 0 $ and $\nu\geq 1/2$. If  $\theta_{k+1}/\theta_{k}\geq \tau>0$ for $k\in\mathbb N$, then we have
	\begin{equation}\label{eq:est-yk-case2}
		\theta_k
		\leq C_\nu\left\{
		\begin{aligned}
			{}&	\left(\frac{\sqrt{Q}}{\sigma\tau k}\right)^{\frac{2}{2\nu-1}}+
			\left(\frac{R}{\sigma\tau k}\right)^{\frac{1}{\nu}}			,&&\text{if }\nu>1/2,\\		
			{}&\exp\left(-\frac{\sigma\tau k}{2\sqrt{Q}} \right)+
			\left(		\frac{R}{\sigma\tau k}\right)^2,&&\text{if }\nu=1/2,
		\end{aligned}
		\right.
	\end{equation}
	for $k\geq 1$, where $C_\nu>0$ depends only on $\nu$.
\end{lem}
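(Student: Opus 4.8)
The plan is to mimic the proof of \cref{lem:est-yk-case1}: interpolate the sequence $\{\theta_k\}$ by a continuous, piecewise-linear function and reduce the difference inequality \cref{eq:tk-case2} to a differential inequality covered by the continuous estimate \cref{lem:est-yt-case1}. Concretely, I would set $y(t):=\theta_k(k+1-t)+\theta_{k+1}(t-k)$ for $t\in[k,k+1)$, so that $y\in W^{1,\infty}(0,\infty)$ is positive and decreasing with $y(0)=1$ and $y(k)=\theta_k$. On each subinterval one has $y'(t)=\theta_{k+1}-\theta_k$ together with the sandwich bounds $\theta_{k+1}\le y(t)\le\theta_k$.

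The next step is to convert \cref{eq:tk-case2} into an autonomous differential inequality for $y$. The only genuine difficulty is that the right-hand side of \cref{eq:tk-case2} is expressed through the \emph{left} endpoint $\theta_k$, whereas monotonicity only yields $\theta_k\ge y(t)$ --- the wrong direction for bounding the denominator $\sqrt{Q\theta_k+R^2}$ from above. This is exactly where the hypothesis $\theta_{k+1}/\theta_k\ge\tau$ enters: since the sequence is decreasing one has $\tau\le 1$, and $\theta_k\le\theta_{k+1}/\tau\le y(t)/\tau$, so $\theta_k$ can be controlled by $y(t)$ from both sides. Using $\theta_k^{\nu}\ge y^{\nu}(t)$ and $\theta_{k+1}\ge\tau\theta_k\ge\tau y(t)$ in the numerator, $\theta_k\le y(t)/\tau$ in the denominator, and the sub-additivity $\sqrt{a+b}\le\sqrt{a}+\sqrt{b}$ exploited in the proof of \cref{lem:est-yt-case2}, I expect to reach an inequality of the form
\[
y'(t)\le -\frac{\sigma_c\,y^{\nu+1}(t)}{\sqrt{Q_c\,y(t)+R_c^2}},
\]
with $\sigma_c\simeq\sigma\tau$, $Q_c\simeq Q$, $R_c\simeq R$, the residual powers of $\tau$ being absorbed into the generic constant $C_\nu$.

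The decisive observation is that the extra factor $\theta_{k+1}$ raises the leading exponent from $\nu$ to $\nu+1$. Hence the resulting inequality is precisely the one handled by \cref{lem:est-yt-case1} with effective exponent $\nu_c=\nu+1\ge 3/2$ and $P=0$. Applying that lemma with constant forcing $\sigma(t)\equiv\sigma_c$, so that $\Sigma(t)=\sigma_c t$ and $\Sigma(k)=\sigma_c k$, and then using $\theta_k=y(k)$, splits into the two regimes of \cref{eq:est-yk-case2}: the generic case $\nu_c>3/2$, i.e. $\nu>1/2$, produces the algebraic rate with exponents $2/(2\nu_c-3)=2/(2\nu-1)$ and $1/(\nu_c-1)=1/\nu$, while the borderline $\nu_c=3/2$, i.e. $\nu=1/2$, produces the exponential-plus-$R$ rate.

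The main obstacle is the direction mismatch in the denominator flagged above: unlike the clean setting of \cref{lem:est-yk-case1}, here $\theta_k$ sits inside the square root, and without the ratio bound $\theta_{k+1}/\theta_k\ge\tau$ there is no way to replace it by the interpolant $y(t)$. Everything else --- the sandwich estimates, the power shift $\nu\mapsto\nu+1$, and the bookkeeping of constants into $C_\nu$ --- is routine once the differential inequality is in the canonical form of \cref{lem:est-yt-case1}. As a sanity check, a fully discrete alternative is to divide \cref{eq:tk-case2} by $\theta_k\theta_{k+1}$ and telescope the increments of $1/\theta_k$, which recovers the same rates; I would nevertheless keep the interpolation argument for uniformity with the continuous estimates of \cref{app:yt}.
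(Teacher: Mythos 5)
Your overall strategy coincides with the paper's own proof: piecewise-linear interpolation of $\{\theta_k\}$, conversion of \cref{eq:tk-case2} into a differential inequality of the type covered by \cref{lem:est-yt-case1} with shifted exponent $\nu+1\geq 3/2$, and the exponent bookkeeping $2/(2(\nu+1)-3)=2/(2\nu-1)$, $1/((\nu+1)-1)=1/\nu$, borderline $\nu+1=3/2\Leftrightarrow\nu=1/2$ is all correct. However, there is a genuine flaw at exactly the step you flag as the main obstacle. Bounding the denominator via $\theta_k\leq y(t)/\tau$ replaces $Q$ by $Q/\tau$, so after invoking \cref{lem:est-yt-case1} the leading term for $\nu>1/2$ becomes of order $\bigl(\sqrt{Q/\tau}/(\sigma\tau k)\bigr)^{2/(2\nu-1)}$, which exceeds the claimed bound by the factor $\tau^{-1/(2\nu-1)}$, and for $\nu=1/2$ the exponential rate degrades from $\exp\bigl(-\sigma\tau k/(2\sqrt{Q})\bigr)$ to $\exp\bigl(-\sigma\tau^{3/2} k/(2\sqrt{Q})\bigr)$. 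These residual powers of $\tau$ cannot be absorbed into $C_\nu$: the lemma explicitly requires $C_\nu$ to depend only on $\nu$, while $\tau$ is a free parameter that may be arbitrarily small. As written, your argument therefore proves a strictly weaker estimate than \cref{eq:est-yk-case2}.

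The repair, which is what the paper does implicitly to reach \cref{eq:aux-y'} with $Q$ and $R$ untouched, is to use the ratio hypothesis only in the numerator and to treat numerator and denominator as a single monotone function of $\theta_k$. From $\theta_{k+1}\geq\tau\theta_k$,
\[
\frac{\sigma\theta_k^{\nu}\theta_{k+1}}{\sqrt{Q\theta_k+R^2}}
\;\geq\;
\sigma\tau\,\frac{\theta_k^{1+\nu}}{\sqrt{Q\theta_k+R^2}}
\;\geq\;
\sigma\tau\,\frac{y^{1+\nu}(t)}{\sqrt{Qy(t)+R^2}},
\]
where the second inequality uses $\theta_k\geq y(t)$ together with the fact that $w\mapsto w^{1+\nu}/\sqrt{Qw+R^2}$ is increasing on $(0,\infty)$ (its derivative has the sign of $(\nu+\tfrac12)Qw+(1+\nu)R^2>0$). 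So $\theta_k\geq y(t)$ turns out to be the \emph{right} direction after all, once the whole quotient is viewed as one increasing function; the hypothesis $\theta_{k+1}/\theta_k\geq\tau$ is needed only to trade $\theta_{k+1}$ for $\tau\theta_k$, not to control the denominator. With this correction your derivation yields exactly \cref{eq:aux-y'}, and the remainder of your reduction to \cref{lem:est-yt-case1} goes through verbatim with the stated constants.
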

\begin{proof}
	Again, we use the piece-wise continuous linear interpolation $y(t)$ defined by \cref{eq:aux-yt}. In view of \cref{eq:tk-yt,eq:tk-case2}, we find
	\begin{equation}\label{eq:aux-y'}
		y'(t)\leq -\frac{\sigma\tau y^{1+\nu}(t)}{\sqrt{Q y(t)+R^2}},
	\end{equation}
	and invoking  \cref{lem:est-yt-case1} proves \cref{eq:est-yk-case2}.
\end{proof}

\begin{lem}
	\label{lem:est-yk-case3}
	Let $\{\theta_k\}_{k=0}^\infty$ be a positive real sequence such that
	\[
	\theta_{k+1}-\theta_k \leq  -\frac{\sigma\theta_{k}\theta_{k+1}}{\sqrt{P\theta_k^2+Q\theta_{k}+R^2}},\quad \theta_0=1,
	\]
	where $\sigma,P>0$ and $Q,R\geq 0 $. If $\theta_{k+1}/\theta_{k}\geq \tau>0$ for all $k\in\mathbb N$, then we have 
	\begin{equation}\label{eq:est-yk-case3}
		\theta_k\leq 
		\exp\left(-\frac{\sigma \tau k}{2\sqrt{P}} \right)
		+\frac{36 Q}{\sigma^2\tau^2k^2}
		+\frac{6R}{\sigma\tau k}\quad\forall\,k\geq 1.
	\end{equation}
\end{lem}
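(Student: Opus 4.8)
The plan is to mimic the proof of the continuous estimate in \cref{lem:est-yt-case2}, transferred to the discrete setting via the piece-wise linear interpolation already used in \cref{lem:est-yk-case1,lem:est-yk-case2}. First I would define the interpolant $y(t)$ exactly as in \cref{eq:aux-yt}, so that $y\in W^{1,\infty}(0,\infty)$ is decreasing with $y(0)=1$ and satisfies $y(k)=\theta_k$. The monotonicity bounds \cref{eq:tk-yt} then give $\theta_{k+1}\le y(t)\le \theta_k$ and $\theta_{k+1}/y(t)\ge \tau$ on each interval $[k,k+1]$, which is the bridge between the difference inequality and a differential inequality for $y$.

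Next I would convert the hypothesis into a differential inequality of the form required by \cref{lem:est-yt-case2}. On $[k,k+1)$ we have $y'(t)=\theta_{k+1}-\theta_k$, and using the given recursion together with the bounds above, the factor $\theta_k\theta_{k+1}$ becomes $\ge \tau\, y^2(t)$ while the denominator $\sqrt{P\theta_k^2+Q\theta_k+R^2}$ is controlled by $\sqrt{P y^2(t)+Q y(t)+R^2}$ (since $y(t)\le\theta_k$ makes the radicand no larger). Assembling these yields
\[
y'(t)\le -\frac{\sigma\tau\, y^{2}(t)}{\sqrt{P y^2(t)+Q y(t)+R^2}},
\]
which is precisely \cref{eq:yt} with $\nu=2$ and rescaled weight $\sigma(t)\equiv\sigma\tau$.

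I would then invoke \cref{lem:est-yt-case2} directly. Here $\Sigma(t)=\int_0^t \sigma\tau\,\dd s=\sigma\tau t$, so the conclusion \cref{eq:est-yt-case2} reads
\[
y(t)\le \exp\!\left(-\frac{\sigma\tau t}{2\sqrt{P}}\right)+\frac{36 Q}{\sigma^2\tau^2 t^2}+\frac{6R}{\sigma\tau t}.
\]
Evaluating at $t=k$ and using $\theta_k=y(k)$ yields \cref{eq:est-yk-case3}, completing the proof.

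The main obstacle is ensuring that every estimate in the second step points in the correct direction simultaneously: the numerator must be bounded below (using $\theta_k\theta_{k+1}\ge \tau y^2$) while the denominator must be bounded above (using $y(t)\le \theta_k$ so that $P\theta_k^2+Q\theta_k+R^2\ge P y^2+Q y+R^2$). These two monotonicity uses are compatible, but one must check that the interpolation inequalities \cref{eq:tk-yt} are applied on the right side of each bound; this is the delicate bookkeeping, though it is entirely routine once set up. Everything else is a direct citation of the continuous lemma, so no new analytic ingredient is needed beyond what \cref{lem:est-yt-case2} already supplies.
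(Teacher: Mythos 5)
Your overall strategy is exactly the paper's: interpolate via \cref{eq:aux-yt}, derive the differential inequality $y'(t)\leq -\sigma\tau y^{2}(t)/\sqrt{Py^{2}(t)+Qy(t)+R^{2}}$ on each interval, and then cite \cref{lem:est-yt-case2} with $\sigma(t)\equiv\sigma\tau$, $\Sigma(t)=\sigma\tau t$, evaluating at $t=k$. However, the justification you give for the key step fails, and it fails precisely at the point you label as the delicate bookkeeping. Since $y'(t)\leq-\sigma F_k$ with
\[
F_k=\frac{\theta_k\theta_{k+1}}{\sqrt{P\theta_k^2+Q\theta_k+R^2}},
\]
what you need is a \emph{lower} bound on the positive quantity $F_k$, i.e.\ a lower bound on its numerator together with an \emph{upper} bound on its denominator in terms of $y(t)$. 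Your numerator estimate $\theta_k\theta_{k+1}\geq\tau y^2(t)$ is fine, but your denominator comparison, namely $P\theta_k^2+Q\theta_k+R^2\geq Py^2(t)+Qy(t)+R^2$ (which is what $y(t)\leq\theta_k$ gives), is a lower bound on that denominator and therefore points the wrong way: combining your two estimates yields only $F_k\geq \tau y^2(t)/\sqrt{P\theta_k^2+Q\theta_k+R^2}$, which is \emph{weaker} than the required $F_k\geq \tau y^2(t)/\sqrt{Py^2(t)+Qy(t)+R^2}$, not stronger. So the two monotonicity uses you describe are not compatible, and the argument as written does not produce \cref{eq:yt} with $\nu=2$.

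The repair is to avoid splitting numerator and denominator and instead pair one factor $\theta_k$ with the radicand: the map
\[
\psi(s):=\frac{s}{\sqrt{Ps^2+Qs+R^2}}=\Bigl(P+\tfrac{Q}{s}+\tfrac{R^2}{s^2}\Bigr)^{-1/2}
\]
is nondecreasing on $(0,\infty)$ because $Q/s+R^2/s^2$ is nonincreasing. Hence $y(t)\leq\theta_k$ gives $\psi(\theta_k)\geq\psi(y(t))$, and together with $\theta_{k+1}\geq\tau\theta_k\geq\tau y(t)$ one gets
\[
F_k=\theta_{k+1}\,\psi(\theta_k)\geq \tau\, y(t)\,\psi(y(t))=\frac{\tau y^{2}(t)}{\sqrt{Py^{2}(t)+Qy(t)+R^{2}}},
\]
which is the desired differential inequality; the rest of your argument (citing \cref{eq:est-yt-case2} and using $\theta_k=y(k)$) then goes through and reproduces \cref{eq:est-yk-case3}. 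Note that the same monotonicity device, for $s\mapsto s^{\nu}/\sqrt{Qs+R^2}$ with $\nu\geq 1/2$, is what is silently used in the paper's proof of \cref{lem:est-yk-case2}; your proposal inherits the gap because it makes that hidden step explicit but in the wrong direction.
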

\begin{proof}
	Similarly with \cref{eq:aux-y'}, it is not hard to get
	\[
	y'(t)\leq -\frac{\sigma\tau y^{2}(t)}{\sqrt{P y^2(t)+ Q y(t)+R^2}}.
	\]
	Applying \cref{lem:est-yt-case2} gives \cref{eq:est-yk-case3} and completes the proof.	
\end{proof}


\bibliographystyle{abbrv}


\end{document}